\newcommand{\Z}{\mathbb{Z}}
\newtheorem{theorem}{Theorem}[]
\newtheorem*{theorem*}{Theorem}
\newtheorem{corollary}{Corollary}[theorem]
\newtheorem{lemma}[theorem]{Lemma}
\newtheorem{observation}[theorem]{Observation}
\newtheorem{conjecture}[theorem]{Conjecture}
\newtheorem{proposition}[theorem]{Proposition}
\newtheorem{definition}[theorem]{Definition}
\newcommand{\kat}[1]{{\textcolor{blue}{#1}}}
\title{Nowhere-zero 8-flows in 3-edge-connected signed graphs}
\author{Matt DeVos \qquad Kathryn Nurse \quad Robert \v S\'amal}
\author{
   Matt DeVos\thanks{
     Email: {\tt mdevos@sfu.ca}. 
     Supported by an NSERC Discovery Grant (Canada).}
 \and
   Kathryn Nurse\thanks{
  Email: {\tt knurse@sfu.ca}.
  Supported by an NSERC Vanier Scholarship (Canada).}
  \and
    Robert \v S\'amal\thanks{
    Email: {\tt samal@iuuk.mff.cuni.cz}.     
    Supported by grant 22-17398S of the Czech Science Foundation. 
    }
}
\begin{document}
\maketitle

\begin{abstract}
    {
    
    In 1983, A. Bouchet extended W.T. Tutte's notion of nowhere-zero flows to signed graphs, and conjectured that every flow-admissible signed graph has a nowhere-zero 6-flow. In this paper we prove that every flow-admissible signed graph that is 3-edge-connected has a nowhere-zero 8-flow. This is a continuation of a previous paper where we proved the same conclusion under stronger assumptions.}
\end{abstract}




\section{Introduction}

Motivated by proper colourings of planar graphs, W.T. Tutte introduced the notion of a nowhere-zero flow in the 1950's. He made three profound flow conjectures: Tutte's 3-flow conjecture (1972) is a generalization of Grotzsch's theorem that triangle-free planar graph are 3-colourable. It was proved to hold for 6-edge-connected graphs (2013) by Lov\'asz, Thomassen, Wu, and Zhang \cite{LTWZ}. Tutte's 4-flow conjecture (1966) is a generalization of the 4-colour theorem. It was shown (1997-2016), and widely accepted, although not completely published, that the 4-flow conjecture holds for cubic graphs \cite{RobertsonNeil1997TEC, EDWARDS201666}.     Tutte's 5-flow conjecture (1954) is generally considered the most important of these conjectures. 

\begin{conjecture}[Tutte]\label{5fl}
    Every flow-admissible graph has a nowhere-zero 5-flow.
\end{conjecture}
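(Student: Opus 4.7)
Conjecture~\ref{5fl} is Tutte's celebrated $5$-flow conjecture from 1954, which remains open after seven decades and is widely considered the central unresolved problem in the theory of nowhere-zero flows. The best general results to date are Jaeger's $8$-flow theorem and Seymour's $6$-flow theorem. Rather than a genuine proof plan, what follows is the natural strategy one would pursue, together with the step at which it stalls.

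The plan is to argue by minimal counterexample. Let $G$ be a flow-admissible graph with no nowhere-zero $5$-flow, minimizing $|V(G)|+|E(G)|$. Standard reductions --- contracting non-loop edges, absorbing parallel edges, suppressing degree-two vertices, and splitting at small edge cuts --- force $G$ to be simple, cubic, $3$-edge-connected, and in fact cyclically $4$-edge-connected; known refinements due to Kochol and others push the cyclic edge-connectivity and girth considerably higher. So $G$ must be a ``reduced'' snark, and the conjecture reduces to showing that every such snark admits a nowhere-zero $\mathbb{Z}_5$-flow.

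The next step is to exploit the structure that a $\mathbb{Z}_5$-flow imposes on a cubic graph. A nowhere-zero $\mathbb{Z}_5$-flow $\varphi$ partitions $E(G)$ into $\varphi^{-1}(\{\pm 1\})$ and $\varphi^{-1}(\{\pm 2\})$, each of which is a \emph{parity subgraph} (at every vertex the flow values split as $1+2$, not $3+0$). Conversely, finding two such parity subgraphs whose union is $E(G)$ together with an orientation-compatibility condition is equivalent to the existence of the desired flow. One would then try to build such a pair by refining Seymour's decomposition: start from a $\mathbb{Z}_2$-flow $f_2$ supported on an even subgraph and a $\mathbb{Z}_3$-flow $f_3$ whose support covers the odd edges, and then perturb $f_3$ along directed cycles of its support so that the resulting $\mathbb{Z}_6 \cong \mathbb{Z}_2 \oplus \mathbb{Z}_3$ flow never takes the value $3$.

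The main obstacle is precisely this final perturbation: controlling \emph{where} a $\mathbb{Z}_3$-flow takes a prescribed value is notoriously rigid, and it is exactly this flexibility that Seymour's proof of the $6$-flow theorem leaves on the table. Without a genuinely new structural handle on cyclically highly edge-connected cubic graphs --- for instance a sharper understanding of the perfect-matching lattice, a Petersen-minor type excluded structure, or a reducible configuration argument ruling out all but finitely many bad blocks --- I would expect this plan to fail, as every prior attempt has. Consequently, my realistic goal would not be the full conjecture but rather the program the present paper fits into: establish nowhere-zero $k$-flow theorems for small $k$ under added connectivity or structural hypotheses, and hope that iterating such results eventually closes the gap.
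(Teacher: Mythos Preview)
Your assessment is accurate: Conjecture~\ref{5fl} is stated in the paper purely as background and motivation, and the paper makes no attempt to prove it --- nor could it, since Tutte's 5-flow conjecture remains open. There is therefore no ``paper's own proof'' to compare against. Your write-up correctly identifies this, correctly recalls Seymour's 6-flow theorem as the current best bound, and honestly flags the point at which the standard minimal-counterexample approach stalls (the rigidity of prescribing values in a $\mathbb{Z}_3$-flow). That is exactly the right thing to say about a famous open problem; no correction is needed.
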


Seymour proved that every flow-admissible graph has a nowhere-zero 6-flow \cite{Seymour6flow} in 1981. Two years later, Bouchet noticed that Tutte's flow-colouring duality extends to general surfaces. Interestingly, however, this requires flows on signed graphs. Bouchet made the following conjecture, which parallels Tutte's 5-flow conjecture in the broader setting of signed graphs \cite{Bouchet}.

\begin{conjecture}[Bouchet]\label{bouchetConj}
    Every flow-admissible signed graph has a nowhere-zero 6-flow.
\end{conjecture}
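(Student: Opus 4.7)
My plan is to mimic Seymour's proof of the $6$-flow theorem for ordinary graphs, exploiting the factorization $\Z_6 \cong \Z_2 \times \Z_3$. Concretely, I would try to produce on a flow-admissible signed graph $(G,\sigma)$ a pair $(\varphi_2, \varphi_3)$, where $\varphi_2$ is a $\Z_2$-flow, $\varphi_3$ is a $\Z_3$-flow, and $\supp(\varphi_2) \cup \supp(\varphi_3) = E(G)$; combining these gives a nowhere-zero $\Z_6$-flow, which is equivalent to a nowhere-zero integer $6$-flow on signed graphs just as for ordinary graphs.

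\textbf{Step 1: Reductions.} First I would discard inessential structure. Using switching, assume the negative signature is minimal. Bridges and $1$-separations in an unsigned sense can be handled by induction in the usual way; the genuinely new feature is the possibility of ``long barbells'' (two negative cycles joined by a path), since these are the minimal objects supporting a non-trivial flow. I would aim to reduce to $(G,\sigma)$ in which no such barbell is separated by a $1$-edge-cut, so that every $2$-edge-cut behaves essentially as in the unsigned case.

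\textbf{Step 2: Constructing the $\Z_3$-flow.} Following Seymour, I would build a spanning connected subgraph $H \subseteq G$ together with a nowhere-zero $\Z_3$-flow $\varphi_3$ supported on $H$, by growing $H$ one ``Seymour bridge'' at a time. The bridges must be chosen so that each one, when contracted, leaves enough cyclic slack to admit a $\Z_3$-flow; in the signed setting the base ears are \emph{signed ears}, which are either positive cycles, balanced theta-subgraphs, or barbells. A signed ear decomposition of $(G,\sigma)$ exists whenever $G$ is $2$-edge-connected and $(G,\sigma)$ is flow-admissible, and I would use it to inductively extend $\varphi_3$ one ear at a time, using the fact that a $\Z_3$-flow on a path-like structure has enough degrees of freedom to avoid any prescribed value.

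\textbf{Step 3: The $\Z_2$-flow on the remainder.} On the complement $E(G) \setminus \supp(\varphi_3)$ I need a $\Z_2$-flow whose support covers every edge not handled by $\varphi_3$. On ordinary graphs this is a $T$-join argument; on signed graphs one needs a \emph{signed $T$-join}, whose existence is governed by a parity condition at each vertex that mixes edge parities with signs of incident negative edges. Flow-admissibility of $(G,\sigma)$ is precisely the global parity requirement needed to make this problem solvable, so the task reduces to showing that the local parities can be matched given the support constraints from Step 2.

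\textbf{Main obstacle.} The hardest step, and the reason why the $3$-edge-connected case of the present paper only yields an $8$-flow rather than a $6$-flow, is \emph{Step 2}. Seymour's construction crucially uses that every added ear in an ordinary $2$-edge-connected graph is an honest path; in the signed setting a \emph{barbell} can also be an ear, and extending a $\Z_3$-flow across a barbell demands that the two attachment points carry compatible $\Z_3$-values, which need not hold. Circumventing this barbell obstruction without strengthening the connectivity hypothesis, or else showing that barbells can always be scheduled late in the ear order so that Step 3 absorbs their defect, is precisely the gap between the present paper's theorem and Conjecture~\ref{bouchetConj}, and is where I expect the entire difficulty of the proof to be concentrated.
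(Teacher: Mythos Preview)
The statement you address is Bouchet's Conjecture, an \emph{open problem}; the paper does not prove it and makes no such claim. Its main result, Theorem~\ref{mainthm}, is the weaker assertion that flow-admissible $3$-edge-connected signed graphs have nowhere-zero $8$-flows, so there is no ``paper's proof'' to compare against. Your proposal is, as you yourself concede in the final paragraph, not a proof: you outline a Seymour-style $\Z_2 \times \Z_3$ strategy and then correctly isolate the barbell obstruction in Step~2 as the unresolved core. That is a fair diagnosis of the difficulty, but it leaves the essential work undone, so what you have written is a research plan rather than a proof attempt.

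Two technical points also need attention even within the sketch. First, the equivalence ``nowhere-zero $\Z_6$-flow $\Leftrightarrow$ nowhere-zero integer $6$-flow'' is Tutte's theorem for ordinary graphs; for signed graphs the passage from group flows to bounded integer flows is genuinely more delicate and cannot simply be asserted. Second, Step~3 is underspecified: flow-admissibility of an unbalanced signed graph is the condition of being $2$-unbalanced, not a local vertex-parity condition, so the claim that it ``is precisely the global parity requirement'' for your signed $T$-join problem requires an actual argument. For comparison, the paper's route to its $8$-flow is structurally quite different from your outline: rather than a signed ear decomposition, it builds an ordered list of vertex-disjoint generalized cycles via an explicit selection algorithm, constructs a $\Z_3$-preflow along that list, lifts it to an integer preflow through a perfect-matching argument in an auxiliary graph, and then adds a $\{0,\pm 1\}$-flow supported on selected cycles to eliminate the remaining zeros.
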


Like the 5-flow conjecture, Bouchet's 6-flow conjecture is best-possible due to (a signed version of) the Petersen graph \cite{Bouchet}. Bouchet proved that his conjecture holds with 6 replaced by 216. This has been further improved by numerous authors and the best current result, published in 2021, is that every flow-admissible signed graph has a nowhere-zero 11-flow \cite{DLLZZ}.

\medskip

The present paper is a continuation of \cite{devos2023nowherezero}, where we proved that every flow-admissible signed graph that is cyclically 5-edge-connected and cubic has a nowhere-zero $8$-flow. Here, we prove the same conclusion under weaker assumptions.

\begin{theorem}\label{mainthm}
Every flow-admissible, 3-edge-connected signed graph has a nowhere-zero 8-flow.
\end{theorem}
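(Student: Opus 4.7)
The plan is to reduce Theorem \ref{mainthm} to the main result of \cite{devos2023nowherezero}, which handles the cyclically 5-edge-connected cubic case. The reduction proceeds in two stages, and both are carried out inductively on the number of edges.

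Stage one is a \emph{cubic expansion}. Given a 3-edge-connected, flow-admissible signed graph $G$, I replace every vertex $v$ of degree $d \geq 4$ by a tree on $d-2$ internal vertices, each of degree $3$, whose leaves are identified with the $d$ neighbours of $v$, and whose new internal edges are all labeled positive. The resulting signed graph $G'$ is cubic and remains 3-edge-connected; since the added edges are positive (and the expansion is internal to a vertex), flow-admissibility is also preserved. A nowhere-zero $\Z_8$-flow on $G'$ restricts to a nowhere-zero $\Z_8$-flow on $G$ (the tree can always be extended given prescribed boundary values that sum to $0$), so it suffices to prove the theorem for cubic graphs.

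Stage two is a \emph{connectivity enhancement}. With $G$ cubic and 3-edge-connected, I induct on $|E(G)|$, handling cyclic $k$-edge cuts for $k \in \{3,4\}$. Given a cyclic 3-edge cut $\{e_1,e_2,e_3\}$ separating $G$ into sides $A$ and $B$, I form $G_A$ and $G_B$ by contracting the opposite side to a single vertex, first using switching at the contracted vertex (or along a tree on the contracted side) to normalize the signs of the cut edges so that each side retains an even number of negative edges if $G$ does. The pieces $G_A$ and $G_B$ are smaller, 3-edge-connected, cubic, and flow-admissible, so by induction each admits a nowhere-zero 8-flow. One then combines these flows by matching values on $e_1,e_2,e_3$: the freedom in $\Z_8$ (or in choosing among several group-valued 8-flows) gives enough room to align both sides, since any three nonzero values in $\Z_8$ summing to $0$ can be realised from either side. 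For a cyclic 4-edge cut, I pair the cut edges and apply an analogous two-piece decomposition, possibly after a short splitting-off argument at the contracted vertex. The base case of the induction -- a cubic, 3-edge-connected signed graph with no cyclic edge cut of size $\leq 4$ -- is cyclically 5-edge-connected, so \cite{devos2023nowherezero} applies directly.

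The main obstacle is the combination step at small cuts in the signed setting. Unlike the unsigned case, the parity of the number of negative edges on each side of a cut controls flow-admissibility of the pieces, and naive contraction can destroy this property by stranding an odd signature on one side. Overcoming this requires a careful switching step before contraction together with a verification that any nowhere-zero 8-flow on $G_A$ can be made compatible with some nowhere-zero 8-flow on $G_B$; the fact that $|\Z_8|$ is comfortably larger than the cut sizes we need to coordinate is what ultimately makes this possible. A secondary technical point is ensuring that the reduced pieces remain flow-admissible and do not degenerate into a graph whose only unbalanced cycle is isolated, but this can be enforced by choosing the cut to be non-trivial on both sides.
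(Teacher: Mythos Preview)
Your plan has a genuine gap in Stage~2, and it is not a technicality.

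First, on Stage~1: an arbitrary tree expansion at a high-degree vertex need not preserve $3$-edge-connectivity or flow-admissibility (equivalently, $2$-unbalancedness). The paper handles this carefully via Proposition~\ref{prop:can uncontract and keep 2-unbal 3ec}, which guarantees that a single uncontraction can be chosen so that both properties survive. Your assertion that the expanded graph ``remains $3$-edge-connected'' and ``flow-admissibility is also preserved'' needs exactly this sort of lemma; without it the reduction is incomplete.

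The serious problem is your gluing step at a cyclic $3$-edge cut. You contract each side to a vertex, hope the resulting signed graphs $G_A$, $G_B$ are smaller instances of the theorem, apply induction, and then match the two flows on the cut edges. Two things fail here. First, when \emph{both} sides of the cut are unbalanced, the contracted graph need not be flow-admissible: contracting an unbalanced side to a single vertex can leave the other piece $1$-unbalanced (a single negative loop suffices), and no switching on the contracted vertex repairs this. Second, and more fundamentally, your claim that ``any three nonzero values in $\Z_8$ summing to $0$ can be realised from either side'' is precisely what is not available for signed graphs. In the unsigned setting this follows from Seymour's $6$-flow theorem together with the deletion--contraction counting argument (this is exactly the content of Lemma~\ref{prespecify_vertex_integer} in the paper). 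For signed graphs there is no analogue: you would need, for every flow-admissible $3$-edge-connected signed graph and every nonzero triple summing to zero, a nowhere-zero $8$-flow realising that triple on three prescribed edges. That statement is strictly stronger than Theorem~\ref{mainthm} itself, so invoking it here is circular. Your $4$-edge cut case inherits the same difficulty.

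This is why the paper's strategy is different in kind, not just in detail. The paper decomposes at a $3$-edge cut \emph{only} when one side is balanced (Lemma~\ref{control3edgecut}); then the balanced side, after contraction, is an ordinary graph where Lemma~\ref{prespecify_vertex_integer} applies, and the unbalanced side is a genuinely smaller signed instance. When no such cut exists, the graph is ``well-behaved'' and the paper builds the $8$-flow directly via the cycle-selection and preflow machinery of Sections~\ref{3ec usable cycle}--\ref{3ecmain}. There is no known shortcut that reduces the well-behaved case to the cyclically $5$-edge-connected case of \cite{devos2023nowherezero}; that is the main new content of the present paper.
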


Throughout, we consider (signed) graphs which are finite, but may have multiple edges or loops. Our methods here are based on those in \cite{devos2023nowherezero}, and we use notation defined there and in \cite{Diestel3rdEdition}. In particular, expanded definitions about signed graphs such as signature, switching, balance, etc., and flows on those graphs can be found in \cite{devos2023nowherezero}. We briefly review them now. 

A \emph {signed graph} is a graph $G$ equipped with a bipartition of its edges $E(G)$ into \emph{negative} and \emph{positive}, which we call a \emph{signature}. An \emph{oriented} signed graph, also called a bidirected graph, is a signed graph equipped with exactly two \emph{directions} for every edge, one assigned to each of its ends (both directions of a loop-edge are assigned to its end). 
This orientation must respect the signature of $G$: A positive edge is oriented so that one of its directions is towards and the other is away. A negative edge is oriented so that both of its directions are the same (both towards or both away). These directions are represented by arrowheads in Figure \ref{fig:orienting signed edges}.

\begin{figure}[h]
    \centering
    \includegraphics[width=0.5\linewidth]{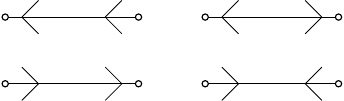}
    \caption{Orientations of signed edges. Positive edges on left, negative edges on right.}
    \label{fig:orienting signed edges}
\end{figure}

We allow two operations on an oriented signed graph $G$: reversing edges, and switching at vertices. To \emph{reverse} an edge $e\in E(G)$ is to reverse both of the directions of $e$ (from away to towards and/or from towards to away). This has no effect on the signature of $G$. To \emph{switch} at a vertex $v\in V(G)$ means to reverse every edge-direction that is assigned to $v$. This changes the signature of $G$, so that positive non-loop edges incident to $v$ become negative, and negative non-loop edges incident to $v$ become positive.

Let $G$ be an oriented signed graph, and $A$ an abelian group. For $v \in V(G)$, denote by $\partial^+(v)$ the set of edges directed away from $v$ and denote by $\partial^-(v)$ the set of edges directed toward $v$. With apologies, we include a negative loop-edge incident to $v$ twice in one of these sets (depending on its orientation).
A function $f: E(G) \to A$ is an $A$-\emph{flow} whenever $\partial f(v) := \sum_{e \in \partial ^+(v)}f(e) - \sum_{e \in \partial^-(v)}f(e) = 0$ at every vertex $v$. An integer-flow $f$ is called a \emph{nowhere-zero $k$-flow} whenever the range of $f$ is in $\{\pm1, \pm2, \dots, \pm(k-1)\}$. 

It follows from the above discussion that reversing an edge preserves the existence of a flow in a signed graph $G$: when $G$ has a flow $f$, if we reverse an edge $e$ and replace $f(e)$ with its inverse, then $f$ remains a flow. 

It also follows that switching at a vertex preserves the existence of a flow. Hence, we say two signed graphs $G$ and $G'$ are \emph{equivalent} if $G'$ can be obtained from $G$ by a sequence of switching operations. 

Therefore, to demonstrate the existence of a nowhere-zero $k$-flow in a signed graph $G$ it suffices to demonstrate a nowhere-zero $k$ flow in any orientation of $G$ and with any equivalent signature of $G$.

A signed graph is \emph{$k$-unbalanced} whenever each of its equivalent signatures has at least $k$ negative edges. A $1$-unbalanced signed graph is simply \emph{unbalanced}.
A signed graph is \emph{flow-admissible} whenever it admits a nowhere-zero integer flow. A 2-edge-connected, unbalanced signed graph is flow-admissible if and only if it is 2-unbalanced \cite{Bouchet}.


\medskip

This paper proceeds as follows. In Section 2, we reduce proving Theorem \ref{mainthm} to proving a result about cubic graphs with an additional connectivity property. After this, Sections 3 to 6 parallel those sections in \cite{devos2023nowherezero}, where we build our list of cycles and establish a preflow. Because of our weaker assumptions, the cycle selection process here is considerably more involved. Once our list of cycles is built, however, with slight modification, we are able to use the machinery already established in \cite{devos2023nowherezero}.
In Section \ref{3ec usable cycle}, we show that we can always find a usable cycle for our process -- this time the reduced connectivity assumptions force us to look in a leaf-block of our graph. In Section \ref{3ecunbaltheta}, we establish Lemma \ref{theBeast} to ensure the number of vertices in our subdivided graph is even (so that we may 
find a perfect matching in our graph). Our weaker connectivity assumptions force us to allow an exception graph which we call \emph{fish}. In Section~\ref{csa}, we describe our cycle-selection process -- this time via an algorithm. Here we are forced to allow weaker connectivity with the positive cycles than we did in \cite{devos2023nowherezero}.
Finally, using the machinery established in the last three sections of \cite{devos2023nowherezero}, we put everything together to prove our main result in Section~\ref{3ecmain}.

\section{Reducing the main theorem}\label{3ecs1}

In this section we show that the main theorem can be reduced to proving a result for cubic graphs with an additional connectivity property.  To achieve this, we will consider a counterexample graph $G$ for which the parameter $3|E(G)| - 4|V(G)|$ is minimum.  Let us note here that for every graph $G$ with minimum degree at least 3 we have 
\[ 3|E(G)| - 4 |V(G)| = \tfrac{3}{2} \sum_{v \in V(G)} \mathrm{deg}(v) - 4 |V(G)| \ge \tfrac{1}{2}|V(G)|, \]
with equality when $G$ is cubic.  In particular, this quantity must always be positive for every 3-edge-connected graph.  



\medskip

We require a proposition about uncontracting edges from \cite{doi:10.1137/23M1615218}.
Suppose $G$ is a signed graph, $v \in V(G)$ with $\deg(v) \geq 4$, and $e,f$ are distinct edges incident to $v$ whose other ends are $v_e, v_f$ respectively (we are allowing $e,f$ to be loops). We \emph{uncontract at $v$ with $\{e,f\}$}\index{uncontract} to get a new signed graph by adding new vertex $v'$, changing the ends of $e,f$ to be $v_ev'$ and $v_fv'$ respectively, and adding a positive edge $vv'$. See Figure \ref{fig:uncontract}. 

\begin{figure}[htb]
    \centering
    \includegraphics[height=2.2cm]{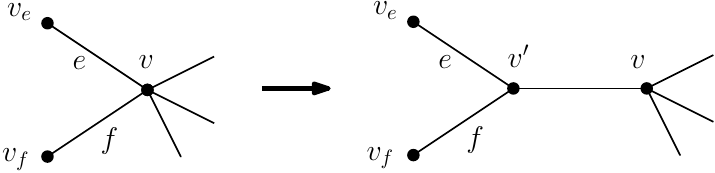}
    \caption{Uncontracting at $v$ with $\{e,f\}$.}
    \label{fig:uncontract}
\end{figure}

\begin{proposition}[\cite{doi:10.1137/23M1615218}]\label{prop:can uncontract and keep 2-unbal 3ec}
Let $G$ be a 2-unbalanced, 3-edge-connected signed graph on at least two vertices. Let $v \in V(G)$ have degree at least four, and $e \in E(G)$ be incident to $v$. Then there is an edge $e'$ incident to $v$ so that the graph $G'$, obtained from $G$ by uncontracting at $v$ with $\{e,e'\}$, is 2-unbalanced and 3-edge-connected.
\end{proposition}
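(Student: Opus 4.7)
My plan is to characterize, for each of the two properties we wish to preserve (3-edge-connectedness and 2-unbalancedness), which choices of $e'$ at $v$ cause the uncontraction to fail, and then to show that the bad choices together cannot exhaust $\partial(v)\setminus\{e\}$.

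For the 3-edge-connectedness side, I would analyze a hypothetical 2-edge-cut $F$ in $G'$ by cases on which side of it contains $v$ and $v'$. Since $G$ is 3-edge-connected, no 2-cut of $G'$ can put $v$ and $v'$ on the same side (contracting $vv'$ would push such a cut down to $G$), so $vv'\in F$. A further case split on whether the other edge of $F$ is incident to $v'$ or not then shows that $F$ corresponds precisely to a 3-edge-cut of $G$ of the form $\{e,e',f\}$ in which $v$ is separated from the far endpoints of $e$ and $e'$. For the 2-unbalanced side, I would track how switching at a set $S\subseteq V(G')$ in $G'$ acts on the signature: the only interesting case is when exactly one of $v,v'$ lies in $S$, where the resulting signature on $E(G)$ differs from the corresponding switch of $S\cap V(G)$ in $G$ only by flipping the signs of $e$ and $e'$ (with an extra negative contributed by $vv'$). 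Counting negatives then shows that $G'$ fails to be 2-unbalanced if and only if $G$ has an equivalent signature whose negative edges are exactly $\{e,e'\}$.

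With these characterizations in hand, the 2-unbalanced bound is clean: if $e_1'$ and $e_2'$ were both bad for 2-unbalancedness, then two equivalent signatures of $G$ would have negative edge sets $\{e,e_1'\}$ and $\{e,e_2'\}$; these differ by an edge-cut $\delta(S)$, which must equal $\{e_1',e_2'\}$. But a 2-edge-cut in a 3-edge-connected graph is impossible, so at most one $e'\in\partial(v)\setminus\{e\}$ is bad for 2-unbalancedness. For the 3-edge-connectedness obstruction, I would invoke submodularity of $|\partial(\cdot)|$: the family of $v$-sides $X$ of bad 3-cuts through $e$ (i.e., $v\in X$, $v_e\notin X$, $|\partial(X)|=3$) is closed under intersection and union, hence forms a sublattice with a canonical minimum element $X^*$. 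Bounding how many edges at $v$ can appear in $\partial(X^*)$ or in the larger bad cuts sitting above $X^*$, while using $\deg(v)\ge 4$ to guarantee at least one edge at $v$ is interior to $X^*$, yields the desired bound.

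The main obstacle is this 3-edge-connectedness count, since unlike in the signature case, the symmetric difference of two bad 3-cuts has size $4$ rather than $2$ and so does not immediately contradict 3-edge-connectedness. Overcoming this requires exploiting the geometric constraint that all bad $v$-sides contain $v$ and exclude $v_e$, together with the degree hypothesis, to control the way the cuts can overlap at $v$. Once the 3-edge-connectedness bad count is pinned down, combining it with the 2-unbalanced bound of one shows that strictly fewer than $\deg(v)-1$ of the candidate edges are bad, so some $e'\in\partial(v)\setminus\{e\}$ is good for both properties.
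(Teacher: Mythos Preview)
The paper does not prove this proposition at all: it is imported verbatim from an external reference and used as a black box. So there is no proof here to compare against, and I evaluate your argument on its own terms.

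Your two characterizations are correct. On the 2-unbalancedness side, uncontracting with $\{e,e'\}$ fails precisely when $G$ has an equivalent signature whose negative set is exactly $\{e,e'\}$, and then two distinct bad choices $e'_1,e'_2$ would force $\{e'_1,e'_2\}$ to be an edge-cut of $G$, impossible by 3-edge-connectivity; so at most one candidate is bad here. On the 3-edge-connectedness side, submodularity does give a lattice $\mathcal X=\{X : v\in X,\ v_e\notin X,\ d(X)=3\}$ with minimum $X^*$, and (since $X^*\subseteq X$ for all $X\in\mathcal X$) a candidate $e'$ is bad iff $v_{e'}\notin X^*$, i.e.\ iff $e'\in\delta(X^*)$; hence at most two candidates are bad here.

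The genuine gap is the final count. Two plus one is three, but $\deg(v)-1$ may also equal three, so for $\deg(v)=4$ you cannot yet conclude that some candidate is good for both. Your stated device---``$\deg(v)\ge4$ guarantees at least one edge at $v$ interior to $X^*$''---yields only \emph{one} candidate good for 3-edge-connectedness, and that one may coincide with the unique candidate bad for 2-unbalancedness. What closes the gap is a \emph{second} application of 3-edge-connectivity, this time to the set $X^*\setminus\{v\}$ (nonempty because $d(\{v\})=\deg(v)>3$ rules out $X^*=\{v\}$). Writing $p,q$ for the number of candidates $e'$ with $v_{e'}\in X^*$ and $v_{e'}\notin X^*$ respectively, and $r$ for the number of edges of $\delta(X^*)$ not incident with $v$, one has $p+q=\deg(v)-1$, $1+q+r=d(X^*)=3$, and $p+r=d(X^*\setminus\{v\})\ge 3$. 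Subtracting the second relation from the third gives $p\ge q+1$, hence $p\ge\lceil\deg(v)/2\rceil\ge 2$. With at least two candidates good for 3-edge-connectedness, discarding the at most one that is bad for 2-unbalancedness still leaves a valid $e'$. Without this step your outline does not go through at $\deg(v)=4$.
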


\begin{lemma}\label{getcubic}
    If $G$ is a counterexample to Theorem \ref{mainthm} for which $3|E(G)| - 4|V((G)|$ is minimum, then $G$ is cubic.
\end{lemma}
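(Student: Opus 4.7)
The plan is to argue by contradiction: suppose $G$ is a minimum counterexample that is not cubic, so some $v \in V(G)$ has $\deg(v) \ge 4$. The goal is to use Proposition~\ref{prop:can uncontract and keep 2-unbal 3ec} to build a signed graph $G'$ with strictly smaller parameter that is still flow-admissible and $3$-edge-connected. By minimality, $G'$ will admit a nowhere-zero $8$-flow, which can then be lifted back to $G$ to obtain a contradiction.

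First I would verify the hypotheses of Proposition~\ref{prop:can uncontract and keep 2-unbal 3ec}. The graph $G$ must be unbalanced, since otherwise a switching makes it an ordinary $3$-edge-connected graph, and Seymour's $6$-flow theorem provides a nowhere-zero $6$-flow (hence an $8$-flow), contradicting that $G$ is a counterexample. Being flow-admissible, $2$-edge-connected, and unbalanced, $G$ must be $2$-unbalanced by the Bouchet characterization recalled in the introduction. One should also note that $|V(G)| \ge 2$: a one-vertex signed graph consists of loops only, and a direct check shows any such flow-admissible graph admits a nowhere-zero $2$-flow.

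Next, I would pick any edge $e$ incident to $v$ and apply Proposition~\ref{prop:can uncontract and keep 2-unbal 3ec} to obtain an edge $e'$ so that the uncontracted graph $G'$ at $v$ with $\{e,e'\}$ is $2$-unbalanced and $3$-edge-connected, and hence flow-admissible. The operation adds exactly one vertex and one positive edge $vv'$, so
\[
3|E(G')| - 4|V(G')| \;=\; 3|E(G)| - 4|V(G)| - 1,
\]
which is strictly smaller than $3|E(G)| - 4|V(G)|$. Minimality of $G$ then yields a nowhere-zero $8$-flow $f'$ on $G'$.

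Finally, I would set $f = f'|_{E(G)}$, identifying $E(G)$ with $E(G') \setminus \{vv'\}$ in the obvious way. Because $vv'$ is positive, its two directions point opposite ways and so it contributes with opposite signs to $\partial f'(v)$ and $\partial f'(v')$. After re-identifying $v$ and $v'$ to recover $G$, the boundary at the combined vertex equals $\partial f'(v) + \partial f'(v') = 0$, while boundaries elsewhere are unchanged; so $f$ is a nowhere-zero $8$-flow on $G$, a contradiction. The only substantive ingredient is Proposition~\ref{prop:can uncontract and keep 2-unbal 3ec}; the main obstacle is establishing up front that $G$ is $2$-unbalanced (via the balanced-vs-unbalanced dichotomy) so that the proposition applies.
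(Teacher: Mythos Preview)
Your argument is correct and follows essentially the same route as the paper: uncontract at a high-degree vertex via Proposition~\ref{prop:can uncontract and keep 2-unbal 3ec}, use minimality of the parameter $3|E|-4|V|$ to get a nowhere-zero $8$-flow on $G'$, and then contract the new edge to push the flow back to $G$. The only difference is that you are more explicit than the paper in checking the hypotheses of the proposition (that $G$ is $2$-unbalanced and has at least two vertices), which the paper leaves implicit.
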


\begin{proof}
    Suppose (for a contradiction) that $G$ is not cubic and choose a vertex $v \in V(G)$ with degree greater than 3.  By Proposition \ref{prop:can uncontract and keep 2-unbal 3ec} we may uncontract a positive edge $e$ at the vertex $v$ so that the resulting signed graph $G'$ is both flow-admissible and 3-edge-connected.  Since $G$ is a minimum counterexample and $G'$ has one more edge and vertex, the theorem applies to $G'$ and we may choose a nowhere-zero 8-flow for $G'$.  Now contracting $e$ gives a nowhere-zero 8-flow of $G$, a contradiction.
\end{proof}

In preparation for the next reduction, we prove two lemmas concerning flows in ordinary (not signed) graphs with prespecified values on some edges.

\begin{lemma}
    Let $G$ be an oriented 2-edge-connected graph and let $v \in V(G)$ be incident with exactly three edges $e_1, e_2, e_3$ all directed away from $v$.  Let $\Gamma$ be an abelian group with $|\Gamma| \ge 6$ and let $g_1, g_2, g_3 \in \Gamma \setminus \{0\}$ satisfy $g_1 + g_2 + g_3 = 0$.  Then there exists a nowhere-zero $\Gamma$-flow $\phi$ of $G$ with $\phi(e_i) = g_i$ for $i = 1,2,3.$
\end{lemma}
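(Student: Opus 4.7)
My plan is to recast the problem as a boundary-flow question on $H := G - v$, the graph obtained from $G$ by deleting $v$ and the three edges $e_1, e_2, e_3$. Writing $u_i$ for the other endpoint of $e_i$, a nowhere-zero $\Gamma$-flow $\phi$ on $G$ with $\phi(e_i) = g_i$ is the same as a nowhere-zero function $\chi : E(H) \to \Gamma$ whose boundary takes value $g_i$ at $u_i$ and $0$ at every other vertex. First I would observe that $H$ is connected: since $\deg(v) = 3$, a disconnection of $H$ would have to be glued through $v$'s three edges, and the side receiving at most one of them would be separated from the rest of $G$ by a single-edge cut, contradicting 2-edge-connectivity.

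Next I would handle the bridges of $H$, on which the value of $\chi$ is forced. If $e$ is a bridge of $H$ with $H - e$ having components $H_1, H_2$, then summing the boundary equations over $V(H_1)$ gives $\chi(e) = \pm\sum_{u_i \in V(H_1)} g_i$. I would check this forced value is never zero: if $V(H_1)$ contained none of the $u_i$ (respectively, all three), then $\{e\}$ alone would be an edge-cut of $G$, contradicting 2-edge-connectivity; otherwise the partial sum equals $\pm g_i$ or $\mp g_k$, which is nonzero by hypothesis on the $g_i$'s.

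It remains to choose $\chi$ on the non-bridge edges, which lie in the maximal 2-edge-connected blocks of $H$. Each block $B$ inherits a zero-sum boundary $\beta_B : V(B) \to \Gamma$, obtained by combining the $g_i$ at each $u_i \in V(B)$ with the already-known inflows from bridges at cut-vertices of $H$. I would then invoke a boundary-flow strengthening of Seymour's 6-flow theorem on $B$: for $|\Gamma| \geq 6$, every 2-edge-connected graph realizes any zero-sum boundary by some nowhere-zero $\Gamma$-valued edge function. The canonical route to this is via the auxiliary graph $B^+$ formed by adjoining a new vertex $w$ joined to the vertices in $\supp \beta_B$ (which is still 2-edge-connected), applying Seymour's theorem to obtain a nowhere-zero $\Gamma$-flow on $B^+$, and then performing cycle modifications through $w$ to pin the flow values on the new edges to match $\beta_B$. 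The main obstacle I expect is precisely this last step: the cycle modifications must avoid one forbidden value on each edge they touch without collision, and this only barely works because $|\Gamma| \geq 6$ leaves exactly enough slack; I anticipate that a careful flexibility argument, in the spirit of Seymour's original 6-flow proof, is what makes the lemma go through.
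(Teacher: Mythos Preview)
Your approach has a genuine gap at the final step. The ``boundary-flow strengthening of Seymour's 6-flow theorem'' you invoke---that every 2-edge-connected graph realizes \emph{any} zero-sum $\Gamma$-boundary by a nowhere-zero edge function whenever $|\Gamma|\ge 6$---is exactly the assertion that every 2-edge-connected graph is $\Gamma$-connected in the sense of Jaeger--Linial--Payan--Tarsi, and this assertion is \emph{false}. For instance, the cycle $C_7$ is 2-edge-connected but not $\mathbb{Z}_6$-connected: with boundary $\beta(v_1)=\cdots=\beta(v_6)=1$, $\beta(v_0)=0$, the partial sums along the cycle hit every element of $\mathbb{Z}_6$, so every choice of the free parameter creates a zero somewhere. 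Your auxiliary-vertex-plus-cycle-modification sketch cannot rescue this, because each modification forbids one more value on every edge it touches and there is no slack left. You might hope that in your particular decomposition the boundary on each block is constrained enough (all nonzero entries lying in $\{\pm g_1,\pm g_2,\pm g_3\}$) to exclude such obstructions, but you have not argued this, and making it precise would be at least as hard as the lemma itself; note that a block of $H$ can carry arbitrarily many cut-vertices, so the support of $\beta_B$ is not bounded by three.

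The paper sidesteps group connectivity entirely with a counting argument. For each valid triple $T=(h_1,h_2,h_3)$ let $\Gamma_T(G')$ denote the number of nowhere-zero $\Gamma$-flows of $G'$ taking value $h_i$ on $e_i$. Tutte's deletion-contraction formula applied to any edge $f\notin\{e_1,e_2,e_3\}$ gives $\Gamma_T(G')=\Gamma_T(G'/f)-\Gamma_T(G'-f)$, and induction on $|E(G')|$ (the base case $E(G')=\{e_1,e_2,e_3\}$ being trivial) shows that $\Gamma_T(G)$ is independent of $T$. Seymour's 6-flow theorem then supplies one triple $T$ with $\Gamma_T(G)>0$, hence every triple works.
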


\begin{proof}
    This follows from Tutte's deletion-contraction formula\footnote{For graph $G$ and $e \in E(G)$, the number of nowhere-zero $\Gamma$-flows in $G$ is equal to the number of nowhere-zero $\Gamma$-flows in $G$ contract $e$ ($G / e$) minus the number of nowhere-zero $\Gamma$-flows in $G$ delete $e$ ($G - e$). See, for example, \cite{Diestel3rdEdition} Section 6.3, page 145.} for counting flows.  Define
    \[ \mathcal{T} = \{ (h_1, h_2, h_3) \in ( \Gamma \setminus \{0\} )^3 \mid h_1 + h_2 + h_3 = 0 \}. \]
    If $G'$ is a minor of $G$ obtained by deleting and contracting edges other than $e_1, e_2, e_3$ and $T = (h_1, h_2, h_3) \in \mathcal{T}$ we define $\Gamma_T(G')$ to be the number of nowhere-zero $\Gamma$-flows $\phi$ of $G'$ that satisfy $\phi(e_i) = h_i$.\\
    
    \noindent\emph{Claim}:  
    $\Gamma_T(G') = \Gamma_{T'}(G')$
    holds for every $T,T' \in \mathcal{T}$. 
    
    We prove this claim by induction on $|E(G')|$.  As a base case, when $G'$ has just the edges $e_1, e_2, e_3$ the resulting graph has exactly one suitable nowhere-zero flow if $e_1, e_2, e_3$ are parallel, and none otherwise.  For the inductive step, choose an edge $f \in E(G) \setminus \{e_1,e_2,e_3\}$ and apply the deletion-contraction formula together with the inductive hypothesis as follows:
    \[ \Gamma_T(G') = \Gamma_T(G' / f) - \Gamma_T(G' - f ) = \Gamma_{T'}(G' / f) - \Gamma_{T'}(G' - f) = \Gamma_{T'}(G') \]
    This completes the proof of claim.\\

    By Seymour's 6-flow theorem \cite{Seymour6flow}, we must have $\Gamma_T(G) \neq 0$ for some $T \in \mathcal{T}$ and then by the above argument, this must hold true for every triple in $\mathcal{T}$.
\end{proof}

\begin{lemma}\label{prespecify_vertex_integer}
    Let $G$ be an oriented 2-edge-connected graph and and let $v \in V(G)$ be incident with exactly three edges $e_1, e_2, e_3$ all directed away from $v$.  Let $k \ge 6$ and let $g_1, g_2, g_3 \in \{ -(k-1), \ldots k-1 \} \setminus \{0\}$ satisfy $g_1 + g_2 + g_3 = 0$.  Then there exists a nowhere-zero $k$-flow $\phi$ of $G$ satisfying $\phi(e_i) = g_i$ for $i=1,2,3$.
\end{lemma}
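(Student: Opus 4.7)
The plan is to reduce to the abelian-group version (the preceding lemma) applied with $\Gamma = \mathbb{Z}_k$, and then lift a nowhere-zero $\mathbb{Z}_k$-flow to a nowhere-zero integer $k$-flow via an extremal argument that preserves the prescribed values on $e_1,e_2,e_3$. Applying the preceding lemma with $\Gamma = \mathbb{Z}_k$ (valid since $|\mathbb{Z}_k| \ge 6$) and residues $g_i \bmod k$ (nonzero since $|g_i| < k$) yields a nowhere-zero $\mathbb{Z}_k$-flow $\bar\phi$ with $\bar\phi(e_i) \equiv g_i \pmod k$. Let $\mathcal{P}$ be the set of integer functions $\phi: E(G)\to\{-(k-1),\ldots,k-1\}\setminus\{0\}$ with $\phi\equiv\bar\phi\pmod k$ and $\phi(e_i)=g_i$; this is nonempty, since taking the unique lift in $\{1,\ldots,k-1\}$ on each edge other than $e_1,e_2,e_3$ produces an element. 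Choose $\phi\in\mathcal{P}$ minimizing $\Phi(\phi):=\sum_{w\in V(G)}|\partial\phi(w)|$. Since $g_1+g_2+g_3=0$ we have $\partial\phi(v)=0$; for $w\neq v$, $\partial\phi(w)\equiv 0\pmod k$. The goal is to show $\Phi(\phi)=0$.

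Suppose not. Then there exist $w^+,w^-\neq v$ with $\partial\phi(w^+)\ge k$ and $\partial\phi(w^-)\le -k$. Form the auxiliary digraph $D$ on $V(G)$ by orienting each edge $e$ from its tail to its head if $\phi(e)>0$, else from head to tail. Following an arc $u\to u'$ of $D$ amounts to replacing $\phi(e)$ by $\phi(e)-k$ (if $\phi(e)>0$) or $\phi(e)+k$ (if $\phi(e)<0$); the new value remains in $\{-(k-1),\ldots,k-1\}\setminus\{0\}$, and the divergence decreases by $k$ at $u$ and increases by $k$ at $u'$. Thus a directed path in $D\setminus\{e_1,e_2,e_3\}$ from $w^+$ to $w^-$ would produce an element of $\mathcal{P}$ with $\Phi$ reduced by $2k$, contradicting minimality. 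It remains to exhibit such a path.

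Let $S$ be the vertices reachable from $w^+$ in $D\setminus\{e_1,e_2,e_3\}$; note $v\notin S$ since $v$'s only incident edges are $e_1,e_2,e_3$. Assume for contradiction $w^-\notin S$. Then every $u\in S$ has $\partial\phi(u)\ge 0$ (as $\partial\phi(u)\equiv 0\pmod k$ and $>-k$), so $\sum_{u\in S}\partial\phi(u)\ge k$. On the other hand, the absence of outgoing arcs in $D\setminus\{e_1,e_2,e_3\}$ forces each $e\in\delta(S)\setminus\{e_1,e_2,e_3\}$ to contribute at most $-1$ to $\sum_{u\in S}\partial\phi(u)=\sum_{e\in E^+(S)}\phi(e)-\sum_{e\in E^-(S)}\phi(e)$. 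Each $e_i\in\delta(S)$ lies in $E^-(S)$ (since $v\notin S$ and $e_i$ emanates from $v$) and so contributes $-g_i$. Setting $t=|\delta(S)\cap\{e_1,e_2,e_3\}|$, I would split into the cases $t\in\{0,1,2,3\}$: using $|g_i|\le k-1$, the identity $g_1+g_2+g_3=0$ (which makes the $e_i$ contribution vanish when $t=3$), 2-edge-connectedness ($|\delta(S)|\ge 2$), and $k\ge 6$, each case yields an upper bound on $\sum_{u\in S}\partial\phi(u)$ strictly less than $k$, a contradiction.

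The main obstacle is the cut-based calculation; the case $t=3$, where all three distinguished edges lie in the cut, is the most delicate, and is exactly where the constraint $g_1+g_2+g_3=0$ is indispensable. The hypothesis $k\ge 6$ is needed both to invoke the preceding lemma and to close this extremal case.
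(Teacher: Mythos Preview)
Your argument is correct and complete in its essentials, though it follows a different route from the paper.  The paper first invokes Tutte's theorem to lift the $\mathbb{Z}_k$-flow to a genuine integer $k$-flow~$\phi$ (so $\partial\phi\equiv 0$), accepting that each $\phi(e_i)$ may be off from $g_i$ by $\pm k$; it then observes that at most two of the three values are wrong (say $\phi(e_2)=g_2+k$, $\phi(e_3)=g_3-k$), reorients so all other edges carry positive flow, and corrects both wrong values with a \emph{single} directed $u_2\to u_3$ path in $G-v$, found by a short cut argument.  You instead fix the prescribed values from the outset and reprove the lifting step with that extra constraint, minimizing total divergence and pushing $k$ units along paths that avoid $e_1,e_2,e_3$.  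Your version is self-contained (no appeal to Tutte's equivalence) while the paper's is shorter because the black box handles all but one rerouting.

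Two small corrections that do not affect validity.  First, your sentence ``assume $w^-\notin S$; then every $u\in S$ has $\partial\phi(u)\ge 0$'' is not justified by the assumption on that particular $w^-$; the correct phrasing is that if some $u\in S$ had $\partial\phi(u)\le -k$ then the path from $w^+$ to $u$ already contradicts minimality, so we may assume no such $u$ exists.  Second, your diagnosis of the case split is slightly off: the tight case is $t=2$, where the bound is $-m+g_\ell\le k-1$, not $t=3$, where the sum $g_1+g_2+g_3=0$ makes the bound $-m\le 0$ immediately.  In fact neither 2-edge-connectedness nor $k\ge 6$ is needed in your cut calculation itself (connectedness suffices, since in each case the non-$e_i$ contribution is $\le 0$ and the $e_i$ contribution is $\le k-1$); both hypotheses are used only to invoke the preceding lemma.
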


\begin{proof}
    By the previous lemma, we may choose a nowhere-zero flow $\psi : E(G) \rightarrow \mathbb{Z}_k$
so that $\psi(e_i) = g_i + k \mathbb{Z}$ for $i =1,2,3$.  By Tutte's Theorem\footnote{This is well-known lore. See, for example, the proof of \cite[Theorem 21.3]{BondyMurty2008}.}, we may choose a $k$-flow $\phi : E(G) \rightarrow \mathbb{Z}$ so that $\phi(e) + k \mathbb{Z} = \psi(e)$ holds for every $e \in E(G)$.  Now for $1 \le i \le 3$ the edge $e_i$ either has the desired value: $\phi(e_i) = g_i$, or has $\phi(e_i) = g_i \pm k$.  Since $\sum_{i=1}^3 \phi(e_i) = 0 = \sum_{i=1}^3 g_i$ either all three edges have the desired value and we are done, or by rearranging indices we may assume $\phi(e_1) = g_1$, $\phi(e_2) = g_2 + k$ and $\phi(e_3) = g_3 - k$.  Note that this implies $\phi(e_2) > 0$ and $\phi(e_3) < 0$.  Modify $\phi$ and the orientation of $G$ as follows:  for every edge $f \in E(G) \setminus \{e_1,e_2,e_3\}$ with $\phi(f) < 0$, reverse $f$ and multiply $\phi(f)$ by $-1$.  Note that after this operation $\phi$ is still a nowhere-zero $k$-flow and $\phi$ is positive on all edges apart from $e_1, e_2, e_3$. 

Let $e_i$ have ends $v$ and $u_i$ for $i=1,2,3$.  We claim that there is a directed path from $u_2$ to $u_3$ in the graph $G - v$.  Suppose (for a contradiction) that no such path exists.  Then there is a partition of $V(G) \setminus \{v\}$ into $\{A,B\}$ with $u_2 \in A$ and $u_3 \in B$ so that there are no edges with tail in $A$ and head in $B$.  If $u_1 \in A$ then add $v$ to the set $A$, otherwise $u_1 \in B$ and we add $v$ to $B$.  After this modification, all edges directed from $B$ to $A$ have positive flow in $\phi$ and all edges directed from $A$ to $B$ have negative flow, contradicting the cut rule for flows.  Therefore, we may choose a directed path $P$ from $u_2$ to $u_3$ in $G-v$.  Modify the flow $\phi$ by subtracting $k$ on all edges in $E(P) \cup \{e_2\}$ and adding $k$ on $e_3$.  This results in a new flow that satisfies the lemma.
\end{proof}

For a (signed) graph $G=(V,E)$ and a set $X \subseteq V(G)$ we use $d(X)$ to denote the number of edges with one end in $X$ and one end in $V\setminus X$.
To \emph{identify} $x,y \in V(G)$, set $x = y$ (so the cardinality of $V(G)$ is decreased by one).

\begin{lemma}\label{control3edgecut}
    If $G$ is a counterexample to Theorem \ref{mainthm} for which $3|E(G)| - 4|V((G)|$ is minimum, then $G$ does not have a set $Y \subseteq V$ with $|Y| \ge 2$ and $d(Y) = 3$ for which $G[Y]$ is balanced.  
\end{lemma}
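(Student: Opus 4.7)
The plan is to suppose for contradiction that such a set $Y$ exists and to derive a nowhere-zero $8$-flow on $G$ by (i) contracting $Y$ to obtain a strictly smaller flow-admissible $3$-edge-connected graph $G_2$ that admits a nowhere-zero $8$-flow by minimality, and (ii) using Lemma~\ref{prespecify_vertex_integer} to extend this flow across $G[Y]$. Label the cut edges as $e_1,e_2,e_3$ with $Y$-ends $u_1,u_2,u_3$. By Lemma~\ref{getcubic} the graph $G$ is cubic, and the handshake lemma in $G[Y]$ gives $2|E(G[Y])| = 3|Y|-3$, so $|Y|$ is odd and hence $|Y|\ge 3$. Since $G[Y]$ is balanced, I first switch so that every edge of $G[Y]$ is positive.

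Next, form $G_2 = G/Y$ by identifying $Y$ to a single vertex $\bar y$, keeping the direction at $\bar y$ of each $e_i$ inherited from its direction at $u_i$. A routine check shows $G_2$ is cubic and $3$-edge-connected, since any cut of $G_2$ pulls back to a cut of $G$ of the same size. Moreover $G_2$ is flow-admissible: restricting any nowhere-zero integer flow of $G$ to $E(G_2)$ yields conservation at $\bar y$, because summing conservation over $u\in Y$ cancels the contributions of the (all-positive) internal edges pairwise. A short calculation gives
\[ 3|E(G_2)| - 4|V(G_2)| = 3|E(G)| - 4|V(G)| - \tfrac{|Y|-1}{2}, \]
so $G_2$ is strictly smaller and, by the minimality of $G$, admits a nowhere-zero $8$-flow $\phi_2$.

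To extend $\phi_2$ across $G[Y]$, I build an auxiliary graph $H$ on vertex set $Y\cup\{w\}$ whose edges are $E(G[Y])$ together with three positive edges $e_1',e_2',e_3'$, where $e_i'$ joins $w$ to $u_i$ and is oriented away from $w$. Since $G[Y]$ is all-positive, $H$ is (equivalent to) an ordinary graph, and the same pullback argument shows $H$ is $3$-edge-connected. For each $i$, define $g_i = -\varepsilon_i\,\phi_2(e_i)\in\{-7,\dots,7\}\setminus\{0\}$, where $\varepsilon_i = +1$ if $e_i\in\partial^+(u_i)$ in $G$ and $\varepsilon_i = -1$ otherwise. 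Since the direction of $e_i$ at $\bar y$ in $G_2$ matches the direction of $e_i$ at $u_i$ in $G$, flow conservation $\partial\phi_2(\bar y)=0$ gives $\sum_i \varepsilon_i\phi_2(e_i)=0$, hence $g_1+g_2+g_3=0$. Lemma~\ref{prespecify_vertex_integer} (with $k=8\ge 6$) then yields a nowhere-zero $8$-flow $\phi_H$ of $H$ with $\phi_H(e_i')=g_i$ for $i=1,2,3$.

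Finally, define $\phi$ on $G$ to agree with $\phi_H$ on $E(G[Y])$ and with $\phi_2$ on $E(G)\setminus E(G[Y])$. Conservation holds at every $v\in V\setminus Y$ (from $\phi_2$) and at every $u\in Y\setminus\{u_1,u_2,u_3\}$ (from $\phi_H$); at each $u_i$, the choice $g_i=-\varepsilon_i\phi_2(e_i)$ was made precisely so that the internal contribution $\partial\phi|_{G[Y]}(u_i) = g_i$ cancels the contribution $\varepsilon_i\phi_2(e_i)$ of $e_i$, giving $\partial\phi(u_i)=0$. The resulting nowhere-zero $8$-flow on $G$ contradicts the choice of $G$. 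The main obstacle I expect is the careful bookkeeping of signs and orientations when passing between $G$, $G_2$, and $H$ — in particular verifying that $g_1+g_2+g_3=0$ follows from the signed conservation of $\phi_2$ at $\bar y$, and that the combined flow is conserved at each $u_i$ regardless of whether the cut edge $e_i$ is positive or negative in $G$.
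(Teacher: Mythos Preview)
Your proof is correct and follows essentially the same approach as the paper's. Both arguments switch so that $G[Y]$ is all-positive, contract $Y$ to obtain a smaller $3$-edge-connected flow-admissible signed graph (your $G_2$, the paper's $G_y$), invoke minimality to get a nowhere-zero $8$-flow there, and then apply Lemma~\ref{prespecify_vertex_integer} to the ordinary graph obtained from the $Y$-side (your $H$, the paper's $G_x$) with the cut-edge values prescribed; the only cosmetic difference is that the paper switches so that the three cut edges themselves are positive, whereas you leave them as they are and absorb the discrepancy into the signs $\varepsilon_i$.
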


\begin{proof}
    Suppose (for a contradiction) that such a set $Y$ exists and let $X = V(G) \setminus X$.  By Lemma \ref{getcubic} the graph $G$ is cubic so the edges in $E(X,Y)$ are pairwise nonadjacent.  It follows from this and our assumptions that we may modify the signature of $G$ so that every edge incident with a vertex in $Y$ is positive.  Choose an orientation of $G$ for which all edges in $E(X,Y)$ are directed from a vertex in $X$ to a vertex in $Y$.  Let $G_x$ $(G_y)$ be the oriented signed graph obtained from $G$ by identifying $X$ to a new vertex $x$ (identifying $Y$ to a new vertex $y$).  By our assumptions, $G_x$ has all {non-loop} edges positive {(i.e. it is an oriented graph, plus 0 or at least two negative loops incident to $x$)} and $G_y$ is flow-admissible (if $G_y$ were not flow-admissible, the same would be true of $G$).  Note that for any cubic graph $H$ we have $3|E(H)| - 4 |V(H)| = \frac{1}{2}|V(H)|$.  Using this property and the minimality of our counterexample, we may choose a nowhere-zero 8-flow, say $\phi_y$, of $G_y$.  By applying Lemma \ref{prespecify_vertex_integer} we may choose a nowhere-zero 8-flow, say $\phi_x$ of $G_x$ so that $\phi_x(e) = \phi_y(e)$ holds for every $e \in E(X,Y)$.  Combining $\phi_x$ and $\phi_y$ then gives a suitable nowhere-zero 8-flow of $G$.  This contradiction completes the proof.
    \end{proof}

\section{Finding a usable cycle}\label{3ec usable cycle}

In the following definition, we describe the central class of graphs of interest for us in establishing the proof of our main theorem. A \emph{subcubic} (signed) graph has maximum degree three.  

\begin{definition}
    Let $G$ be a signed graph.  We say that $G$ is \emph{well-behaved} if it is subcubic and satisfies the following two properties for every non-empty $X \subseteq V(G)$
\begin{enumerate}[(i)]
    \item $3 \le d(X) + \sum_{x \in X} (3 - \mathrm{deg}(x) )$
    \item If $G[X]$ is balanced and $|X| \ge 2$ then $4 \le d(X) + \sum_{x \in X} (3 - \mathrm{deg}(x) )$
\end{enumerate}
\end{definition}\index{well-behaved}

Note that by taking $X = V(G)$ in part (i) of the above definition, every well-behaved graph must have some vertices with degree less than 3. Another thing to note is that every induced subgraph of a well-behaved graph is also well-behaved. The next observation illuminates the reason for the above definition.  It follows immediately from our definitions together with Lemmas \ref{getcubic} and \ref{control3edgecut}.  

\begin{observation}\label{obs:well-behaved}
    If $G$ is a counterexample to Theorem \ref{mainthm} for which $3|E(G)| - 4|V((G)|$ is minimum, then every proper induced subgraph of $G$ is well-behaved.  
\end{observation}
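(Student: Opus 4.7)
The plan is to unpack what condition (i) and condition (ii) of \emph{well-behaved} become for a proper induced subgraph $H = G[Y]$ (with $\emptyset \ne Y \subsetneq V(G)$) of our minimum counterexample $G$, and then recognize that these two conditions are exactly what Lemmas \ref{getcubic} and \ref{control3edgecut} hand us.

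First I would invoke Lemma \ref{getcubic} to conclude that $G$ is cubic; in particular $H$ is subcubic, which is the first ingredient in the definition of well-behaved. Next, fix any non-empty $X \subseteq V(H) = Y$. The key observation is a degree-counting identity: because every vertex of $G$ has degree exactly $3$, for each $x \in X$ the quantity $3 - \deg_H(x)$ counts the edges of $G$ from $x$ to $V(G) \setminus Y$. Adding $d_H(X)$, which counts edges from $X$ to $Y \setminus X$ inside $H$, gives
\[
d_H(X) + \sum_{x \in X}\bigl(3 - \deg_H(x)\bigr) \;=\; d_G(X).
\]
Since $X \subseteq Y \subsetneq V(G)$, the set $X$ is a non-empty proper subset of $V(G)$, so $d_G(X)$ is well-defined as a cut in $G$.

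Now condition (i) reads $d_G(X) \ge 3$, which is immediate from the assumption that $G$ is $3$-edge-connected. For condition (ii), suppose $|X| \ge 2$ and $H[X] = G[X]$ is balanced. Then Lemma \ref{control3edgecut} rules out $d_G(X) = 3$, so combined with $d_G(X) \ge 3$ from $3$-edge-connectivity we obtain $d_G(X) \ge 4$, which is exactly condition (ii). This verifies that $H$ is well-behaved.

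There is no substantive obstacle here; the only thing to be careful about is the degree-counting identity translating the mixed expression $d_H(X) + \sum_{x \in X}(3-\deg_H(x))$ into the single $G$-cut $d_G(X)$, and the observation that properness of $Y$ in $V(G)$ is what lets us apply $3$-edge-connectivity (and Lemma \ref{control3edgecut}) to $X$ inside $G$ rather than inside $H$.
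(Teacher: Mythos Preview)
Your argument is correct and matches the paper's approach exactly: the paper simply states that the observation ``follows immediately from our definitions together with Lemmas \ref{getcubic} and \ref{control3edgecut},'' and your proof supplies precisely this unpacking via the identity $d_H(X) + \sum_{x \in X}(3 - \deg_H(x)) = d_G(X)$ in the cubic graph $G$.
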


In the next lemma, we require Corollary \ref{thm: fromTied}, which is a straightforward consequence of the following theorem found in \cite{devos2023cycles}.

If $G$ is a signed graph and $e_1, e_2 \in E(G)$, then we say that $e_1$ and $e_2$ are \emph{untied} if there exists a positive cycle $C^+$ and a negative cycle $C^-$ so that $\{e_1, e_2\} \subseteq E(C^+) \cap E(C^-)$. Otherwise we say that $e_1$ and $e_2$ are \emph{tied}.

\begin{theorem}[\cite{devos2023cycles}]
\label{main}
Let $G$ be a 3-connected signed graph and let $e_1,e_2 \in E(G)$ be distinct and not in parallel with any other edges.  Then $e_1$ and $e_2$ are tied in $G$ if and only if one of the following holds:
\begin{enumerate}
\item There exists a parallel class $F$ containing edges of both signs so that $F^+ = F \cup \{e_1, e_2\}$ is an edge-cut and $G - F^+$ is balanced,
\item $e_1, e_2$ are incident with a common vertex $v$ and $G-v$ is balanced,
\item $G - \{e_1, e_2\}$ is balanced.
\end{enumerate}
\end{theorem}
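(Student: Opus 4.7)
The plan is to establish the two directions of the biconditional separately.

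For the easy direction (each of (1)--(3) implies tied), I would in each case switch to an equivalent signature that makes the sign of every cycle through both $e_1, e_2$ transparent. In case (3), after switching so that every edge of $H := G - \{e_1, e_2\}$ is positive, every cycle $C$ with $\{e_1, e_2\} \subseteq E(C)$ has $\sigma(C) = \sigma(e_1)\sigma(e_2)$, independent of $C$. In case (2), any such cycle visits the shared vertex $v$ via its two cycle-edges at $v$, and the rest of the cycle is a path in $G - v$; switching $G - v$ to be all-positive gives the same conclusion. In case (1), since $F^+$ is an edge-cut and every cycle crosses $F^+$ an even number of times, and since a simple cycle can contain at most one edge of the parallel class $F$ (two edges of $F$ would force the cycle to be the $2$-cycle on the endpoints of $F$, which does not contain $e_1$ or $e_2$ by the parallel-class hypothesis), a cycle through $e_1, e_2$ must use zero edges of $F$. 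Such a cycle lies inside $(G - F^+) \cup \{e_1, e_2\}$, and after switching $G - F^+$ to be all-positive it again has the fixed sign $\sigma(e_1)\sigma(e_2)$.

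For the hard direction I would argue the contrapositive: assuming none of (1)--(3) holds, I would construct cycles of opposite signs through both $e_1, e_2$. Since (3) fails, $H$ contains an unbalanced cycle $D$. First suppose $e_1, e_2$ share a vertex $v$. The failure of (2) yields an unbalanced cycle in $G - v$, which is 2-connected by the 3-connectedness of $G$. A standard lemma in 2-connected signed graphs (proved by augmenting a shortest path between the given vertices with a detour around the unbalanced cycle using two internally disjoint connections supplied by 2-connectedness) produces paths of both signs between any prescribed pair of vertices; applying this to the other endpoints $b_1, b_2$ of $e_1, e_2$ and concatenating with $e_1, e_2$ yields cycles through both of opposite signs, contradicting tied.

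Suppose instead $e_1, e_2$ share no endpoint. The key technique is a symmetric-difference swap: given any cycle $C$ through both $e_1, e_2$ and any unbalanced cycle $D \subseteq H$ whose edge set meets $C$, the even subgraph $C \triangle D$ decomposes into edge-disjoint cycles, one of which still contains $e_1, e_2$ and has sign $-\sigma(C)$ (since $\sigma(D) = -1$). So tied forces every unbalanced cycle of $H$ to be edge-disjoint from every cycle through $\{e_1, e_2\}$. I would then use 3-connectivity: Menger-type path packings produce a large family of cycles through both $e_1, e_2$ that together sweep out most of $G$, so the unbalanced cycles of $H$ must lie in a very restricted region. Combined with the hypothesis that $e_1, e_2$ sit in no nontrivial parallel class, this forces the unbalanced cycles of $H$ to be concentrated in a single parallel class $F$ whose edges together with $e_1, e_2$ form an edge-cut separating $G$ into balanced pieces --- exactly case (1).

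The main obstacle is this final structural step in the non-adjacent subcase: showing that edge-disjointness between every unbalanced cycle of $H$ and every cycle through $\{e_1, e_2\}$, under 3-connectedness and the parallel-class restriction on $e_1, e_2$, leaves only the cut configuration of (1). I expect this requires a careful minimum-cut analysis using Menger's theorem to rule out every alternative way the unbalanced cycles of $H$ could be isolated from the cycles through $\{e_1, e_2\}$.
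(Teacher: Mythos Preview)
The paper does not contain a proof of this theorem: it is quoted verbatim from \cite{devos2023cycles} and used only to derive Corollary~\ref{thm: fromTied}. So there is no ``paper's own proof'' to compare your proposal against.

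That said, your proposal has a genuine gap in the hard direction. In the non-adjacent subcase your ``key technique'' is the claim that if $C$ is a cycle through $e_1,e_2$ and $D\subseteq H$ is an unbalanced cycle meeting $C$, then $C\triangle D$ contains a cycle through both $e_1,e_2$ of sign $-\sigma(C)$. This does not follow. The even subgraph $C\triangle D$ decomposes into edge-disjoint cycles, and indeed $e_1,e_2\in E(C\triangle D)$ since $D$ avoids them; but nothing forces $e_1$ and $e_2$ to land in the \emph{same} cycle of the decomposition, and even when they do, the sign change contributed by $D$ may be absorbed by one of the other cycles in the decomposition rather than by the one containing $e_1,e_2$. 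So the conclusion ``tied forces every unbalanced cycle of $H$ to be edge-disjoint from every cycle through $\{e_1,e_2\}$'' is unjustified, and the rest of your argument in that subcase rests on it. You also explicitly leave the final structural step (forcing configuration~(1)) as an expectation rather than an argument, so even granting the symmetric-difference step the proof would be incomplete.

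The easy direction is essentially fine; your treatment of case~(1) is correct once one observes that a cycle through $e_1,e_2$ already contributes two crossings of the cut $F^+$, and a simple cycle can pick up at most one further edge from the parallel class $F$, so parity forces zero.
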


\begin{corollary}
\label{thm: fromTied}
    Let $G$ be a 3-connected, cubic signed graph and let $e_1,e_2 \in E(G)$ be distinct. If $G-\{e_1,e_2\}$ is unbalanced, then there exists a positive cycle containing both $e_1$ and $e_2$.
\end{corollary}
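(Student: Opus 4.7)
The plan is to leverage Theorem \ref{main} and systematically rule out each of its three tied outcomes, which by definition forces $e_1,e_2$ to be untied and thus yields a positive cycle through both. Before invoking the theorem I would dispose of the degenerate case $|V(G)| \le 2$: being cubic and 3-edge-connected on two vertices forces $G$ to consist of two vertices joined by three parallel edges, so $G - \{e_1, e_2\}$ is a single edge containing no cycle and hence balanced. In this case the hypothesis of the corollary fails and there is nothing to prove.

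Now assume $|V(G)| \ge 3$. The first key observation is that $G$ has no parallel edges: if two vertices $u,v$ were joined by at least two edges, the cubic constraint gives each of $u,v$ at most one further incident edge, so the cut $\{u,v\}$ would have size at most $2$, contradicting 3-edge-connectedness. Consequently $e_1$ and $e_2$ are not in parallel with any other edge, so Theorem \ref{main} applies.

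Suppose for contradiction that no positive cycle contains both $e_1,e_2$; then $e_1, e_2$ are tied, and one of the three outcomes of Theorem \ref{main} must hold. Outcome (1) requires a parallel class $F$ containing edges of both signs, which forces $|F|\ge 2$ and contradicts the absence of parallel edges. Outcome (3) says directly that $G - \{e_1,e_2\}$ is balanced, contradicting the hypothesis of the corollary. For outcome (2), where $e_1,e_2$ share a common vertex $v$ and $G - v$ is balanced, I would argue that in $G - \{e_1,e_2\}$ the vertex $v$ retains only its third incident edge $e_3$ and therefore has degree $1$; so $v$ lies on no cycle, every cycle of $G - \{e_1,e_2\}$ is contained in the balanced subgraph $G - v$ and hence positive, and $G - \{e_1,e_2\}$ is balanced -- once again contradicting the hypothesis. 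All three outcomes fail, so $e_1,e_2$ must be untied and a positive cycle containing both exists.

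The only genuine combinatorial input is the simple fact that a 3-edge-connected cubic signed graph on at least three vertices has no parallel edges; everything else is a direct application of Theorem \ref{main} together with the elementary cycle-space observation behind case (2). I do not anticipate any serious obstacle.
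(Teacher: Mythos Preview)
Your proposal is correct and is precisely the straightforward derivation from Theorem \ref{main} that the paper has in mind (the paper does not spell out a proof of the corollary, simply calling it ``a straightforward consequence'' of that theorem). Your handling of the three tied outcomes is clean; the only remark is that the degenerate case $|V(G)|\le 2$ is in fact vacuous under the usual convention that $3$-connected implies $|V(G)|\ge 4$, so you could omit it, but including it does no harm.
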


We say a generalized cycle (i.e.: a cycle or a single vertex) $C$ in a subcubic signed graph is \emph{good}\index{good cycle}\index{cycle!good} if $C$ is either a vertex of degree 0 or 1, or $C$ is a positive cycle containing at least two vertices of degree 2.  A generalized cycle is \emph{usable}\index{usable cycle}\index{cycle!usable} if it is either good, or is a negative cycle containing at most one vertex not of degree 2. We use $V_i(G)$ to denote the set of vertices of degree $i$ in $G$. A \emph{theta} graph is a 2-connected subcubic graph containing exactly two vertices of degree three.

\begin{lemma}\label{usable_cycle}
    If $G$ is a well-behaved, 2-connected signed graph, then either $G$ is a negative cycle, or $G$ contains a good cycle.
\end{lemma}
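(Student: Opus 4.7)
The plan is to argue by case analysis. If every vertex of $G$ has degree $2$, then $G$ is a cycle (by $2$-connectedness and subcubicity); if $G$ is a negative cycle, the first alternative of the lemma holds, and otherwise $G$ is a positive cycle, in which case condition (ii) of well-behavedness applied to $X = V(G)$ (which is balanced) yields $3|V(G)| - 2|E(G)| \ge 4$. Since $|E(G)| = |V(G)|$ in a cycle, this forces $|V(G)| \ge 4$, and $G$ is itself a good positive cycle. Otherwise $G$ has a vertex of degree $3$, and condition (i) with $X = V(G)$ yields $|V_2(G)| \ge 3$. If $G$ is balanced, every cycle is positive; picking any two degree-$2$ vertices $u,v$ and using $2$-connectedness (Whitney's theorem) to find a cycle containing both gives a good positive cycle.

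The substantive case is $G$ unbalanced. I would choose a shortest negative cycle $C$ of $G$. Since $G$ has a degree-$3$ vertex and is $2$-connected, $G \ne C$, so there is either a chord $f$ of $C$ (when $V(C) = V(G)$) or an ear from $C$ to $C$ through vertices outside $V(C)$. In either case we obtain a theta subgraph $\Theta$ containing $C$ as one of its three cycles; its three cycles have signs whose product equals $+1$, so since $C$ is negative, exactly one of the other two cycles of $\Theta$ is positive, and this is the candidate good cycle. To show this positive cycle contains at least two degree-$2$ vertices of $G$, I would combine the minimality of $C$ (which forbids short arcs that would form smaller negative cycles together with the chord or ear) with well-behavedness: condition (ii) with $|X| = 2$ forbids a positive digon (two parallel edges of the same sign), and condition (ii) with $|X| = 3$ forbids a positive triangle. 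These exclusions force the positive cycle of $\Theta$ to be long enough that its interior vertices contribute the required degree-$2$ vertices.

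The main obstacle is the detailed case analysis in this last step, particularly when the ear passes through internal vertices that have degree $3$ in $G$ (with attachments outside $\Theta$), so that the ``interior'' vertices on the positive cycle in $\Theta$ are not automatically of degree~$2$ in $G$. Handling this requires careful and repeated application of the well-behavedness conditions to subsets $X$ associated with the ear, the arcs of $C$, and various unions thereof, and likely involves a finer case split based on whether $V(C) = V(G)$, on the length of the ear, and on the degrees of its internal vertices in $G$. The ultimate goal is to reduce to one of the excluded small balanced configurations forbidden by condition (ii), or otherwise to locate two degree-$2$ vertices of $G$ on the chosen positive cycle.
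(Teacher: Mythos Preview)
Your treatment of the easy cases is fine: when $G$ is a cycle, or when $G$ is balanced, your argument works. The gap is exactly where you say it is, and it is a real one.

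Minimality of a shortest negative cycle $C$ only tells you that $C$ has no chord and that the negative cycle of any theta built on $C$ is no shorter than $C$. It says nothing about \emph{where the degree-$2$ vertices of $G$ sit relative to that theta}. Likewise, your use of condition~(ii) to forbid positive digons and triangles only bounds the \emph{length} of the positive cycle of the theta; it does not force that cycle to contain any degree-$2$ vertex of $G$. These are different things as soon as the ear has internal vertices of degree~$3$.

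Here is a concrete well-behaved instance where your recipe, as written, fails. Take $K_4$ on $\{a,b,c,d\}$, subdivide $ab$ twice (new vertices $u_1,u_2$) and $cd$ twice (new vertices $v_1,v_2$), and make exactly one edge on the $a\!-\!b$ path negative. One checks directly that this $G$ is well-behaved. The shortest negative cycle is $C=a\,u_1\,u_2\,b\,c\,a$ (length~$5$). The most natural ear is $a\,d\,b$, and the resulting theta has positive cycle $a\,c\,b\,d\,a$, which contains \emph{no} degree-$2$ vertex of $G$. A different ear (through $v_1,v_2$) does yield a good cycle, but nothing in your proposal selects it.

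The paper's proof does not proceed via a shortest negative cycle at all. It splits on whether $G$ is a subdivision of a $3$-connected graph. If not, a carefully chosen $2$-edge-cut and a minimality argument on the smaller side locate a good cycle directly. If $G$ \emph{is} a subdivision of a $3$-connected $H$, the paper invokes Corollary~\ref{thm: fromTied} (a consequence of a structural theorem about tied edges from \cite{devos2023cycles}): for any two subdivided edges $e_1,e_2\in E(H)$, if $H-\{e_1,e_2\}$ is unbalanced then there is a positive cycle through both. That positive cycle automatically picks up two degree-$2$ vertices. This external tool is precisely what lets the paper steer the positive cycle through the subdivided edges, and your plan has no substitute for it.
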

\begin{proof}
    Suppose for contradiction that the lemma does not hold.  We proceed in two cases.

    \begin{center}
        \includegraphics[]{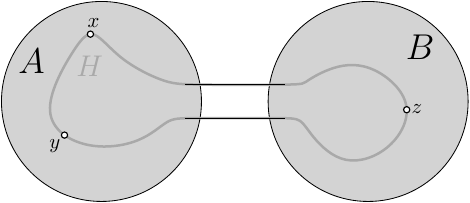}
    \end{center}

    In the first case, suppose $G$ is not a subdivison of a $3$-connected signed graph. If $G$ were itself a cycle then it would be usable, a contradiction, which means there is a $2$-edge-cut $E(A,B)$ that separates two degree-three vertices in $G$. Because $G$ is well-behaved, without loss of generality there exist $\{x,y\} \subseteq V_2(G) \cap A$, and $z \in V_2(G) \cap B$.  We may further suppose that $E(A,B)$ was chosen so that $B$ is minimal. By $2$-connectivity, $G$ has a subgraph $H$ that is either a cycle or a theta with $x,y,z \in V(H)$; and such that if $H$ is a theta, then each of $x,y,z$ are on different branches of $H$. If $H$ is a positive cycle or a theta, then $H$ contains a good cycle, a contradiction. Hence it must be that $H$ is a negative cycle. If $G[B]$ is unbalanced, then we may reroute the path of $H$ through $B$ (possibly missing $z$) so it is a positive cycle. This rerouted cycle contains $x$ and $y$ and so it is good, again a contradiction. Hence it must be that $G[B]$ is balanced. By property (ii) in the definition of well-behaved, there must exist $w \in V_2(G) \cap B$ with $w \neq z$.  If there is a cycle containing $w$ and $z$ in $G[B]$, then it is positive and good, a contradiction. Hence it must be that $G[B]$ has a cut-edge separating $w$ and $z$. But this cut-edge contradicts that $B$ was chosen to be minimal. This completes the first case.

    In the second case, suppose $G$ is a subdivision of a $3$-connected graph, say $H$. Let $S \subseteq E(H)$ be the set of edges that are subdivided to form $G$, and notice that by our connectivity assumptions $|S| \ge 2$. Let $e_1, e_2 \in S$. Since $G$ does not have a good cycle, by Corollary \ref{thm: fromTied} this means $G-\{e_1,e_2$\} is balanced. And so, again because $G$ does not have a good cycle, we may assume the signature of $H$ is such that $e_1$ is negative while all other edges are positive.
    If $|S|\ge 3$ then by $3$-connectivity of $H$, $G$ has a good cycle, a contradiction. Hence $S = \{e_1,e_2\}$. Similarly, if $e_2$ is subdivided at least two times then $G$ has a good cycle, and so it follows that $e_2$ is subdivided exactly once while $e_1$ is subdivided at least twice.  But now the set $X$ consisting of all vertices in $G$ except for the degree two vertices on the subdivided edge $e_1$ contradicts part (ii) of the definition of well-behaved.  This completes the second case and the proof.
\end{proof}

\begin{center}
        \includegraphics[]{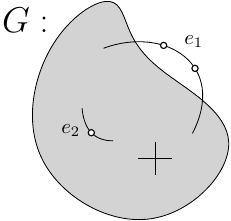}
    \end{center}

\section{Using an unbalanced theta}\label{3ecunbaltheta}

The main goal of this section is to prove Lemma \ref{theBeast}. That lemma will be used to satisfy a parity requirement on our set of cycles in Section \ref{csa}. (In that section, as in \cite{devos2023nowherezero}, we will need the number of even-length negative (not special) cycles in our list to be even). A main difference between this section and the parallel section in \cite{devos2023nowherezero} is that in \cite{devos2023nowherezero} we always found a pair of cycles satisfying our requirements, whereas a family of exception signed graphs (see Definition \ref{fish}) is allowed here. We will also call upon a special case of a theorem by Watkins and Mesner \cite{Watkins_Mesner_1967}, while the parallel section in \cite{devos2023nowherezero} is self-contained.

We say that a pair $(D,Q)$ of subgraphs of $G$ form a \emph{good theta-pair}\index{good theta-pair} if $D$ is a negative cycle, $Q$ is a path with both ends in $V(D)$ but internally disjoint from $V(D)$ and $|V(Q) \cap V_2(G)| \ge 2$.

\begin{lemma}\label{withGoodThetaPair}
    If $G$ is a well-behaved signed graph with a good theta-pair, then there exist a pair of cycles $C_0, C_1$ satisfying the following:
    \begin{enumerate}
        \item $C_1$ is negative and $C_0$ is good
        \item $|V_2(G) \cap V(C_1)| \ge 2$
        \item $G - V(C_0)$ and $G - V(C_1)$ have the same negative cycles.
    \end{enumerate}
\end{lemma}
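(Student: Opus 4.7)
Let $(D,Q)$ be the given good theta-pair. Write $u,v$ for the endpoints of $Q$ on $D$, and let $P_1,P_2$ be the two $u$--$v$-arcs into which $D$ is split by $u$ and $v$. The theta $D\cup Q$ contains three cycles, namely $D=P_1\cup P_2$, $P_1\cup Q$, and $P_2\cup Q$. Since $D$ is negative, the arcs $P_1$ and $P_2$ carry opposite parities of negative edges, and hence exactly one of $P_1\cup Q$, $P_2\cup Q$ is positive. Relabel so that $P_1\cup Q$ is positive. Set
\[
C_0\;:=\;P_1\cup Q \quad\text{and}\quad C_1\;:=\;P_2\cup Q .
\]
Both cycles contain $V(Q)$, which by the definition of a good theta-pair includes at least two vertices of degree two in $G$. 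Thus $C_0$ is a positive cycle with $\ge 2$ degree-2 vertices (so $C_0$ is good), and $C_1$ is a negative cycle with $\ge 2$ degree-2 vertices. Properties (1) and (2) of the lemma follow immediately.

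The content lies in property (3). Observe that $V(C_0)\cap V(C_1)=V(Q)$ while
\[
V(C_0)\triangle V(C_1)\;=\;\bigl(V(P_1)\cup V(P_2)\bigr)\setminus\{u,v\},
\]
so a negative cycle in $G-V(C_1)$ that is not in $G-V(C_0)$ must meet $V(P_1)\setminus\{u,v\}$ while avoiding $V(P_2)\cup V(Q)$ (and symmetrically with $P_1,P_2$ swapped). The plan is to achieve property (3) by taking $(D,Q)$ to be extremal: among all good theta-pairs of $G$, choose one minimizing $|V(D)|$ (or a suitable refinement such as $|V(D)\cap V_3(G)|$). Assume for contradiction that property (3) fails; without loss of generality there is a negative cycle $N$ in $G-V(C_1)$ that contains some internal vertex $w$ of $P_1$. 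Then $N$ avoids $V(P_2)\cup V(Q)$; in particular $N$ avoids $u$ and $v$.

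The contradiction will be produced by rerouting $D$ through $N$ to obtain a shorter negative cycle. Roughly, $P_1\cup N$ sits inside $G-(V(P_2)\cup(V(Q)\setminus\{u,v\}))$, which is $2$-connected in a suitable sense because of well-behavedness, and meets the endpoints $u,v$ of $Q$ together with the two degree-2 vertices guaranteed on $Q$. Here I would invoke the Watkins–Mesner theorem (in the special case the section advertises, of cycles through three prescribed vertices in a $2$-connected graph) to route a new $u$--$v$-path $P_1'$ that exploits $N$ to strictly shorten $P_1$. Since $N$ is negative, $P_1'$ can be chosen so that $D':=P_1'\cup P_2$ is negative, and $(D',Q)$ is again a good theta-pair (the path $Q$ is untouched, so it retains its two degree-2 vertices). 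With appropriate bookkeeping $|V(D')|<|V(D)|$, contradicting minimality.

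The main obstacle is this rerouting step. It requires a careful analysis of how $N$ intersects the theta—$N$ may meet $P_1$ in several arcs and otherwise live in the external branches of degree-3 vertices of $P_1$—and a verification that after rerouting the resulting cycle really is (i) negative, (ii) shorter, and (iii) still disjoint in its interior from~$Q$, so that $(D',Q)$ qualifies as a good theta-pair. The degree conditions on $Q$ guaranteed by the good theta-pair hypothesis, together with well-behavedness applied to subsets involving $V(P_1)\cup V(N)$, are exactly what is needed to make a Watkins–Mesner-style path routing available; the rest is a finite case analysis that I would organize around whether $N$ reaches $V(P_1)$ directly along $P_1$ or via external degree-3 branches.
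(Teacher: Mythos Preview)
Your setup is right: taking $C_0 = P_1 \cup Q$ and $C_1 = P_2 \cup Q$ from the theta $D \cup Q$, properties (1) and (2) fall out immediately, and you have correctly identified what a ``bad'' negative cycle $N$ must look like. The gap is in your extremal argument for (3). Minimizing $|V(D)|$ does not force the rerouting you want. Concretely, suppose $N$ meets $P_1$ only in two adjacent internal vertices $w_1,w_2$ and then makes an arbitrarily long excursion away from $D$ before returning. The only $u$--$v$ paths in $P_1\cup N$ are $P_1$ itself and the long detour through $N$; neither gives a strictly shorter negative cycle through $u$ and $v$, and there is no reason any \emph{other} good theta-pair in $G$ has a smaller negative cycle either. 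Your appeal to Watkins--Mesner does not help: that theorem produces a cycle through three prescribed vertices, it says nothing about shortening paths or controlling signs. The assertion that ``$P_1'$ can be chosen so that $D':=P_1'\cup P_2$ is negative'' is likewise unsupported.

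The clean fix --- and this is what the paper does --- is to reverse the extremal choice: pick $(D,Q)$ with $Q$ \emph{maximal} rather than $D$ minimal. Work in $G' = G \setminus (V(Q)\setminus\{x,y\})$ and let $H$ be the block of $G'$ containing $D$. Any negative cycle witnessing failure of (3) must meet $V(D)\setminus\{x,y\}$; because $G$ is subcubic, such a cycle shares an edge with $D$ and hence lies entirely in $H$, yet it avoids $x$ (since $x\in V(C_0)\cap V(C_1)$). But if $N\subseteq H$ is a negative cycle with $x\notin V(N)$, then $2$-connectivity of $H$ yields two vertex-disjoint paths from $\{x,y\}$ to $V(N)$, internally disjoint from $N$; concatenating these with $Q$ produces a strictly longer path $Q'$ with both ends on $N$ and still containing the two degree-$2$ vertices of $Q$. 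So $(N,Q')$ is a good theta-pair with a longer path, contradicting maximality. No rerouting, no sign-tracking, no case analysis --- the point is that the bad $N$ becomes the \emph{new} negative cycle of the pair, and $Q$ grows, which is exactly the monotone quantity you should have been tracking.
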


\begin{proof}
    Choose a good theta-pair $(D,Q)$ for which $Q$ is maximal. Let $x,y$ be the ends of $Q$ and let $G' = G \setminus (V(Q) - \{x,y\} )$.  Let $H$ be the block of $G'$ containing the cycle $D$.
    
    \begin{figure}[h]
        \centering
        \includegraphics[width=0.25\linewidth]{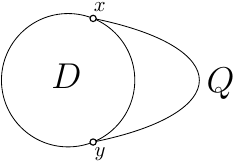}
    \end{figure}
    
      If there exists a negative cycle in $H$ not containing $x$, this cycle may be extended within $H$ to give a good theta-pair with a longer path---a contradiction. Hence every negative cycle in $H$ contains $x$. Let $C_0,C_1$ be the two cycles in the theta graph $D \cup Q$ containing $Q$ so that $C_0$ is positive and $C_1$ is negative.  It follows immediately from our construction that $C_0$ and $C_1$ satisfy all three required properties above.
\end{proof}

\begin{lemma}\label{cyclesContained}
    Let $G$ be an unbalanced, 2-connected, well-behaved signed graph with no good theta-pair and let $C_0$ be a good cycle.  Then there exists a negative cycle $C_1$ with $|V(C_1) \cap V_2(G)| \ge 1$ so that every negative cycle of $G - V(C_1)$ is contained in $G - V(C_0)$.
\end{lemma}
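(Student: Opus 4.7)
My plan is to define $C_1$ by an extremal principle and verify both required properties by contradiction.

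Let $C_1$ be a negative cycle of $G$ maximising $|V(C_1)\cap V(C_0)|$; among those, break ties by choosing one with $|V(C_1)\cap V_2(G)|$ as large as possible. Such a cycle exists because $G$ is unbalanced and finite. Suppose for contradiction that a negative cycle $D$ violates the covering property, i.e.\ $V(D)\cap V(C_0)\neq\emptyset$ but $V(D)\cap V(C_1)=\emptyset$. Pick $v\in V(D)\cap V(C_0)$. Maximality of $C_1$ forces $V(C_1)\cap V(C_0)\neq\emptyset$, so we can find an arc $Q$ of $C_0$ whose interior contains $v$, whose endpoints $w_1,w_2$ lie in $V(C_1)\cap V(C_0)$, and whose interior is disjoint from $V(C_1)$. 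Then $Q$ is a chord-path of $C_1$ of length at least $2$. The two arcs $A,B$ of $C_1$ between $w_1$ and $w_2$, together with $Q$, give a theta whose two non-$C_1$ cycles are $A\cup Q$ and $B\cup Q$; their signs multiply to $\mathrm{sign}(C_1)=-1$, so exactly one of them is negative. Call it $C_1'$.

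The counts are $|V(A\cup Q)\cap V(C_0)|=|V(A)\cap V(C_0)|+|\mathrm{interior}(Q)|$ and similarly for $B$. Since $\mathrm{interior}(Q)$ contains $v$ and is nonempty, as long as the arc of $C_1$ going into $C_1'$ carries at least the "average" share of $V(C_1)\cap V(C_0)$, we get $|V(C_1')\cap V(C_0)|>|V(C_1)\cap V(C_0)|$, contradicting maximality. When parity instead forces the smaller arc into $C_1'$, I would use the vertex $v\in V(D)$ as leverage: splicing a piece of $D$ into the construction (replacing an arc by a path through $D$, exploiting that $D$ meets $C_0$ at $v$) changes parity without losing the intersection count, because the no-good-theta-pair hypothesis bounds the chord-paths of $D$ and so prevents the splice from absorbing the gained $C_0$-vertices. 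The degree-$2$ condition $|V(C_1)\cap V_2(G)|\ge 1$ is handled analogously: if it failed, a chord-path swap between $C_1$ and $C_0$ through a degree-$2$ vertex of $C_0$ (which exists since $C_0$ is good) would yield a negative cycle tying on $V(C_0)$-intersection but gaining a degree-$2$ vertex, contradicting the tie-breaker.

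The main obstacle is parity management: producing in every configuration a simple negative cycle with $V(C_0)$-intersection strictly greater than $|V(C_1)\cap V(C_0)|$, and ensuring that the splice through $D$ does not destroy the gain. This will require careful case analysis on the cyclic arrangement of $V(C_1)\cap V(C_0)$, $V(D)\cap V(C_0)$, and the degree-$2$ vertices of $C_0$ on $C_0$; the no-good-theta-pair hypothesis is the critical tool for ruling out the hard cases, since it is exactly what prevents the subpaths of $C_0$ used in the construction from being "swallowed" by chord-paths of $D$.
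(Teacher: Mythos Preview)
Your proposal has a genuine gap, and you've essentially identified it yourself: the parity management. When you form the theta $C_1\cup Q$, exactly one of $A\cup Q$, $B\cup Q$ is negative, but you have no control over which. If the negative one happens to use the arc of $C_1$ carrying \emph{fewer} $C_0$-vertices than $|\mathrm{int}(Q)|$ replaces, you get no contradiction to maximality. Your fix --- ``splicing a piece of $D$'' to change sign --- is not a construction but a hope: $D$ is disjoint from $C_1$, so routing through $D$ alters $Q$, not the arc of $C_1$ whose sign you need to flip, and there is no argument that the no-good-theta-pair hypothesis forces the splice to preserve the $C_0$-intersection count. The tie-breaker argument for $|V(C_1)\cap V_2(G)|\ge 1$ has the same problem: a ``chord-path swap through a degree-$2$ vertex of $C_0$'' is not specified, and there is no reason such a swap should preserve both sign and $|V(C_1)\cap V(C_0)|$.

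The paper's proof avoids all of this by constructing $C_1$ explicitly rather than extremally. Since $G$ is $2$-connected and unbalanced, some ear $P$ with ends $x,y\in V(C_0)$ makes $C_0\cup P$ an unbalanced theta. The no-good-theta-pair hypothesis then does real work immediately: it forces $x,y$ to interleave the two degree-$2$ vertices $a,b$ of $C_0$ and forces $V(C_0)\cap V_2(G)=\{a,b\}$. Among all such ears, choose $P$ so that the $xy$-arc $Q$ of $C_0$ through $a$ is minimal; take $C_1$ to be the negative cycle of $C_0\cup P$ through $b$. Then $V(C_0)\setminus V(C_1)$ is exactly $\mathrm{int}(Q)$, and any negative cycle $N$ avoiding $C_1$ but meeting $C_0$ must meet $\mathrm{int}(Q)$, which yields an ear $P'$ with ends strictly inside $Q$ making $C_0\cup P'$ unbalanced --- contradicting the minimality of $Q$. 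This argument is a few lines and never needs to chase parity through a general extremal configuration. The lesson is that the hypothesis ``no good theta-pair'' together with ``$C_0$ is good'' already pins down the local picture near $C_0$ so tightly that $C_1$ can be written down directly.
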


\begin{proof}
    Let $a,b \in V(C_0) \cap V_2(G)$ be distinct. Because $G$ is 2-connected and unbalanced, there must exist a path $P \subseteq G$ that has both ends in $V(C_0)$, is internally disjoint from $V(C_0)$, and so that $H = P \cup C_0$ is an unbalanced theta. Let $x,y \subseteq V(C_0)$ be the ends of $P$. Because $G$ does not have a good theta-pair it must be that $x,y$ interleave $a,b$ on $C_0$, and $V(C_0) \cap V_2(G) = \{a,b\}$. Let $Q \subseteq C_0$ be the $x,y$-path in $C_0$ which contains $a$, and suppose we have chosen $P$ so that $Q$ is minimal. Let $C_1$ be the negative cycle in $C_0 \cup P$ containing $b$. 

    \begin{figure}[h]
        \centering
        \includegraphics[width=0.35\linewidth]{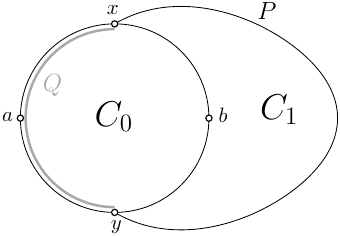}
    \end{figure}
        
    The vertex $b$ witnesses that $|V(C_1) \cap V_2(G)| \ge 1$.  To complete the proof, suppose (for contradiction) that $N$ is a negative cycle in $G - V(C_1)$ not contained in $G - V(C_0)$.  In this case, $N$ must intersect $Q - \{x,y\}$.  This means there is a path $P' \subseteq G$ with ends in $Q - \{x,y\}$ so that $P' \cup C_0$ makes an unbalanced theta. But this contradicts our choice of $P$.
      \end{proof}

We call a signed subcubic graph $G$ \emph{fragile} if $G$ contains an unbalanced theta, but for every good cycle $C$ the graph $G - V(C)$ has no unbalanced theta.
\begin{lemma}\label{fragileNegCyc}
    Let $G$ be a 2-connected, fragile, well-behaved signed graph with no good theta-pair, and let $C \subseteq G$ be a good cycle. Then every negative cycle $N$ in $G - V(C)$ has $|V_3(G) \cap V(N)| = 2$ and $|V_2(G) \cap V(N)| \ge 1$. 
\end{lemma}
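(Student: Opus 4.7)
The plan is to suppose some negative cycle $N \subseteq G - V(C)$ violates the conclusion and derive a contradiction by exhibiting either an unbalanced theta in $G - V(C)$ (contradicting fragility) or a good theta-pair in $G$ (contradicting the hypothesis). First I would observe that, since $N$ is negative, any path $P$ in $G - V(C)$ with both ends on $V(N)$ at distinct vertices and internally disjoint from $V(N)$ combines with $N$ into a theta in $G - V(C)$ whose two new cycles have signs multiplying to $\mathrm{sign}(N) = -1$; hence one of them is negative and the theta is unbalanced, contradicting fragility. Therefore each component of $(G - V(C)) - V(N)$ has at most one neighbor in $V(N)$, and since $G$ is 2-edge-connected each such component has at least one edge to $V(C)$, making $G - V(N)$ connected.

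The lower bound $|V_3(G) \cap V(N)| = d_G(V(N)) \ge 2$ is immediate from 2-edge-connectivity, so I need only rule out $d_G(V(N)) \ge 3$. Assume otherwise and pick distinct $u_1, u_2, u_3 \in V(N)$ with edges $f_i = u_i v_i$ leaving $N$. If two $v_i$'s lay in the same component of $(G - V(C)) - V(N)$, that component would have two neighbors in $V(N)$, which is forbidden; thus the $v_i$'s lie in three distinct ``regions'' $R_1, R_2, R_3$, each of which is either $V(C)$ itself or a distinct component of $(G - V(C)) - V(N)$. For each $i$ I would assign a marker $p_i \in V(C)$ by setting $p_i = v_i$ when $v_i \in V(C)$ and otherwise choosing $p_i$ to be an attachment point of $R_i$ to $V(C)$. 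Because $G$ is subcubic, every vertex of $V(C)$ has at most one edge leaving $V(C)$, which forces $p_1, p_2, p_3$ to be three pairwise distinct degree-$3$ vertices of $V(C)$.

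The markers $p_1, p_2, p_3$ partition $V(C)$ into three arcs $A_{12}, A_{23}, A_{31}$; let $b_{ij}$ denote the number of $G$-degree-$2$ vertices in the interior of $A_{ij}$. Since $C$ is a good cycle, $b_{12} + b_{23} + b_{31} = |V_2(G) \cap V(C)| \ge 2$, so a short case-check shows that either some single $b_{ij} \ge 2$ or two consecutive arcs $A_{ij}, A_{jk}$ satisfy $b_{ij} + b_{jk} \ge 2$. In the first case I build a path $Q$ from $u_i$ to $u_j$ by concatenating the edge $f_i$, a path inside $R_i \cup \{p_i\}$ from $v_i$ to $p_i$, the arc $A_{ij}$, a symmetric path from $p_j$ to $v_j$, and the edge $f_j$; in the second case I take the analogous path from $u_i$ to $u_k$ traversing both $A_{ij}$ and $A_{jk}$ through $p_j$. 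Either $Q$ joins two vertices of $V(N)$, is internally disjoint from $V(N)$, and contains at least two $G$-degree-$2$ vertices, so $(N, Q)$ is a good theta-pair, contradicting the hypothesis. Hence $d_G(V(N)) = 2$, giving $|V_3(G) \cap V(N)| = 2$. Applying well-behaved property (i) with $X = V(N)$ then yields $3 \le 2 + |V_2(G) \cap V(N)|$, so $|V_2(G) \cap V(N)| \ge 1$.

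The main obstacle I expect is the bookkeeping in the path-construction step: verifying that the markers $p_1, p_2, p_3$ are genuinely distinct under all configurations of regions (direct attachment to $V(C)$ versus via components, and components potentially attaching to $V(C)$ at multiple vertices), and that the pieces of $Q$ inside the regions $R_i$ remain internally disjoint from $V(N)$. Subcubicity of $G$ together with the single-neighbor property of each component of $(G - V(C)) - V(N)$ should handle most of these subtleties.
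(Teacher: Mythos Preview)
Your proposal is correct and follows essentially the same line as the paper's proof: both arguments use fragility to see that $N$ acquires no ear inside $G-V(C)$, deduce that three exits from $N$ yield three vertex-disjoint routes landing on three distinct vertices of $C$, and then exploit the two degree-$2$ vertices of the good cycle $C$ on one (or two consecutive) of the resulting arcs to manufacture a good theta-pair. The paper packages the first part via the block--cut tree of $G-E(C)$ (where $N$ is a block and the leaf-blocks are pendant edges into $V(C)$), while you do it through the component structure of $(G-V(C))-V(N)$; these are interchangeable implementations of the same idea. The bookkeeping obstacle you anticipate does resolve cleanly: subcubicity forces each vertex of $C$ to have at most one edge leaving $V(C)$, so no two of your markers $p_i$ can coincide (whether they arise as direct neighbours $v_i\in V(C)$ or as attachment points of distinct components), and the paths through distinct components are automatically internally disjoint from each other and from $V(N)$.
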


\begin{proof}
Let $H$ be a component of $G' = G - E(C)$ with a negative cycle $N$. We will consider the block structure of $H$. Note that because $G$ is fragile, $N$ is itself a block of $H$. Because $G$ is subcubic and $2$-connected, every leaf-block of $H$ is a single edge with a vertex in $V(C)$. This means that if $|V(N) \cap V_3(G)| \ge 3$, then there would be three vertex-disjoint paths from $V(N)$ to $V(C)$ as in Figure \ref{fig:frag1}. But then using two of those paths, there would be a good theta-pair containing $N$ and a path $Q$ with two degree-2 vertices on $C$, a contradiction. Thus $|V(N) \cap V_3(G)| \le 2$. Since at least two degree-3 vertices are required by 2-connectivity, it must be that $|V(N) \cap V_3(G)| =2$. But then because $G$ is well-behaved it follows that $|V(N) \cap V_2(G)| \ge 1$.

    \begin{figure}[h]
        \centering
        \includegraphics[width=0.35\linewidth]{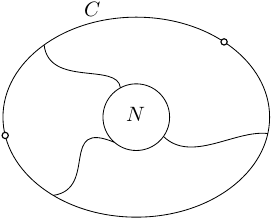}
        \label{fig:frag1}
    \end{figure}
\end{proof}

\begin{lemma}\label{atMost2negCyc}
    If $G$ is a $2$-connected, fragile, well-behaved signed graph with no good theta-pair, then for every good cycle $C$ the graph $G - V(C)$ has at most two negative cycles.
\end{lemma}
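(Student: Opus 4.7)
The plan is to suppose for contradiction that $H := G - V(C)$ contains three distinct negative cycles $N_1, N_2, N_3$, and then to derive an unbalanced theta in $H$, contradicting fragility. By Lemma~\ref{fragileNegCyc}, each $N_i$ has exactly two degree-$3$ vertices $x_i, y_i \in V(N_i)$ and at least one vertex $z_i \in V_2(G) \cap V(N_i)$.

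First I would pin down the structural consequences of fragility. Any two distinct negative cycles in $H$ are vertex-disjoint: a shared edge would yield a theta in $H$ containing a negative cycle (hence unbalanced), and a shared single vertex would have degree $\ge 4$, violating subcubicity. In fact each $N_i$ is a block of $H$ all by itself, for if $N_i$ sat inside a strictly larger $2$-connected block $B$ of $H$, then $B$ would contain another cycle $N'$ and a Menger argument in $B$---via a shared edge, or via two internally disjoint paths from $\{x_i, y_i\}$ to $V(N')$---would produce an unbalanced theta at $\{x_i, y_i\}$ in $H$. The same Menger reasoning shows that any two of $N_1, N_2, N_3$ have at most one internally disjoint path between them in $H$, because two such paths together with a half of $N_i$ and an arc of $N_j$ would again form an unbalanced theta.

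The key step is to invoke the Watkins--Mesner theorem on the $2$-connected graph $G$ with the three vertices $z_1, z_2, z_3$ (handling any Watkins--Mesner $2$-separator obstruction as a side case). This yields a cycle $Z \subseteq G$ through $z_1, z_2, z_3$. Because each $z_i$ has degree $2$ in $G$ with both of its incident edges on $N_i$, the cycle $Z$ must traverse the subpath $\rho_i \subseteq N_i$ from $x_i$ to $y_i$ passing through $z_i$; the remainder of $Z$ consists of three arcs joining the $\rho_i$'s cyclically. If $Z$ happens to lie in $H$, then $Z \cup N_1$ is a theta in $H$ at $\{x_1, y_1\}$ whose three internally disjoint paths are $\rho_1$, $N_1 \setminus \mathrm{int}(\rho_1)$, and $Z \setminus \mathrm{int}(\rho_1)$. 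The first two paths together form the negative cycle $N_1$, so this theta is unbalanced---contradicting fragility.

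The main obstacle is the remaining case, in which $Z$ necessarily meets $V(C)$. My plan is then to replace $C$ by a new good cycle $C'$: pick an index $i$ and splice $C$ with a half of $N_i$ using the two attachment edges $a_i, b_i$ (the out-edges at $x_i, y_i$), choosing arcs and signs so that $C'$ is both positive (possible since flipping the chosen $N_i$-half toggles the sign) and good (its two degree-$2$ vertices coming from $V_2(G)\cap V(C)$ and from $V_2(G)\cap V(N_i)$, whose existence is guaranteed by the good-cycle hypothesis on $C$ and by Lemma~\ref{fragileNegCyc}, after a case analysis driven by the well-behaved inequality). Then the remaining two negative cycles $N_j, N_k$ (with $j, k \neq i$) live in $G - V(C')$, now together with the arcs of $Z$ that had been forced through $V(C)$; those arcs should provide the extra connectivity needed to produce an unbalanced theta in $G - V(C')$, contradicting fragility applied to $C'$. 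The technical heart of the argument, and the place I expect to spend most of the effort, is verifying that a suitable $C'$ can always be built and that the resulting structure genuinely is an unbalanced theta.
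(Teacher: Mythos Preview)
Your approach diverges substantially from the paper's, and the divergence is exactly where the argument breaks down. The case $Z\subseteq G-V(C)$ is fine, but that case is essentially vacuous here: the whole difficulty is that the three negative cycles $N_i$ connect to the rest of $G$ only through $C$ (each $N_i$ is a block of $G-E(C)$ with just two attaching paths to $C$, as the paper shows), so a cycle through $z_1,z_2,z_3$ will almost always be forced through $V(C)$. Your ``splicing'' plan for that main case does not work as stated: the out-edges at $x_i,y_i$ need not land on $C$ (they land on paths in $G-E(C)$ that eventually reach $C$), so there is no canonical way to graft a half of $N_i$ onto $C$ to obtain a new good cycle $C'$; and even if some $C'$ could be built, you have not shown that the arcs of $Z$ lying outside $V(C')$ assemble into an unbalanced theta with $N_j\cup N_k$. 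You have also waved away the Watkins--Mesner obstruction outcome, which is not obviously benign. Most tellingly, you make no real use of the hypothesis ``no good theta-pair'' beyond invoking Lemma~\ref{fragileNegCyc}; that hypothesis is doing the heavy lifting in this lemma and an argument that ignores it is unlikely to close.

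The paper's proof is much more direct and does not use Watkins--Mesner at all. It first shows that each component of $G-E(C)$ contains at most one negative cycle (else a good theta-pair). Then, for each $N_i$, it takes two vertex-disjoint paths $H_i$ from $N_i$ to $C$, with attachment points $x_i,y_i\in V(C)$; the $H_i$ are pairwise disjoint. Now the hypothesis ``no good theta-pair'' forces, for the two degree-$2$ vertices $a,b\in V(C)$: each pair $x_i,y_i$ interleaves $a,b$ on $C$, and no two pairs $\{x_i,y_i\},\{x_j,y_j\}$ interleave each other. This pins down the cyclic order $a,x_1,x_2,x_3,b,y_3,y_2,y_1$, and then $H_2\cup H_3\cup C$ contains a good cycle disjoint from the unbalanced theta in $H_1\cup C$, contradicting fragility. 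I would recommend abandoning the Watkins--Mesner route and instead analysing how the $N_i$ attach to $C$, using ``no good theta-pair'' at each step.
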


\begin{proof}
     Let $G' = G - E(C)$. By Lemma \ref{fragileNegCyc}, every negative cycle $D$ in $G'$ has $|V_3(G) \cap V(D)| = 2$ and $|V_2(G) \cap V(D)| \ge 1$.

    \begin{figure}[H]
        \centering
        \includegraphics[width=0.35\linewidth]{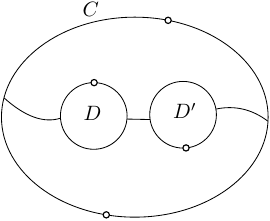}
        \label{fig:f1}
    \end{figure}

    If a component $H$ of $G'$ contained two negative cycles $D,D'$, then there would be a good theta-pair using $D$ and a path $Q$ containing a degree-2 vertex from $D'$ and a degree 2 vertex from $C$, a contradiction. Hence every component of $G'$ has at most one negative cycle.

    Now, suppose for contradiction that $G'$ has three negative cycles, say $D_1, D_2, D_3$. By 2-connectivity of $G$, for $1 \le i \le 3$ choose a pair of vertex disjoint paths from $V(D_i)$ to $V(C)$ and let $H_i$ be the subgraph of $G$ made by the union of $D_i$ and the two paths associated with it. Observe that by the previous paragraph, $H_1, H_2, H_3$ are pairwise vertex disjoint. For $1 \le i \le 3$ let $\{x_i,y_i\} = V(C) \cap V(H_i)$. Note that $H_i$ contains paths between $x_i$ and $y_i$ of both signs, and that at least one of those two paths contains a vertex in $V_2(G)$.

    \begin{figure}[H]
        \centering
        \includegraphics[width=0.7\linewidth]{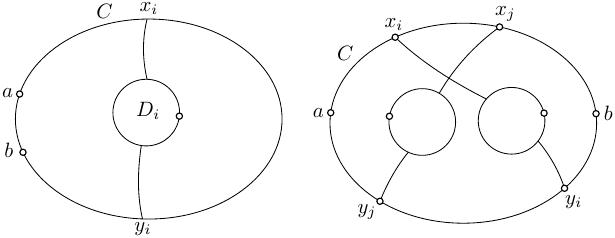}
        \label{fig:f34}
    \end{figure}

    Now, because $C$ is good, there exist $a,b \in V_2(G) \cap C$. It must be that, for each $1 \le i \le 3$, the vertices $x_i, y_i$ interleave $a,b$ on $C$, because otherwise there would be a good theta-pair using $D_i$ and a path $Q \subseteq H_i \cup C$ which contains $a,b$ as in the left side of the above figure. It also must be that $x_i,y_i$ do not interleave $x_j,y_j$ for $i \neq j$, because otherwise there would be a good theta-pair using $D_i$ and a path $Q\subseteq H_i \cup H_j \cup C$ containing a vertex in $V_2(G) \cap V(D_j)$, and a vertex in $V_2(G) \cap V(C)$, as in the right side of the above figure.

    \begin{figure}[H]
        \centering
        \includegraphics[width=0.38\linewidth]{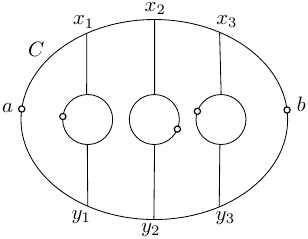}
        \label{fig:fragile5}
    \end{figure}

    Hence it follows (wlog) that the vertices $a,x_1,x_2,x_3,b,y_3,y_2,y_1$ occur in that cyclic order around $C$. But now $H_1 \cup C$ has an unbalanced theta while $H_2 \cup H_3 \cup C$ has a theta with a degree-2 vertex on each branch ($x_3$ and $y_3$ are the vertices of degree 3 in that theta), and so that these subgraphs are disjoint. Since the later subgraph contains a good cycle, this contradicts that $G$ is fragile and completes the proof.
\end{proof}

For the next lemma, we define the family of exception signed graphs mentioned at the start of this section.

\begin{definition}\label{fish}
    We say a signed graph $G$ is a \emph{fish} if $G$ is formed by taking the union of a theta $H$ whose branches have length two, two, and three; and a path $P$ of odd-length at least three which is internally disjoint from $H$ but whose ends are the adjacent degree-2 vertices of $H$. And the signature of $G$ is such that $H$ is balanced but $H \cup P$ is unbalanced. See Figure \ref{fig:fish}.
\end{definition}

\begin{figure}[H]
    \centering
    \includegraphics{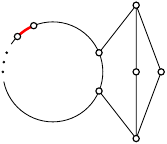}
    \caption{A family of \emph{fish} signed graphs: signed graphs having an equivalent signature with a single negative edge shown in red. The three black dots represent an even number (zero or more) of consecutive degree-two vertices.}
    \label{fig:fish}
\end{figure}

We also require the following theorem of Watkins and Mesner \cite{Watkins_Mesner_1967}. 

\begin{theorem}[\cite{Watkins_Mesner_1967}]\label{thm:mw}
    Let $G$ be a $2$-connected subcubic graph and let $x_1,x_2,x_3 \in V(G)$ be distinct. Either there exists a cycle $C \subseteq G$ with $x_1,x_2,x_3 \in V(C)$ or there is a partition of $V(G)$ into $\{Y_1,Y_2,X_1,X_2,X_3\}$ with $x_i \in X_i$ for $1 \le i \le 3$ satisfying 
    \begin{itemize}
        \item $e(X_i,Y_j) = 1$ for $1 \le i \le 3$ and $1 \le j \le 2$, and
        \item $e(Y_1,Y_2) = 0 = E(X_i,X_j)$ for $1 \le i < j \le 3$.
    \end{itemize}
\end{theorem}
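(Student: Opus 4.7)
The plan is to proceed by induction on $|V(G)|$, exploiting the fact that in a subcubic graph any 2-vertex cut $\{u,v\}$ separates $G$ into at most three components: each component must contain a neighbour of both $u$ and $v$ (by 2-connectivity), and $\deg(u),\deg(v)\le 3$.

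If $G$ is 3-connected, I would invoke the classical theorem of Dirac that any three vertices in a 3-connected graph lie on a common cycle, producing the desired cycle through $x_1,x_2,x_3$. Otherwise, choose a 2-cut $\{u,v\}$ and let $G_1,\dots,G_k$ with $k\in\{2,3\}$ be the components of $G-\{u,v\}$. The argument then splits into cases based on how $x_1,x_2,x_3$ distribute among the $G_i$.

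The cleanest case is $k=3$ with one $x_i$ in each $G_i$: here $u$ and $v$ each have exactly one edge to each $G_i$ and no other edges, so any cycle could meet at most two of the three components, which forces the desired partition by setting $X_i:=V(G_i)$, $Y_1:=\{u\}$ and $Y_2:=\{v\}$. In all other cases, some component $G_j$ contains at least two of the $x_i$, and I would reduce to a graph $G'$ built from $G[V(G_j)\cup\{u,v\}]$ by replacing the other components with a suitable virtual $uv$-path (or gadget) that preserves 2-connectivity together with the subcubic condition. Applying the inductive hypothesis to $G'$ with the relevant three vertices yields either a cycle, which lifts to a cycle in $G$ by rerouting through the omitted components, or a partition, which lifts to a partition of $G$ by absorbing the omitted components into $Y_1$ and $Y_2$.

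The main obstacle is the careful realisation of this reduction: ensuring that the virtual $uv$-path does not break the subcubic degree bound (which may require contracting rather than adding edges), handling the edge cases in which $u$ or $v$ coincides with some $x_i$, and checking that any partition returned by induction lifts correctly. A possibly cleaner alternative is to argue directly from Menger's theorem: start with a cycle $C$ through $x_1$ and $x_2$ together with two internally disjoint paths $P_1,P_2$ from $x_3$ to $V(C)$; the attempt to splice them into a cycle through all three succeeds unless the endpoints of $P_1,P_2$ on $C$ interleave $x_1$ and $x_2$, and analysing that obstruction across all choices of $C$ and $P_1,P_2$ produces the required $(Y_1,Y_2,X_1,X_2,X_3)$ partition.
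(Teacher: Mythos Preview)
The paper does not prove this theorem at all: it is quoted verbatim as a result of Watkins and Mesner \cite{Watkins_Mesner_1967} and used as a black box in the proof of Lemma~\ref{disjNegcycfromGood}. So there is no argument in the paper to compare your proposal against.

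As for the proposal itself, what you have written is a plausible outline rather than a proof, and you say as much. The base case via Dirac's theorem is fine, and the $k=3$ case with the $x_i$ in distinct components is exactly the partition structure. But the inductive reduction when two of the $x_i$ lie in the same piece is left entirely at the level of intentions: you have not specified the gadget, you have not verified it stays subcubic, you have not said which three vertices you apply induction to when one of $u,v$ must stand in for an $x_i$ on the far side, and you have not checked that a returned partition of $G'$ actually lifts (in particular, why the virtual path vertices land in $Y_1\cup Y_2$ rather than in some $X_i$). Your alternative Menger-based suggestion has the same status: the splicing argument is standard when it succeeds, but extracting the five-part partition from the failure case across \emph{all} choices of $C,P_1,P_2$ is the entire content of the theorem and you have not indicated how to do it. Either route can be made to work, but neither is close to complete as written.
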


\begin{figure}[H]\label{fig:chooseParity}
    \centering
    \includegraphics{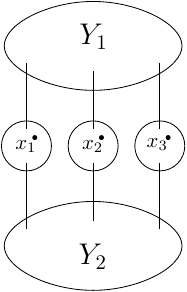}
    \label{fig:wm}
\end{figure}

\begin{lemma}\label{disjNegcycfromGood}
    Let $G$ be a 2-connected, fragile, well-behaved signed graph with no good theta-pair, and assume that there exists in $G$ a good cycle disjoint from some even-length negative cycle.  Then either $G$ is isomorphic to fish, or there exist cycles $C_0,C_1$ satisfying the following:
         \begin{enumerate}
        \item $C_1$ is negative and $C_0$ is good
        \item $|V_2(G) \cap V(C_1)| \ge 1$
        \item $G - V(C_0)$ and $G - V(C_1)$ have the same negative cycles.
        \end{enumerate}
\end{lemma}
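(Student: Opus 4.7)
The plan is to construct $C_0$ and $C_1$ as cycles in a theta-like configuration built from $C$, $D$, and two connecting paths; the fish exception arises in a tight case where a certain theta in $G$ turns out to be balanced.

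I would begin by invoking Lemma \ref{fragileNegCyc}: the negative cycle $D$ has exactly two degree-3 vertices $u, v$ in $G$ and at least one degree-2 vertex, so since $|E(D)|$ is even this forces $|E(D)| \geq 4$. Interior vertices of $D$ have degree 2 with both incident edges in $D$, so 2-connectivity yields vertex-disjoint paths $P_u, P_v$ from $u, v$ to distinct vertices $x, y \in V(C)$, internally disjoint from $V(C) \cup V(D)$. Split $D$ at $\{u,v\}$ into $u$-$v$ paths $D_1, D_2$, and split $C$ at $\{x,y\}$ into arcs $C_a, C_b$ with $a \in V(C_a) \cap V_2(G)$ and $b \in V(C_b) \cap V_2(G)$ (possible since $C$ is good). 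For $? \in \{a,b\}$ and $i \in \{1,2\}$, set $K_i^? := D_i \cup P_u \cup C_? \cup P_v$; since $\mathrm{sign}(K_1^?)\cdot\mathrm{sign}(K_2^?) = \mathrm{sign}(D) = -1$, exactly one is positive. Take $C_0$ to be the positive one and $C_1$ the negative one.

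The vertex $? \in \{a, b\}$ provides a degree-2 vertex on $C_1$, giving condition (2). For condition (3), observe that $V(C_0) \triangle V(C_1) = V(D) \setminus \{u, v\}$ consists of degree-2 vertices whose neighbors all lie in $V(D)$; since both $u, v$ are deleted in $G - V(C_0)$ and $G - V(C_1)$, the interior of $D$ becomes dangling paths in each residual graph and supports no cycle. Hence $G - V(C_0)$ and $G - V(C_1)$ have the same cycle structure outside $V(C_0) \cap V(C_1)$, and their negative cycles coincide. For condition (1), $C_0$ contains $?$ as one degree-2 vertex and a second such vertex must come from the interior of the chosen $D_i$, of $P_u$, of $P_v$, or from $V(C_?) \setminus \{x, y, ?\}$. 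The construction succeeds unless the half of $D$ making $C_0$ positive is a single edge, $P_u$ and $P_v$ are single edges, and $|V(C_a)| = |V(C_b)| = 3$. I would argue this tight configuration only obstructs us when the theta $\Theta := C \cup (P_u \cup D_1 \cup P_v)$ is balanced: if $\Theta$ were unbalanced, the "long" half $D_2$ (which has length at least 3 in the tight case) would yield the positive $C_0$ and hence abundant degree-2 vertices.

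In the tight-balanced case, I would verify $G$ matches the fish definition: since $a, b$ are degree-2 in $G$ they have no out-edges, so the only attachments of $G - V(C)$ to $V(C)$ are $\{xu, yv\}$ and $G - V(C)$ is the single component $D \cup P_u \cup P_v$; this component has no extra internal structure because $P_u, P_v$ are edges. The theta $\Theta$ has branches of lengths 2, 2, 3 with $u, v$ the adjacent degree-2 vertices of its length-3 branch, and $P := D_2$ is a $u$-$v$ path of odd length $\geq 3$ internally disjoint from $\Theta$ such that $\Theta$ is balanced (our hypothesis) while $\Theta \cup P = G$ is unbalanced via the negative cycle $D$. The main obstacle will be handling richer configurations, especially when $G - V(C)$ has a second negative cycle $D'$ in another component (sub-case 2 of Lemma \ref{atMost2negCyc}); this forces additional attachments $x', y' \in V(C)$ so $|V(C)| \geq 6$ and typically supplies the required second degree-2 vertex on $C_a$ or $C_b$. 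When the configuration becomes delicate, I anticipate invoking the Watkins--Mesner theorem (Theorem \ref{thm:mw}) on $G$ with three carefully chosen vertices (such as $x, y, x'$) either to produce an alternate positive cycle $C_0$ through them with many degree-2 vertices, or to force a tripod decomposition that contradicts well-behavedness and rules out the configuration.
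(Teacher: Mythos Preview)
Your theta construction and the verification of conditions (2) and (3) are correct: once $u,v$ are removed, the interiors of $D_1,D_2$ dangle, so $G-V(C_0)$ and $G-V(C_1)$ have identical cycle structure.  The gap is in condition (1) and the fish identification.  You claim the construction fails only when $D_i$ is a single edge, $P_u,P_v$ are single edges, and $|V(C_a)|=|V(C_b)|=3$; but in fact it fails whenever the interior vertices of $D_i$, $P_u$, $P_v$, and $C_?\setminus\{?\}$ all have degree~3 in $G$.  Nothing prevents $P_u$ from being a long path whose interior vertices are all degree~3, or $C$ from having many degree-3 vertices besides $x,y$.  In those cases $G$ is strictly larger than a fish, and your assertion that ``the only attachments of $G - V(C)$ to $V(C)$ are $\{xu, yv\}$'' is false.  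Your proposed Watkins--Mesner invocation on $\{x,y,x'\}$ does not obviously help, since those are degree-3 vertices and a cycle through them need not be good.

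The paper handles this more cleanly by two moves you are missing.  First, rather than taking paths from $u,v$ to $C$, it fixes a degree-2 vertex $z\notin V(N)$ and takes edge-disjoint paths $P_1,P_2$ from $z$ to the two degree-3 vertices $x,y$ of $N$.  The connecting branch of the resulting theta then automatically carries $z\in V_2(G)$, so the only way $C_0$ fails to be good is if the positive arc of $N$ has no interior---i.e.\ $x,y$ are adjacent in $N$.  This collapses your messy ``tight case'' to a single clean hypothesis.  Second, after showing $G-E(Q)$ is balanced (else there is a good theta-pair via the long arc $Q$), the paper contracts $N$ to a vertex $x_1$ and applies Watkins--Mesner to $\{x_1,x_2,x_3\}$ where $x_2,x_3$ are degree-2 vertices of $C$.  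A cycle through all three would lift to a good theta-pair in $G$, so the tripod partition is forced; well-behavedness and fragility then pin $X_2,X_3,Y_1,Y_2$ each to a single vertex and $G[X_1]$ to $N$ itself, giving exactly the fish.  The contraction of $N$ before Watkins--Mesner, together with the choice of degree-2 (not degree-3) test vertices, is what makes the global structure fall out.
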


\begin{proof}
  Choose a good cycle $C$ and an even-length negative cycle $N$ for which $V(C) \cap V(N) =~\emptyset$. By Lemma \ref{fragileNegCyc}, let $\{x,y\} = V_3(G) \cap V(N)$. Note, of course, that this means there is a 2-edge cut separating $V(N)$ and $V(G) \setminus V(N)$. Again by Lemma \ref{fragileNegCyc}, $|V_2(G) \cap V(N)| \ge 1$, but since $N$ has even-length it must be that $|V_2(G) \cap V(N)| \ge 2$. By 2-connectivity, we may choose two edge-disjoint paths $P_1, P_2$ from a vertex in $V_2(G) \setminus V(N)$ and $x,y$. 
  This means that if there was a degree-2 vertex on both of the $x,y$-paths in $N$, then $N \cup P_1 \cup P_2$ would be a theta which contains a pair of cycles satisfying the lemma.  Hence we may assume $x$ and $y$ are adjacent in $N$, and we let $Q \subseteq N$ be the longest $x,y$-path in $N$.  Note that $|V(Q) \cap V_2(G)| \ge 2$.

    \begin{figure}[h]
      \centering
      \includegraphics[width=0.3\linewidth]{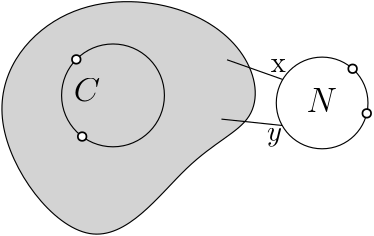}
      \label{fig:enter-label}
  \end{figure}
  
  If $G - E(Q)$ has a negative cycle $N'$, then by 2-connectivity there is a good theta-pair using $N'$ and a path containing $Q$, a contradiction. Hence $G - E(Q)$ is balanced.
  In particular, $G$ can be signed with a single negative edge contained in $E(Q)$.

    
    Now, modify $G$ to form $G'$ by identifying $N$ to a new (degree-2) vertex called $x_1$.  Let $x_2,x_3$ be distinct degree-two vertices in $V(C)$.  The graph $G'$ cannot have a cycle containing $x_1, x_2, x_3$ as this would give a good theta-pair (contradicting our assumption).  Therefore, by applying Theorem \ref{thm:mw} to the graph $G'$ we deduce that the original graph $G$ has a vertex partition $\{Y_1,Y_2,X_1,X_2,X_3\}$ with $N \subseteq G[X_1]$, $x_i \in X_i$ for $2 \le i \le 3$, and so that 
    $e(X_i,Y_j) = 1$ for $1 \le i \le 3$, $1 \le j \le 2$, and $e(Y_1,Y_2) = 0 = E(X_i,X_j)$ for $1 \le i < j \le 3$. Choose this structure so that $|V(Y_1) \cup V(Y_2)|$ is maximum. This means that each $X_i$ is 2-connected or a single vertex. 

    For $ i = 2,3$, it must be that $|V_2(G) \cap X_i| = 1$, because otherwise there would be a good cycle in $X_i$ contradicting that $G$ is fragile. Because $G$ is well-behaved, this means that $X_2 = \{x_2\}$, and $X_3 = \{x_3\}$. If $N \neq G[X_1]$, then $X_1$ contains an unbalanced theta which is disjoint from a good cycle using $x_2, x_3$, contradicting that $G$ is fragile. Hence $G[X_1] = N$.
    
    Similarly, for $i = 1,2$ if $Y_i \cap V_2(G) \neq \emptyset$, then $G$ has a good theta-pair using $N$ and a path containing a degree-2 vertex in $Y_i$ and one of the degree-2 vertices in $X_2$ or $X_3$. Because $G$ is well-behaved this means each $Y_i$ is also a single vertex. But now the graph $G$ is isomorphic to fish, which completes the proof.
    \end{proof}

With the above lemmas in place, we can now prove the following, which we will use going forward.

\begin{lemma}\label{theBeast}
    Let $G$ be a 2-connected, fragile, well-behaved signed graph. Then either $G$ is isomorphic to fish, or $G$ has a pair of cycles $C_0,C_1$ satisfying the following:
     \begin{enumerate}
        \item\label{g1} $C_1$ is negative and $C_0$ is good
        \item\label{g2} $G - V(C_1)$ has no unbalanced theta
        \item\label{g3} $G - V(C_0)$ and $G - V(C_1)$ have the same even length negative cycles.
        \item\label{g4} \begin{enumerate}
            \item $|V_2(G) \cap V(C_1)| \ge 2$ or
            \item $|V_2(G) \cap V(C_1)| \ge 1$ and the graph $G - V(C_1)$ has at most two negative cycles
            \end{enumerate}
    \end{enumerate}
\end{lemma}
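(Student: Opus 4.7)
My plan is to split on whether $G$ has a good theta-pair and, in the no-theta-pair case, on whether some good cycle of $G$ is disjoint from some even-length negative cycle. In each of the three resulting cases an appropriate helper lemma produces $C_0, C_1$; the four properties are then checked.

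First, if $G$ has a good theta-pair, apply Lemma~\ref{withGoodThetaPair} to obtain $C_0, C_1$ satisfying \eqref{g1}, the stronger same-negative-cycles property (which implies \eqref{g3}), and $|V_2(G) \cap V(C_1)| \geq 2$, namely \eqref{g4}(a).

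Suppose instead $G$ has no good theta-pair. If some good cycle of $G$ is disjoint from some even-length negative cycle, apply Lemma~\ref{disjNegcycfromGood}: either $G \cong$ fish and we are done, or we obtain $C_0, C_1$ satisfying \eqref{g1}, $|V_2(G)\cap V(C_1)|\geq 1$, and the same-negative-cycles property, giving \eqref{g3}. For \eqref{g4}, if $|V_2(G) \cap V(C_1)| \geq 2$ then part~(a) holds; otherwise Lemma~\ref{atMost2negCyc} bounds the negative cycles of $G - V(C_0)$ by two, which transfers to $G-V(C_1)$ by the same-negative-cycles property, yielding part~(b). If instead every good cycle meets every even-length negative cycle, then $G$ is not a single cycle (as fragility forces $G$ to contain an unbalanced theta), so Lemma~\ref{usable_cycle} supplies a good cycle $C_0$, and Lemma~\ref{cyclesContained} then yields a negative cycle $C_1$ with $|V_2(G)\cap V(C_1)|\geq 1$ and every negative cycle of $G-V(C_1)$ contained in $G-V(C_0)$. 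The subcase hypothesis forces $G-V(C_0)$ (and hence $G-V(C_1)$) to contain no even-length negative cycle, giving \eqref{g3} vacuously, while Lemma~\ref{atMost2negCyc} combined with the containment gives \eqref{g4}(b).

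The remaining task is \eqref{g2}. Suppose for contradiction that $G-V(C_1)$ contains an unbalanced theta $T = N \cup P$ where $N$ is a negative cycle and $P$ is a chord-path internally disjoint from $N$. By our control over negative cycles in each case, $N$ is also a negative cycle of $G-V(C_0)$. In the no-good-theta-pair cases, Lemma~\ref{fragileNegCyc} forces $|V_3(G) \cap V(N)| = 2$ and pins those two degree-three vertices to the endpoints of $P$; if $|V_2(G) \cap V(P)| \geq 2$ then $(N,P)$ is a good theta-pair, contradicting the no-good-theta-pair hypothesis, while if $|V_2(G) \cap V(P)| \leq 1$ a short subcubicity/$2$-connectivity argument places $T$ inside $G-V(C_0)$, contradicting fragility. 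The good-theta-pair case is handled analogously, with the final contradiction drawn from the maximality of $Q$ in the construction of Lemma~\ref{withGoodThetaPair}. The \emph{main obstacle} is this verification of \eqref{g2}: none of the helper lemmas establishes it directly, and the corner case $|V_2(G) \cap V(P)| \leq 1$ requires a delicate argument combining fragility, 2-connectivity, and well-behavedness.
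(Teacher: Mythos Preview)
Your case split and invocation of the helper lemmas are correct and essentially match the paper's proof. Requirements \ref{g1}, \ref{g3}, and \ref{g4} are handled properly.

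The gap is in your treatment of \eqref{g2}. You write the unbalanced theta as $T = N \cup P$ with $N$ a negative cycle and $P$ a chord-path, then try to force $P$ inside $G - V(C_0)$ by invoking Lemma~\ref{fragileNegCyc} and counting $|V_2(G)\cap V(P)|$. The case $|V_2(G)\cap V(P)|\le 1$ is left to ``a short subcubicity/2-connectivity argument'', but no such argument is given, and it is not clear one exists: vertices of $V(C_0)\setminus V(C_1)$ can have degree~3 in $G$, so $P$ could pass through them without contributing to $|V_2(G)\cap V(P)|$. In the good-theta-pair case you additionally reach inside the proof of Lemma~\ref{withGoodThetaPair} (``the maximality of $Q$''), which is not part of that lemma's statement.

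The fix is a one-line observation you never use: an unbalanced theta is the union of its \emph{two} negative cycles. In every branch of your argument you have established either that $G-V(C_0)$ and $G-V(C_1)$ have the same negative cycles, or that every negative cycle of $G-V(C_1)$ lies in $G-V(C_0)$. Hence if $T\subseteq G-V(C_1)$ is an unbalanced theta with negative cycles $N_1,N_2$, then $N_1,N_2\subseteq G-V(C_0)$, so $T=N_1\cup N_2\subseteq G-V(C_0)$, contradicting fragility. This is exactly how the paper disposes of \eqref{g2}; the degree-counting detour through Lemma~\ref{fragileNegCyc} is unnecessary.
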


\begin{proof}
We proceed in two cases. First, suppose $G$ has a good theta-pair. Then by Lemma \ref{withGoodThetaPair} $G$ has a negative cycle $C_1$ and a good cycle $C_0$ so that $|V_2(G) \cap V(C_1)| \ge 2$, and the graphs $G - V(C_0)$ and $G - V(C_1)$ have the same negative cycles. Clearly requirements \ref{g1}, \ref{g3}, and \ref{g4} are satisfied in this case. But an unbalanced theta is just a union of two negative cycles, and $G - V(C_0)$ and $G - V(C_1)$ have the same negative cycles. This means that because $G$ is fragile (and so $G - V(C_0)$ has no unbalanced theta) $G- V(C_1)$ also does not have an unbalanced theta, and requirement \ref{g2} is satisfied.

Now suppose $G$ has no good theta-pair. We proceed in two subcases.
In the first subcase, suppose there is a good cycle $C_0$ so that every negative cycle in $G - V(C_0)$ has odd-length. Then by Lemma \ref{cyclesContained} there is a negative cycle $C_1$ with $|V_2(G) \cap V(C_1)| \ge 1$ so that every negative cycle of $G - V(C_1)$ is contained in $G - V(C_0)$. Since by assumption of this subcase $G - V(C_0)$ has no even-length negative cycles, the same is true for $G - V(C_1)$, Hence $C_0$ and $C_1$ satisfy requirement \ref{g1}, \ref{g3}, and the first part of \ref{g4} (b) of the lemma. As before, because $G$ is fragile $G- V(C_0)$ has no unbalanced theta, which means the same is true for $G-V(C_1)$. Thus $C_0$ and $C_1$ also satisfy \ref{g2}. Finally, Lemma \ref{atMost2negCyc} means that $G - V(C_0)$ (and so also $G - V(C_1)$) has at most two negative cycles, meaning the final part of $\ref{g4}$ (b) is satisfied.

In the second subcase, by Lemma \ref{usable_cycle} let $C_0$ be a good cycle in $G$. Since we are not in the first subcase, let $N \subseteq G$ be an even-length negative cycle disjoint from $C_0$. If $G$ is isomorphic to fish, the lemma is proved. And so we may assume $G$ is not isomorphic to fish which means by Lemma \ref{disjNegcycfromGood} there exist a negative cycle $C_1$ and a good cycle $C_0$ in $G$ so that $|V_2(G) \cap V(C_1)| \ge 1$, and the graphs $G - V(C_0)$ and $G - V(C_1)$ have the same negative cycles. Recalling that $G$ is fragile, this means requirements \ref{g1}, \ref{g2}, \ref{g3}, and the first part of \ref{g4} (b) of the lemma are satisfied by $C_0$ and $C_1$. And, again by Lemma \ref{atMost2negCyc} the final part of $\ref{g4}$ (b) is satisfied, completing the proof.
\end{proof}

\section{Constructing a sequence of generalized cycles using the Cycle Selection Algorithm}\label{csa}

With the lemmas of the previous two sections in place, we  describe an algorithm to select our list of cycles. We will also require the following lemma from \cite{devos2023nowherezero}.

\begin{lemma}[\cite{devos2023nowherezero}]\label{lem:firstCycle}
Let $G$ be a 3-connected signed graph that has two vertex-disjoint negative cycles.  Then one of the following holds:
\begin{enumerate}
\item $G$ has vertex-disjoint negative cycles $C_1$, $C_2$ so that $V(C_1) \cup V(C_2) =V(G)$, or
\item $G$ has a negative cycle $C$ so that $G \setminus V(C)$ has an unbalanced theta.
\end{enumerate}
\end{lemma}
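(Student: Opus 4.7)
The plan is a direct application of the fan form of Menger's theorem followed by a short pigeonhole argument; no extremal choice or rerouting is needed once the Menger paths are in place. Let $D_1, D_2$ be the two given vertex-disjoint negative cycles and set $S = V(D_1) \cup V(D_2)$. If $S = V(G)$ then conclusion (1) is immediate with $C_1 = D_1$ and $C_2 = D_2$, so we may instead pick $v \in V(G) \setminus S$. The aim becomes to produce an unbalanced theta inside $G \setminus V(D_2)$, which will give conclusion (2) with $C = D_2$.

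Assume first the generic case $|S| \ge 3$. The fan form of Menger's theorem applied to the 3-connected graph $G$ yields three internally vertex-disjoint paths $Q_1, Q_2, Q_3$ from $v$ to three distinct vertices $w_1, w_2, w_3 \in S$, each $Q_i$ meeting $S$ only at its endpoint $w_i$. By the pigeonhole principle two of the endpoints land on the same cycle; without loss of generality $w_1, w_2 \in V(D_1)$. The path $Q_1 \cup Q_2$ from $w_1$ to $w_2$ through $v$ has interior contained in $V(G) \setminus S$, so together with the two arcs of $D_1$ between $w_1$ and $w_2$ it forms a theta subgraph $T$ of $G$ with branching vertices $w_1, w_2$. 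One of the three cycles of $T$ is precisely $D_1$, which is negative, so $T$ is unbalanced; and $V(T) = V(D_1) \cup \mathrm{int}(Q_1) \cup \mathrm{int}(Q_2) \cup \{v\}$ is disjoint from $V(D_2)$, so $T$ lies in $G \setminus V(D_2)$, as required.

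The one obstacle is the degenerate regime $|S| < 3$, which can occur only when at least one of $D_1, D_2$ is a single-vertex negative loop. If only one of them is a loop then $|S| \ge 3$ still holds and the argument above is unchanged. The remaining possibility is that both cycles are loops on two distinct vertices, in which case $|S| = 2$ and Menger delivers only two $v$-$S$ paths, one ending on each loop, so the pigeonhole step fails as stated. This small configuration is so constrained by 3-connectivity that the desired theta can be exhibited by inspection --- for instance, by temporarily subdividing one loop into a two-cycle (which inflates $S$ without altering 3-connectivity or sign structure), running the main argument, and then undoing the subdivision. Handling this technical nuisance is the only part of the proof that requires care; the substance of the argument is the Menger-plus-pigeonhole construction above.
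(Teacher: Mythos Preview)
This lemma is not proved in the present paper; it is imported from \cite{devos2023nowherezero} and merely quoted here, so there is no in-paper proof to compare against. Judged on its own merits, your Menger-plus-pigeonhole argument for the case $|S|\ge 3$ is clean and correct: the fan version of Menger gives three internally disjoint $v$--$S$ paths with distinct endpoints in $S$, two of which land on the same $D_i$, and together with $D_i$ they form an unbalanced theta disjoint from the other negative cycle, yielding conclusion~(2).

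The degenerate case, however, is a genuine gap rather than a technicality. Your proposed fix --- subdivide one of the loops to enlarge $S$ --- does not work: subdividing a loop at $u_1$ creates a new vertex whose \emph{only} neighbour is $u_1$, so the resulting graph is not even 2-connected and the Menger argument cannot be rerun. Worse, the lemma as literally stated appears to fail in this regime. Take $K_4$ with all six edges positive and attach a negative loop at two of the four vertices. This graph is 3-connected with two vertex-disjoint negative cycles (the loops), yet the only negative cycles are the two loops; option~(1) fails since the loops cover only two of the four vertices, and option~(2) fails since deleting either loop-vertex leaves a triangle plus a loop, which contains no theta subgraph at all. So either the statement carries an implicit loop-free hypothesis from its original source --- harmless here, since the Cycle Selection Algorithm applies it only to 3-edge-connected cubic graphs, where a loop would force a cut-edge --- or the loop case needs a separate treatment. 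Either way, you should not claim it can be dispatched by inspection.
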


\vspace{1cm}

\hrule

\smallskip

\hrule

\medskip

    \noindent{\textbf{Cycle Selection Algorithm}}
    
    \medskip
    
    \noindent{\textbf{Input:} A signed 3-edge-connected cubic graph $G$ with two disjoint negative cycles}
    
    \noindent{\textbf{Output:} A non-empty ordered list $\mathcal{C} = C_1, C_2, \dots , C_t$ of `cycles', where each cycle in $\mathcal{C}$ is classified one of four ways: positive, negative ordinary, negative special, or fish}

    \medskip

\noindent{\textbf{Pre-process step: }One of the two cases in Lemma \ref{lem:firstCycle} holds for $G$. If it is the first case, then let $C_1,C_2$ be the two cycles given in that case. Declare both of them to be \underline{negative special} and \textbf{return $\mathcal{C} = C_1,C_2$}.}

The second case of Lemma \ref{lem:firstCycle} must hold. Let $C_1$ be the negative cycle supplied in that case, and declare it to be \underline{negative special}. The remainder of the algorithm is recursive. Begin with $\mathcal{C} = C_1$.

\medskip

\noindent{\textbf{Recursive step: }Given $\mathcal{C} = C_1, \dots, C_{i - 1}$. Let $G' = G - \cup_{C \in \mathcal{C}}V(C)$. Note that our assumptions imply that $G'$ is well-behaved.}

        \begin{enumerate}
            \item If $G'$ is empty \textbf{return} $\mathcal{C}$.
            \item Let $H$ be a leaf-block in the block-cut tree of $G'$, choose $H$ so that $G' - V(H)$ has an unbalanced theta if possible.

            (From this point forward, $H$ has a usable cycle by Lemma \ref{usable_cycle} and because a vertex of degree 0 or 1 is usable.)
            
            \item \label{step3} If $G'$ does not have an unbalanced theta, let $C_i$ be a usable cycle in $H$. If $C_i$ is negative, then declare that $C_i$ is \underline{negative ordinary}. Otherwise declare that $C_i$ is \underline{positive}. Then, \textbf{goto} Recursive step with $\mathcal{C} \cup \{C_i\}$.
            
            (From this point forward, $G'$ has an unbalanced theta.)
            \item \label{step5} If $H$ is a negative cycle, then it is usable. Declare that $C_i= H$ is \underline{negative ordinary}, and \textbf{goto} Recursive step with $\mathcal{C} \cup \{C_i\}$.
            
            (From this point forward $H$ has a good cycle by Lemma \ref{usable_cycle}.)
            
            \item \label{step7}Let $C_i$ be a good cycle in $H$, choosing so that $G'- V(C_i)$ has an unbalanced theta if possible.
            
            \item\label{step6} If $G' - V(C_i)$ has an unbalanced theta, declare that $C_i$ is \underline{positive} and \textbf{goto} Recursive step with $\mathcal{C} \cup \{C_i\}$.

            (From this point forward, $G'$ is fragile by definition, and $G' = H$ is 2-connected.)

            \item Let $k$ be the number of even-length negative ordinary cycles in $\mathcal{C}$.
            
            \item \label{unbalThetaStep}
            \begin{enumerate}
                \item \label{fishStep}If $G'$ is switching equivalent to a fish graph proceed as follows: If $k$ is odd, declare that $C_i = G'$ is \underline{fish} and \textbf{goto} Recursive step with $\mathcal{C} \cup \{C_i\}$. If $k$ is even, let $C_i$ be the good cycle in $G'$, declare that $C_i$ is positive and \textbf{goto} Recursive step with $\mathcal{C} \cup \{C_i\}$.

                \item \label{endStep}$G'$ is not switching equivalent to a fish graph.
                \begin{itemize}
                    \item Let $C_0, C_1$ be the cycles supplied by Lemma \ref{theBeast}.
                    \item Let $j$ be the number of even-length negative cycles in $G' - V(C_0)$ (which is equal to the number of even-length negative cycles in $G' - V(C_1)$).
                \end{itemize}

                If $k + j$ is even, let $C_i$ be $C_1$ and declare $C_i$ to be \underline{negative special}. Otherwise let $C_i$ be $C_0$ and declare $C_i$ to be \underline{positive}. Then, \textbf{goto} Recursive step with $\mathcal{C} \cup \{C_i\}$.
            \end{enumerate}

        \end{enumerate} 

\hrule

\smallskip

\hrule

\vspace{1cm}

The following lemmas describe properties of the Cycle Selection Algorithm.

\begin{lemma}\label{lem:cycSelAlg}
    The Cycle Selection Algorithm terminates. The members of the returned list $\mathcal{C}$ are vertex-disjoint and $V(G) = \cup_{C \in \mathcal{C}}V(C)$.
\end{lemma}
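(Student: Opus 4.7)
The plan is to prove all three assertions simultaneously by verifying that each iteration of the recursive step either returns in step 1 or appends to $\mathcal{C}$ a non-empty generalized cycle $C_i$ which is a subgraph of the current $G' = G - \bigcup_{C \in \mathcal{C}} V(C)$. Granting this, vertex-disjointness is immediate by induction on the number of iterations, because $V(C_i) \subseteq V(G')$ is disjoint from the vertex sets of previously chosen cycles; termination follows because the non-negative integer $|V(G')|$ strictly decreases on each non-returning iteration and so equals zero after at most $|V(G)|$ iterations; and coverage $V(G) = \bigcup_{C \in \mathcal{C}} V(C)$ is exactly the condition $G' = \emptyset$ under which step 1 returns.

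The content of the proof is therefore the case check that a suitable $C_i$ exists in each branch of the recursive step. The leaf-block $H$ chosen in step 2 is an induced subgraph of the well-behaved graph $G'$, hence is itself well-behaved. Then:
\begin{itemize}
    \item In step \ref{step3}, if $H$ is 2-connected with $|V(H)| \ge 2$, then Lemma \ref{usable_cycle} yields a usable cycle; otherwise $H$ is an isolated-vertex or bridge block of $G'$ and therefore contains a vertex of degree $0$ or $1$ in $G'$, which is a good---hence usable---generalized cycle.
    \item In step \ref{step5}, $H$ itself is a negative cycle, which is usable.
    \item In step \ref{step7}, $H$ is 2-connected and not a negative cycle, so Lemma \ref{usable_cycle} produces a good cycle in $H$.
    \item In step \ref{fishStep}, when $G'$ is switching-equivalent to a fish, the $4$-cycle of $H$ composed of its two length-$2$ branches is positive (since the theta $H$ is balanced) and contains the two degree-$2$ midpoints of those branches; it is therefore good.
    \item In step \ref{endStep}, Lemma \ref{theBeast} supplies both $C_0$ and $C_1$, from which the parity rule selects $C_i$.
\end{itemize}
In every branch $V(C_i)$ is a non-empty subset of $V(G')$, as required.

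The main (minor) obstacle is bookkeeping: at each invocation of Lemma \ref{usable_cycle} or Lemma \ref{theBeast} one must verify that the necessary hypotheses hold. Well-behavedness of $G'$---and hence of any induced subgraph, in particular of each leaf-block $H$---is the explicit invariant asserted in the pseudocode; $2$-connectedness of $H$ whenever it is required is imposed by passing to a leaf-block together with the preceding items in the enumerate that exclude the single-vertex and single-edge cases; and the fragility hypothesis needed in step \ref{unbalThetaStep} is the parenthetical invariant recorded just before step 7, namely that once steps \ref{step3}--\ref{step6} have failed to apply, $G'$ has an unbalanced theta but no good cycle $C$ with $G' - V(C)$ containing an unbalanced theta, i.e., $G'$ is fragile by definition.
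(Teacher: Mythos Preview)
Your proof is correct and follows essentially the same approach as the paper's own proof, which is a two-sentence sketch: termination holds because each recursive step appends an element from a finite set, and disjointness and coverage follow from the definition of $G'$ together with the termination condition $G' = \emptyset$. You have simply filled in the case analysis that the paper leaves implicit, verifying in each branch that the chosen $C_i$ is a non-empty subgraph of $G'$; this is harmless extra detail rather than a different argument.
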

\begin{proof}
The first sentence is true because an element from a finite set is added to $\mathcal{C}$ at each recursive step. The second sentence follows from Lemma \ref{lem:firstCycle} for the pre-process step, and for the recursion it follows by definition of $G'$, and because $G'$ is empty when the algorithm terminates.    
\end{proof}

\begin{lemma}\label{lem:step8}
    The Cycle Selection Algorithm either exits in the pre-process step, or reaches Step \ref{unbalThetaStep} exactly once.
\end{lemma}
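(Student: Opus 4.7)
My plan is to prove the two directions of ``exactly once'' separately: Step~\ref{unbalThetaStep} is reached \emph{at most once}, and in the non-pre-process case it is reached \emph{at least once}.

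For the ``at most once'' direction, I would do a case analysis on the four branches of Step~\ref{unbalThetaStep} and verify that in each case the next recursive call begins with a $G'$ containing no unbalanced theta. In branch~\ref{fishStep} with $k$ odd, $C_i = G'$ so the next $G'$ is empty. In branch~\ref{fishStep} with $k$ even, $C_i$ is the unique good cycle of the fish (the length-$4$ cycle $u\text{-}a\text{-}v\text{-}b\text{-}u$ of the underlying theta); removing its four vertices leaves only the cycle $P \cup \{c_1 c_2\}$, in which every vertex has degree two, so no unbalanced theta remains. In branch~\ref{endStep} with $C_i = C_1$, property~(\ref{g2}) of Lemma~\ref{theBeast} gives no unbalanced theta in $G' - V(C_1)$. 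In branch~\ref{endStep} with $C_i = C_0$, the annotation preceding Step~\ref{unbalThetaStep} asserts that $G'$ is fragile, and since $C_0$ is good by property~(\ref{g1}) of Lemma~\ref{theBeast}, fragility gives no unbalanced theta in $G' - V(C_0)$. After any of these, every subsequent recursive call sees a subgraph that also has no unbalanced theta and therefore exits via Step~\ref{step3} rather than reaching Step~\ref{unbalThetaStep} again.

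For the ``at least once'' direction, in the non-pre-process case $C_1$ is chosen via the second alternative of Lemma~\ref{lem:firstCycle}, so the first recursive call begins with $G' = G - V(C_1)$ containing an unbalanced theta. I claim that whenever $G'$ contains an unbalanced theta, the recursive call either reaches Step~\ref{unbalThetaStep} or exits via Step~\ref{step5} or Step~\ref{step6} with the new $G'$ still containing one. Step~\ref{step6} preserves an unbalanced theta by its guard. For Step~\ref{step5}, the key sub-claim is that if $H$ is a negative-cycle leaf-block of $G'$, then $V(H)$ is disjoint from every other $2$-connected block $B$ of $G'$: any shared vertex $w$ would satisfy $\deg_H(w) = 2$ (as $H$ is a cycle) and $\deg_B(w) \ge 2$ (as $B$ is $2$-connected), forcing $\deg_{G'}(w) \ge 4$ and contradicting that $G'$ is subcubic. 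Since every unbalanced theta is $2$-connected and $H$ is a cycle, the theta lies in a $2$-connected block $B \ne H$ and is preserved intact in $G' - V(H)$.

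Combining these, so long as Step~\ref{unbalThetaStep} is not reached, every recursive call starting with an unbalanced theta produces a new $G'$ that is nonempty (in Step~\ref{step5}, $H \ne G'$ because $G'$ is not a cycle; in Step~\ref{step6}, $G' - V(C_i) \ne \emptyset$ by the step's guard) and still contains an unbalanced theta. Since Lemma~\ref{lem:cycSelAlg} guarantees that the algorithm terminates with $G' = \emptyset$, the transition from ``contains an unbalanced theta'' to ``does not'' must occur at Step~\ref{unbalThetaStep}. The main technical obstacle is the degree argument in the sub-claim, where the subcubic structure (inherited from $G$ being cubic) is essential; the remainder is careful case analysis guided by the algorithm's design.
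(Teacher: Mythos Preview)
Your proof is correct and follows essentially the same approach as the paper's: show that the unbalanced theta persists through the recursion until Step~\ref{unbalThetaStep} is reached (``at least once''), and that any branch of Step~\ref{unbalThetaStep} eliminates all unbalanced thetas so the step cannot recur (``at most once''). Your version is considerably more detailed than the paper's, which simply asserts that ``this unbalanced theta remains in the graph $G'$ until step~\ref{unbalThetaStep} is reached''; in particular, your degree argument showing that a negative-cycle leaf-block is vertex-disjoint from the $2$-connected block containing the theta (so Step~\ref{step5} preserves it) fills in a step the paper leaves implicit.
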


\begin{proof}
    If the algorithm doesn't exit in the pre-process step, then it begins the recursion with an unbalanced theta in the graph $G'$. This unbalanced theta remains in the graph $G'$ until step \ref{unbalThetaStep} is reached, and so step \ref{unbalThetaStep} is reached at least once. At step \ref{unbalThetaStep}, one of steps \ref{fishStep} or \ref{endStep} is executed and adds a cycle to $\mathcal{C}$ which removes all unbalanced thetas from the next graph to be processed. (This is straightforward to see in step \ref{fishStep}, and in step \ref{endStep} it follows from Lemma \ref{theBeast}). Since $G'$ must have an unbalanced theta to reach step \ref{unbalThetaStep}, it follows that step \ref{unbalThetaStep} can only be reached once.
\end{proof}
    
\begin{lemma}\label{lem:special}
    In the Cycle Selection Algorithm, the first cycle $C_1$ in the returned list $\mathcal{C}$ is negative special, and at most one other cycle in $\mathcal{C}$ is negative special. 
\end{lemma}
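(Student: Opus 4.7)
The plan is to verify this by direct inspection of where in the Cycle Selection Algorithm a cycle ever receives the label \emph{negative special}. The first assertion is immediate: in the pre-process step, $C_1$ is declared negative special regardless of which case of Lemma~\ref{lem:firstCycle} applies to $G$. So the real content is the bound on the number of other negative special cycles.

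I would split the argument into two cases according to the outcome of the pre-process step. If case 1 of Lemma~\ref{lem:firstCycle} holds, the algorithm terminates immediately and returns $\mathcal{C} = C_1, C_2$, with $C_2$ being the only cycle other than $C_1$; it is declared negative special, and the bound holds with equality. Otherwise case 2 of Lemma~\ref{lem:firstCycle} holds and we enter the recursion with $\mathcal{C} = C_1$.

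In the recursive phase I would simply scan each step that adds a cycle to $\mathcal{C}$ and read off the label assigned: Step~\ref{step3} assigns either \emph{negative ordinary} or \emph{positive}, Step~\ref{step5} assigns \emph{negative ordinary}, Step~\ref{step6} assigns \emph{positive}, and the fish-branch Step~\ref{fishStep} assigns either \emph{fish} or \emph{positive}. None of these produce a negative special cycle. The only recursive step that ever declares a cycle \emph{negative special} is Step~\ref{endStep}, and even there only on the sub-branch where $k+j$ is even.

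The key observation that closes the argument is Lemma~\ref{lem:step8}: Step~\ref{unbalThetaStep} is reached at most once during the algorithm, hence Step~\ref{endStep} is executed at most once, producing at most one additional negative special cycle. Combined with the pre-process analysis, this establishes that $\mathcal{C}$ contains $C_1$ as a negative special cycle together with at most one other such cycle. The argument is really just bookkeeping; the only thing one must be careful with is to confirm that every execution path ending in a \textbf{goto Recursive step} is covered, so I would write it as a single sweep through the labelled steps rather than a separate case analysis.
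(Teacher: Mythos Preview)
Your proposal is correct and follows essentially the same approach as the paper's proof: both arguments observe that $C_1$ is declared negative special in the pre-process step, that the only recursive step assigning this label is Step~\ref{endStep}, and that Lemma~\ref{lem:step8} guarantees this step is executed at most once. Your version is more explicit in enumerating the labels assigned at each step and in handling the pre-process case~1 separately, but the substance is identical.
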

\begin{proof}
    The first part is straightforward. By Lemma \ref{lem:step8}, step \ref{unbalThetaStep} is encountered at most once in the algorithm. In particular this means step \ref{endStep} is executed at most once. Since step \ref{endStep} is the only recursive step to add a negative special cycle to $\mathcal{C}$, the lemma follows.
\end{proof}

\begin{lemma}\label{lem:cycSelnegspec}
    If $\mathcal{C} = C_1, \dots, C_t$ is returned by the Cycle Selection Algorithm, and $C_j\subseteq \mathcal{C}$ is type negative special with $j \neq 1$, then either $e(\cup_{i = 1} ^{j-1} V(C_i), V(C_j)) \ge 2$, or $e(\cup_{i = 1} ^{j-1} V(C_i), V(C_j)) \ge 1$ and at most two negative cycles come after $C_j$ in the list $\mathcal{C}$.
\end{lemma}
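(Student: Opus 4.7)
The plan is to use Lemma \ref{lem:special} to pinpoint when $C_j$ is added to $\mathcal{C}$, and then read off both alternatives of the statement directly from Lemma \ref{theBeast}(4). First, since $C_j$ is negative special with $j \neq 1$, Lemma \ref{lem:special} forces $C_j$ to be added during step \ref{endStep} of the algorithm (this is the only recursive step that can produce a later negative special cycle). At the moment this happens, the graph $G^{\ast} := G - \bigcup_{i<j} V(C_i)$ on which the algorithm is operating is well-behaved, $2$-connected, fragile, and not switching-equivalent to a fish, so Lemma \ref{theBeast} applies; the declaration of $C_j$ as negative special (the ``$k+j$ even'' branch) identifies $C_j$ with the negative cycle $C_1$ produced by that lemma.

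Next, I would translate Lemma \ref{theBeast}(4) into the required edge-count dichotomy. Because $G$ is cubic (Lemma \ref{getcubic}) and $G^{\ast}$ is $2$-connected, every $v \in V(C_j)$ satisfies $\deg_{G^{\ast}}(v) \in \{2,3\}$, and $v \in V_2(G^{\ast})$ holds precisely when the unique edge of $G$ at $v$ not used by $C_j$ leads into $\bigcup_{i<j} V(C_i)$. This yields the identity
\[
  e\!\left(\bigcup_{i=1}^{j-1} V(C_i),\, V(C_j)\right) \;=\; |V_2(G^{\ast}) \cap V(C_j)|,
\]
so case (a) of Lemma \ref{theBeast}(4) gives the edge-count $\ge 2$ conclusion directly, while case (b) gives edge-count $\ge 1$ together with the additional fact that $G^{\ast} - V(C_j)$ has at most two negative cycles.

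The main obstacle is the second half of case (b): bounding by $2$ the number of negative cycles of $\mathcal{C}$ appearing after $C_j$. The argument I would use is this. By Lemma \ref{lem:step8} step \ref{unbalThetaStep} is visited only once, so after $C_j$ no fish cycle is ever added; and by Lemma \ref{lem:special} no further negative special cycle can be added either. Hence any negative cycle in $\mathcal{C}$ after $C_j$ is of type negative ordinary, selected in step \ref{step3} or step \ref{step5}, and at the moment of its selection it is a negative cycle-subgraph of the then-current graph, which is itself a subgraph of $G^{\ast} - V(C_j)$. Since successively selected cycles are pairwise vertex-disjoint, the post-$C_j$ negative ordinary cycles are pairwise distinct as subgraphs of $G^{\ast} - V(C_j)$, and their number is therefore bounded above by the total number of negative cycles in $G^{\ast} - V(C_j)$, which is at most two.
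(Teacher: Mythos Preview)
Your approach is the same as the paper's: locate $C_j$ as the cycle supplied by Lemma~\ref{theBeast} in step~\ref{endStep}, then read off both alternatives from Part~\ref{g4} using that the input graph is cubic. Your write-up is in fact considerably more detailed than the paper's, which simply asserts that ``this lemma follows from Part~\ref{g4} of Lemma~\ref{theBeast} because the input graph $G$ is cubic''; your explicit translation $e(\cup_{i<j}V(C_i),V(C_j))=|V_2(G^\ast)\cap V(C_j)|$ and your argument that the post-$C_j$ negative cycles are pairwise distinct negative cycles of $G^\ast-V(C_j)$ are exactly the verifications the paper leaves implicit.

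There is, however, one genuine omission. You assert that Lemma~\ref{lem:special} forces $C_j$ to be added during step~\ref{endStep}, on the grounds that this is the only \emph{recursive} step producing a negative special cycle. But $C_j$ need not come from a recursive step at all: if the algorithm exits in the pre-process step (first case of Lemma~\ref{lem:firstCycle}), then $C_2$ is negative special with $j=2\neq 1$, yet it was added in the pre-process, not in step~\ref{endStep}. The paper handles this case separately with a one-line observation (``If the algorithm exits in the pre-process step, then $t=2$ and the lemma holds''), and you should too. The fix is immediate --- since $V(C_1)\cup V(C_2)=V(G)$ and $G$ is $3$-edge-connected, $e(V(C_1),V(C_2))\ge 3$, so the first alternative holds (or, even more simply, no cycles come after $C_2$, so the second alternative holds trivially once you note at least one edge must cross). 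Minor aside: the input to the algorithm is already assumed cubic, so citing Lemma~\ref{getcubic} is unnecessary.
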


\begin{proof}
    If the algorithm exits in the pre-process step, then $t=2$ and the lemma holds. And so we may assume $C_j$ is added to $\mathcal{C}$ in step \ref{endStep} because that is the only recursive step to add a negative special cycle. But in step \ref{endStep}, the negative special cycle is supplied by Lemma \ref{theBeast}. This lemma follows from Part \ref{g4} of Lemma \ref{theBeast} because the input graph $G$ is cubic.
\end{proof}

\begin{lemma}\label{lem:cycSelFish}
    If $\mathcal{C} = C_1, \dots, C_t$ is returned by the Cycle Selection Algorithm, and $C_j\subseteq \mathcal{C}$ is type fish, then $j = t$.
\end{lemma}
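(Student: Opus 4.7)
The plan is to trace exactly when the algorithm assigns the label \emph{fish}. Inspection of the Cycle Selection Algorithm shows that the label \emph{fish} is created in only one place, namely the first branch of Step \ref{fishStep} (inside Step \ref{unbalThetaStep}). In that branch the assignment is $C_i = G'$, that is, the entire current residual graph is declared to be the fish cycle.

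Next, I would invoke the definition of $G'$ at the start of the Recursive step: $G' = G - \bigcup_{C \in \mathcal{C}} V(C)$. So once we append $C_i = G'$ to $\mathcal{C}$ and re-enter the Recursive step, the updated residual graph is $G - \bigl(\bigcup_{C \in \mathcal{C}} V(C) \cup V(G')\bigr) = \emptyset$. Therefore Step 1 of the Recursive step triggers immediately and the algorithm returns $\mathcal{C}$ with no further additions.

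Combining the two observations: if $C_j$ is of type fish, then $C_j$ was added in Step \ref{fishStep} with $C_j = G'$ as it was at that iteration, and the very next call to the Recursive step terminates the algorithm. Hence no $C_{j+1}$ is ever added, so $j = t$. The argument is essentially a single-paragraph bookkeeping check; there is no real obstacle, since the only subtle point is recognizing that ``$C_i = G'$'' really does mean that all remaining vertices are consumed, which is justified directly by the text of the algorithm and by the use of Lemma \ref{lem:cycSelAlg} to confirm that the returned list exhausts $V(G)$.
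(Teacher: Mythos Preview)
Your argument is correct and follows essentially the same reasoning as the paper: the label \emph{fish} is only ever assigned in Step~\ref{fishStep}, and there $C_i = G'$ consumes the entire residual graph, so the next recursive call terminates immediately. The paper's proof is just a more compressed version of what you wrote.
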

\begin{proof}
    Step \ref{fishStep} of the algorithm is the only step to add a fish-type cycle to $\mathcal{C}$. At that point, the graph $G'$ being processed is a fish graph. And so $C_j$ is the last cycle to be added to~$\mathcal{C}$.
\end{proof}

\begin{lemma}\label{lem:ordinary}
    In the Cycle Selection Algorithm, every negative ordinary cycle $C_j$ in the returned list $\mathcal{C} = C_1, C_2, \dots, C_t$ is induced and satisfies $e(V(C_j), \cup_{i = j+1}^{t} V(C_i) ) \le 1$. (In other words, all but at most one of the edges leaving a negative ordinary cycle go to the cycles earlier in the list).
\end{lemma}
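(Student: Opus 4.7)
The plan is to proceed in three stages. First, I will pin down the structural form of any $C_j$ declared negative ordinary by the algorithm: such a cycle is added only in step~\ref{step3} or step~\ref{step5}, and in both cases I will argue $C_j = H$ as subgraphs, where $H$ is the leaf-block of $G'_{j-1} := G - \bigcup_{i<j} V(C_i)$ chosen in that recursive step. Step~\ref{step5} gives this by definition. For step~\ref{step3}, since $C_j$ is negative and usable in $H$, it has at most one vertex not of degree $2$ in $H$; if $V(C_j) \subsetneq V(H)$ then the $2$-connectivity of $H$ yields two internally disjoint paths from $V(H) \setminus V(C_j)$ to $V(C_j)$, giving two vertices of $C_j$ of degree $\ge 3$ in $H$---a contradiction. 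And if $V(C_j) = V(H)$ but $E(C_j) \subsetneq E(H)$, any chord of $C_j$ in $H$ would have two endpoints of degree $\ge 3$ in $H$, again a contradiction. Hence $C_j = H$.

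Second, I will verify that $C_j$ is induced in $G$. Since $H$ is a block of $G'_{j-1}$, any edge of $G'_{j-1}$ with both ends in $V(H)$ lies on a cycle together with edges of $H$ and hence in the block $H$ itself; so $H$ is vertex-induced in $G'_{j-1}$. As $H = C_j$ is a cycle with no additional edges among its vertices, $C_j$ is induced in $G'_{j-1}$. Since $G'_{j-1}$ is itself the subgraph of $G$ induced on $V(G'_{j-1})$, the cycle $C_j$ is also induced in $G$.

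Third, I will bound the edge count $e(V(C_j), \bigcup_{i>j} V(C_i))$. By Lemma~\ref{lem:cycSelAlg}, $\bigcup_{i>j} V(C_i) = V(G'_j) = V(G'_{j-1}) \setminus V(H)$, and every edge of $G$ with both ends in $V(G'_{j-1})$ lies in $G'_{j-1}$. Because $H$ is a leaf-block of $G'_{j-1}$, at most one vertex $c \in V(H)$ is a cut-vertex of $G'_{j-1}$; every $v \in V(H) \setminus \{c\}$ has all its $G'_{j-1}$-edges inside $H$, hence no edges to $V(G'_j)$. At $c$, we have $\deg_H(c) = 2$ and $\deg_{G'_{j-1}}(c) \le \deg_G(c) = 3$ since $G$ is cubic, so at most $3 - 2 = 1$ edge at $c$ leaves $V(H)$. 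Summing yields $e(V(C_j), V(G'_j)) \le 1$.

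The main obstacle is the first stage---reconciling the definitional ambiguity of a ``usable negative cycle in $H$'' (chosen by step~\ref{step3}) with the conclusion that this cycle is forced to equal the leaf-block $H$. Once this structural identification is in place, the induced property follows from standard block-tree theory and the edge count is an immediate consequence of the leaf-block property together with the cubicity of $G$.
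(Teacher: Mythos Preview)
Your proof is correct and ultimately rests on the same observation as the paper's: a negative usable cycle has at most one vertex not of degree~$2$. The paper applies this directly with degrees measured in $G'$, which in one stroke gives both that $C_j$ is induced (a chord would force two degree-$3$ endpoints in $G'$) and that at most one edge leaves $C_j$ toward $\bigcup_{i>j} V(C_i)$ (the lone degree-$3$ vertex has only one spare edge, $G$ being cubic). You instead read the degree-$2$ condition inside the leaf-block $H$, which obliges you to first prove the structural fact $C_j = H$ via $2$-connectivity of $H$, and then invoke leaf-block properties for the induced claim and the edge bound. This is a sound detour and even yields a little extra---namely that every negative ordinary cycle is an entire leaf-block of the current $G'$---but the paper's direct reading in $G'$ bypasses it.
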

\begin{proof}
    The algorithm adds a negative ordinary cycle $C_j$ to $\mathcal{C}$ only in steps \ref{step3} and \ref{step5}. In both cases, $C_j$ is a negative usable cycle and so has at most one vertex not in $V_2(G')$ by definition. Because the input graph is cubic the lemma follows.
\end{proof}

\begin{lemma}\label{lem:positive}
    In the Cycle Selection Algorithm, every positive cycle $C_j$ in the returned list $\mathcal{C} = C_1, C_2, \dots, C_t$ has two distinct vertices $x,y \in V(C_j)$ so that $x$ has a neighbour in $\cup_{i = 1}^{j-1} V(C_i)$ and one of the following is true.
    \begin{enumerate}
        \item $y$ also has a neighbour in $\cup_{i = 1}^{j-1} V(C_i)$.
        \item $y$ is incident with a cut edge in $\cup_{i = j}^{t} V(C_i)$. 
    \end{enumerate}
\end{lemma}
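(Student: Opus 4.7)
The plan is to split into cases according to which step of the algorithm declares $C_j$ to be positive. Inspection of the algorithm shows that this happens only in step \ref{step3}, step \ref{step6}, step \ref{fishStep}, or step \ref{endStep}, and in each case $C_j$ is a good cycle by construction; when $C_j$ is an actual cycle it therefore contains at least two vertices of degree $2$ in an appropriate reference graph---a leaf-block $H$ of $G'$ in steps \ref{step3} and \ref{step6}, and $G'$ itself in steps \ref{fishStep} and \ref{endStep}. The key driver is cubicity of $G$: any vertex $v\in V(G')$ with $\deg_{G'}(v)=2$ has exactly one neighbour in $V(G)\setminus V(G')=\cup_{i=1}^{j-1}V(C_i)$, so such a $v$ immediately witnesses the conclusion of the lemma.

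First I would dispose of the easy steps \ref{fishStep} and \ref{endStep}. The commentary preceding step \ref{unbalThetaStep} records that $G'=H$ is $2$-connected when these steps execute, so no cut-vertex issues arise. In step \ref{endStep} the cycle $C_0$ supplied by Lemma \ref{theBeast} is good and hence has two degree-$2$ vertices in $G'$, both of which then have a neighbour in $\cup_{i=1}^{j-1}V(C_i)$, giving case~1. In step \ref{fishStep} a short direct check shows that the unique good cycle of a fish graph is the length-$4$ cycle built from the two length-two branches of its theta, whose two middle vertices have $G'$-degree $2$; again case~1 holds.

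The real work is in steps \ref{step3} and \ref{step6}. Here $C_j$ is a good cycle inside a leaf-block $H$ of $G'$, so $C_j$ contains at least two vertices of degree $2$ in $H$. The subtlety is that $H$ may possess a cut-vertex $w$ of $G'$ whose degree in $G'$ strictly exceeds $\deg_H(w)$; if one of the guaranteed degree-$2$-in-$H$ vertices of $C_j$ coincides with $w$, it is not automatically of degree $2$ in $G'$. When $w$ is not among them, both vertices have $\deg_{G'}=2$ and case~1 is immediate. Otherwise exactly one of them, call it $y$, equals $w$, while the other, $x$, still has $\deg_{G'}(x)=2$ and supplies the required neighbour in $\cup_{i=1}^{j-1}V(C_i)$. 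To obtain case~2 I would argue: since $\deg_H(w)=2$ and $w$ is a cut-vertex of $G'$, there is at least one edge of $G'$ from $w$ to $V(G')\setminus V(H)$; cubicity of $G$ forces $\deg_{G'}(w)=3$, so this outside edge is unique. Being $w$'s only attachment to the rest of $G'$, it must be a bridge of $G'$, and since $G'=G[\cup_{i=j}^t V(C_i)]$ this bridge is precisely a cut edge in the subgraph induced by $\cup_{i=j}^t V(C_i)$, completing case~2 with $y=w$.

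The principal obstacle I anticipate is the careful bookkeeping between ``degree in $H$'' and ``degree in $G'$'' in the leaf-block case, and specifically verifying the bridge/cut-edge identification for case~2. The fish-graph case also requires a brief direct structural check that its only good cycle is indeed the length-$4$ cycle, but that follows quickly from the signature constraints in Definition \ref{fish}. Everything else then reduces to cubicity together with the definition of a good cycle.
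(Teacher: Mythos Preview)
Your argument is correct and follows the same core idea as the paper's proof: positive cycles are always good, and goodness plus cubicity of $G$ yields the required vertices. The paper's proof is a two-sentence sketch that simply records this observation, whereas you have unpacked the one genuine subtlety---that in steps \ref{step3} and \ref{step6} the cycle is good relative to the leaf-block $H$, so one of its degree-$2$-in-$H$ vertices may be the cut-vertex of $G'$, which then has degree~$3$ in $G'$ and witnesses alternative~2 via the bridge leaving $H$---and you also correctly include step~\ref{fishStep}, which the paper's enumeration omits.
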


\begin{proof}
    The algorithm adds a positive cycle $C_j$ to $\mathcal{C}$ only in steps \ref{step3}, \ref{step6} and \ref{endStep}. In all cases the cycle added is good. But a good cycle is either a single vertex of degree at most one, or has at least two vertices of degree two, and so the lemma follows because the input graph is cubic.
\end{proof}

    

\begin{lemma}\label{CYcSelLast}
    If $\mathcal{C}$ is the list returned by the Cycle Selection Algorithm, the number of even-length negative ordinary cycles plus the number of special cycles in $\mathcal{C}$ is even.
\end{lemma}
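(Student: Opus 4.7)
My plan is to do case analysis on the branches of the Cycle Selection Algorithm. Write $E$ for the number of even-length negative ordinary cycles and $S$ for the number of negative special cycles in $\mathcal{C}$; the goal is that $E + S$ is even. In the pre-process exit case, $\mathcal{C}$ consists of $2$ negative special cycles and no negative ordinary cycles, so $E+S = 2$. Otherwise, $C_1$ is negative special by Lemma~\ref{lem:special}, Step~\ref{unbalThetaStep} is reached exactly once by Lemma~\ref{lem:step8}, and I let $k$ denote the number of even-length negative ordinary cycles in $\mathcal{C}$ just before Step~\ref{unbalThetaStep}.

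I would split this ``Case B'' into four sub-cases matching the branches of Step~\ref{unbalThetaStep}. In B1 (Step~\ref{fishStep}, $k$ odd), the fish cycle terminates the algorithm by Lemma~\ref{lem:cycSelFish}, so $E + S = k + 1$ is even. In B2 (Step~\ref{fishStep}, $k$ even), inspection of the fish shows its unique good cycle $C_{ab}$ has complement $P \cup \{cd\}$, a single cycle of length $|P|+1$, which is even since $|P|$ is odd; the next recursive call adds it as a negative ordinary cycle, giving $E + S = k + 2$. In B3 (Step~\ref{endStep}, $k+j$ even, adding $C_1$ as negative special) and B4 (Step~\ref{endStep}, $k+j$ odd, adding $C_0$ as positive), the residual $G''$ has no unbalanced theta (in B3 by Lemma~\ref{theBeast}(2), in B4 by the fragility of $G'$) and has exactly $j$ even-length negative cycles by Lemma~\ref{theBeast}(3).

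For B3 and B4 it will suffice to show that if $n_2$ is the number of negative ordinary cycles added during the subsequent processing of $G''$ (which runs entirely through Step~\ref{step3}, since $G''$ has no unbalanced theta), then $n_2 \equiv j \pmod 2$; the totals would then be $k + n_2 + 2 \equiv 0$ in B3 and $k + n_2 + 1 \equiv 0$ in B4. I would prove this by induction on $|V(G'')|$, reducing everything to the local claim that when a usable cycle $C$ is picked in a leaf-block $H$ of $G''$, the number of even-length negative cycles of $G''$ through $V(C)$ is congruent mod $2$ to $[C \text{ is even-length negative ordinary}]$. The single-vertex case is trivial; the negative-usable case is direct, since $C$ itself is then the only negative cycle of $G''$ through $V(C)$, using that if the cut vertex $u$ of $H$ lies in $V(C)$ then $u$ has at most one edge outside $H$ (because $C$ uses two of its three edges and $G''$ is subcubic), so no cycle in another block uses $u$.

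The main obstacle is the good positive case, which I would handle via the key structural fact that \emph{every $2$-connected subcubic signed graph with no unbalanced theta has at most one negative cycle}. I would prove this by contradiction: given two distinct negative cycles $N_1, N_2$, either they are vertex-disjoint, in which case Menger's theorem in a $2$-connected graph yields two internally-disjoint paths between $V(N_1)$ and $V(N_2)$, and combining these with arcs of $N_1$ produces three internally-disjoint paths forming a theta containing $N_1$, forcing an unbalanced theta via the identity $\prod_i \mathrm{sign}(C_i) = +1$ on theta cycles; or they share edges, in which case they must share at least two disjoint paths (else they form an unbalanced theta directly), but then a component of $N_1 \triangle N_2$ is a cycle sharing a single path with $N_1$, again forcing an unbalanced theta. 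Applied to the leaf-block $H$: if $H$ has a (necessarily unique) negative cycle $N$, then $V(N) \cap V(C) = \emptyset$, since otherwise $N \cap C$ is either a single path (unbalanced theta $N \cup C$) or multiple disjoint paths (a component of $N \triangle C$ is a second negative cycle in $H$, contradicting uniqueness). Cycles in other blocks contribute $0$ by the same cut-vertex argument, so the count is $0 = [C \text{ is even-length negative ordinary}]$.
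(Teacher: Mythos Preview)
Your overall architecture matches the paper's: reduce to the behaviour of the algorithm after Step~\ref{unbalThetaStep}, and argue that the even-length negative ordinary cycles subsequently produced are controlled by~$j$. Your treatment of the pre-process exit and of both fish sub-cases is correct and more explicit than the paper's.

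There is, however, a real gap in your proof of the key structural fact (``a $2$-connected subcubic signed graph with no unbalanced theta has at most one negative cycle''). In the sub-case where $N_1\cap N_2$ consists of at least two paths, your claim that \emph{some component of $N_1\triangle N_2$ shares a single path with $N_1$} is false in general. For a concrete failure, take a subdivision of $K_4$ in which two vertex-disjoint edges play the role of $P_1,P_2$ and the other four edges are the arcs $A_1,A_2$ (of $N_1$) and $B_1,B_2$ (of $N_2$); when the arc-pairing is ``crossed'', $N_1\triangle N_2$ is a single cycle meeting $N_1$ in the two arcs $A_1,A_2$. So no component meets $N_1$ in a single path, and your theta construction does not go through. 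The fix is short and is essentially what the paper uses: if $N_1\neq N_2$ share an edge, take a maximal subpath $P$ of $N_2$ using only edges outside $E(N_1)$; subcubicity forces $P$ to be internally vertex-disjoint from $N_1$, so $N_1\cup P$ is a theta containing the negative cycle $N_1$, hence unbalanced.

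In fact the paper proves (and uses) the stronger statement: in a graph with no unbalanced theta, \emph{every negative cycle is its own block}. The same ear argument gives this directly (if the block $B$ properly contains $N$, an ear of $B$ from $N$ yields an unbalanced theta). With this in hand the endgame is immediate: once $G''$ has no unbalanced theta, each leaf-block is either a negative cycle (added wholesale) or has no negative cycle at all, so removing a good cycle never touches any negative cycle; hence \emph{every} negative cycle of $G''$ is eventually added, giving $n_2=j$ exactly rather than just mod~$2$. Your inductive bookkeeping and the extra analysis of $V(N)\cap V(C)$ in the good positive case then become unnecessary.
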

\begin{proof}
    Let $\mathcal{C} = C_1, C_2, \dots, C_t$ be the list returned by the algorithm on input graph $G$.
    If the algorithm finishes in the pre-process step, then the lemma clearly holds. And so we may assume that the recursive step is reached. By Lemma \ref{lem:step8}, step \ref{unbalThetaStep} is reached exactly once, and therefore executes exactly one of \ref{fishStep} or \ref{endStep}. If \ref{fishStep} is executed, then either a fish is added or both a positive cycle and an even-length negative ordinary cycle is added to $\mathcal{C}$ (the negative cycle is added on the next step in the recursion). Either way this fixes the number of even-length negative ordinary cycles in $\mathcal{C}$ to be odd. Since there is exactly one negative special cycle in $\mathcal{C}$ the lemma follows.
    
    If \ref{endStep} is executed, note that in the following recursive call $G'$ is a graph with no unbalanced theta, and so every negative cycle is its own block. It follows that every one of those negative cycles will be added to $\mathcal{C}$. The choice of adding either a negative special cycle or a positive cycle in that step fixes the parity required for this lemma.
\end{proof}

The following lemma will be used to establish sufficient connectivity to find a perfect matching in a certain auxiliary graph later. Let $G = (V,E)$ be a connected graph and let $X \subseteq V$. Denote by $\delta(X)$ the set of edges with one end in $X$ and the other in $V\setminus X$. As in \cite{devos2023nowherezero}, we say that a cycle $C$ \emph{straddles} the edge-cut $\delta(X)$ if $C$ contains an edge with both ends in $X$ and an edge with both ends in $V \setminus X$.

\begin{lemma}\label{lem:negCycStraddle}
    Let $G$ be a signed 3-edge-connected cubic graph with two disjoint negative cycles, and let $\mathcal{C} = C_1, \dots, C_t$ be the list of cycles returned when $G$ is input to the Cycle Selection Algorithm. Let $\mathcal{N}$ be the set of all but the last two even-length negative ordinary cycles in $\mathcal{C}$. Then for every $C_i,C_j \in \mathcal{N}$ and every edge cut $S \subseteq E(G)$
\begin{enumerate}
    \item if $C_i$ straddles $S$, then $|S| \neq 3$, and
    \item if $C_i$ and $C_j$ straddle $S$, and $i \neq j$, then $|S| \neq 4$.
\end{enumerate}
\end{lemma}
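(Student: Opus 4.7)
The plan is to prove both parts by contradiction, using the following key ingredients: (i) every cycle meets every edge cut in an even number of edges; (ii) Lemma \ref{lem:ordinary}, which gives that every negative ordinary $C_k$ is induced, has exactly $|V(C_k)|$ non-cycle edges (one per vertex), and at most one of them goes to $V_{>k}$, so at least $|V(C_k)| - 1$ of them go to $V_{<k} := \bigcup_{\ell < k} V(C_\ell)$; (iii) well-behavedness of $G_k' := G - V_{<k}$ at each stage of the algorithm (an invariant inherited by induction from Observation \ref{obs:well-behaved}, since vertex-deletion preserves well-behavedness); (iv) 3-edge-connectivity of $G$; and (v) an easy induction showing that $G[V_{<k}]$ is connected for every $k \ge 2$, using that in every recursive step the added cycle $C_k$ has at least one edge to $V_{<k}$ (verifiable in all four types: negative ordinary, negative special, positive, fish).

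For Part 1, suppose $C_i \in \mathcal{N}$ straddles $S = \delta(X)$ with $|S| = 3$. By (i), $C_i$ contains exactly $2$ edges of $S$; since straddling requires edges (not merely vertices) on both sides, both $V(C_i) \cap X$ and $V(C_i) \cap Y$ contain at least $2$ vertices, so $|V(C_i)| \ge 4$, and the third edge of $S$ is either a non-cycle edge of $V(C_i)$ or an edge disjoint from $V(C_i)$. First I would rule out $V_{<i} \subseteq X$ (symmetrically $V_{<i} \subseteq Y$): otherwise, by (ii), all but at most one non-cycle edge from $V(C_i) \cap Y$ would have to land in $V_{<i} \cap Y = \emptyset$, and a short counting argument around $V(C_i) \cap Y$ combined with 3-edge-connectivity of $G$ produces a cut of size at most $2$, a contradiction. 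Hence $V_{<i}$ meets both sides of the partition; by (v), at least one edge of $G[V_{<i}]$ crosses $S$, and since the two cycle edges of $C_i$ account for the other two edges of $S$, this crossing edge is the unique third edge of $S$. Analysing $V_{>i}$: its edges to $V_{<i} \cup V(C_i)$ must avoid $S$ beyond what is already accounted for, which, together with 3-edge-connectivity applied to $V_{>i} \cap X$ (or $\cap Y$), forces $V_{>i}$ to lie entirely on one side of the cut. Finally, because $C_i \in \mathcal{N}$, there are at least two further even-length negative ordinary cycles lying in $V_{>i}$; applying well-behavedness of $G_{i+1}'$ to their configuration together with the size-$3$ restriction of $S$ to $V_{>i}$ yields the desired contradiction.

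For Part 2, suppose $C_i, C_j \in \mathcal{N}$ with $i < j$ both straddle $S$ with $|S| = 4$. By (i) each cycle contributes at least $2$ edges to $S$, so exactly $2$ each, and all four edges of $S$ are cycle edges of $C_i \cup C_j$. In particular no non-cycle edge of $V(C_i) \cup V(C_j)$ lies in $S$, so such non-cycle edges stay on the side of their $V(C_i) \cup V(C_j)$-endpoint. Since $V(C_i) \subseteq V_{<j}$ and $C_i$ straddles, the set $V_{<j}$ straddles $(X,Y)$. In $G_j'$, the cut $S$ restricts to exactly the two cycle edges of $C_j$ in $S$, giving a $2$-edge-cut of $G_j'$ that separates $V(C_j) \cap X$ from $V(C_j) \cap Y$. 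Applying well-behavedness of $G_j'$ to both sides of this $2$-cut and using Lemma \ref{lem:ordinary} to route the $\ge |V(C_j)| - 1$ non-cycle edges from $V(C_j)$ into $V_{<j}$ (on their respective sides), I show that the leaf-block of $G_j'$ containing $C_j$ has a very constrained shape; this combined with the inherited straddling of $V_{<j}$ and 3-edge-connectivity of $G$ produces a cut of size strictly less than $3$ in $G$, a contradiction.

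The main obstacle is the intricate case analysis, especially in Part~1, where the identity of the ``extra'' edge of the $3$-cut and the placement of the at-most-one exceptional non-cycle edge from $V(C_i)$ into $V_{>i}$ must be tracked carefully. The role of excluding the last two even-length negative ordinary cycles from $\mathcal{N}$ is precisely to guarantee that $V_{>i}$ (and in Part~2, $V_{>j}$) contains enough further cycles of the same type to invoke the algorithm's invariants in the final contradiction step.
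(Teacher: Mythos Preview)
Your ingredients (i)--(v) are correct and your opening moves are reasonable, but both parts stall at the final contradiction. In Part~1, once you have shown that the third edge of $S$ lies in $G[V_{<i}]$, no edge of $S$ meets $V_{>i}$ at all; so your phrase ``the size-$3$ restriction of $S$ to $V_{>i}$'' refers to the empty set, and it is unclear what well-behavedness of $G_{i+1}'$ is meant to contradict. The intermediate claim that $V_{>i}$ must lie on one side is asserted but not proved, and in fact nothing you have said rules out $V_{>i}$ having components on both sides, each attached to $V_{<i}$ on its own side by several edges. In Part~2 the gap is similar: the assertion that the leaf-block of $G_j'$ containing $C_j$ has a shape forcing a small cut in $G$ is never substantiated.

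The paper sidesteps all of this by exploiting a fact you do not use: $C_1$ is negative special (Lemma~\ref{lem:special}), so $E(C_1)\cap S=\emptyset$ and one may take $V(C_1)\subseteq A$. Letting $k$ be the \emph{least} index with $V(C_k)\cap B\neq\emptyset$, at the moment $C_k$ is selected every vertex of $B$ still has degree~$3$ in the current graph (except, in Part~1, possibly the single $B$-end of the third cut edge). Since every selected cycle has a degree-$2$ vertex, this immediately forces $k=\min\{i,j\}$ in Part~2, whereupon Lemma~\ref{lem:ordinary} gives two degree-$3$ vertices on a negative ordinary cycle. In Part~1 the same reasoning rules out $k=i$, and for $k<i$ one gets $V(C_k)\subseteq B$ with at most one degree-$2$ vertex; so $C_k$ must be positive or negative special. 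Lemma~\ref{lem:positive} then produces a cut-edge inside $G[B]$ (impossible by $3$-edge-connectivity), while Lemma~\ref{lem:cycSelnegspec} bounds the number of later negative cycles by two --- and \emph{this} is precisely where $C_i\in\mathcal N$ enters, since $C_i$ together with the two later even-length negative ordinary cycles excluded from $\mathcal N$ already give three.
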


\begin{proof}
  Suppose for contradiction part two is not true.  Then there exists $S = E(A,B)$ with $|S| = 4$ so that both $C_i$ and $C_j$ straddle $S$.  By Lemma \ref{lem:special} $C_1 \neq C_i,C_j$ because $C_1$ is negative special. This means $E(C_1) \cap S = \emptyset$ and so without loss let $V(C_1) \subseteq A$. Let $k$ be the minimum index for which $V(C_k) \cap B \neq \emptyset$ and note that $k \le \min\{i,j\}$.  At the point when $C_k$ is selected, we are operating in the graph $G - \cup_{h=1}^{k-1} V(C_h)$ and in this graph all vertices in $B$ have degree 3.  Every cycle selected by the algorithm has at least one degree 2 vertex (by Lemma \ref{lem:positive} a positive cycle has at least one degree 2 vertex, by Lemma \ref{lem:ordinary} a negative ordinary cycle has at least two degree 2 vertices, and by Lemma \ref{lem:cycSelnegspec} a negative special cycle has at least one degree 2 vertex) so it must be that $k = \min\{i,j\}$.  But then $C_k$ would be an ordinary negative cycle containing at least two degree 3 vertices, which is a contradiction to Lemma \ref{lem:ordinary}. This completes the proof of part two.
  
Next suppose (for a contradiction) that part one is false and let $S = E(A,B)$ be a 3-edge-cut straddled by the negative ordinary cycle $C_i$.  Define $S = \{e,f,f'\}$ where $S \cap E(C_i) = \{f,f'\}$ and note that because $G$ is 3-edge-connected and cubic, the edges $e,f,f'$ are pairwise non-adjacent.  
As before, we may assume $V(C_1) \subseteq A$ and we let $k$ be the minimum index for which $V(C_k) \cap B \neq \emptyset$ (again note that $k \le i$).  Let $v$ be the end of $e$ in $B$.  Note that at the point when $C_k$ is selected, we are operating in the graph $G - \cup_{h=1}^{k-1} V(C_h)$ and in this graph all vertices in $B$ have degree 3 except possibly $v$.  

First consider the case that $k = i$ and let $u,u'$ be the ends of the edges $f,f'$ in $B$.  When $C_k$ is selected both $u$ and $u'$ are degree 3, but since $C_k$ is negative ordinary this is a contradiction to Lemma \ref{lem:ordinary}. So we must have $k < i$ and hence $V(C_k) \subseteq B$. This means that $C_k$ has at most one vertex of degree 2 when it is selected.

Observe that there are only two types of cycles that can be selected by our process with at most one vertex of degree two: negative special and positive. If $C_k$ were negative special, it would have at most two negative cycles that follow it by Lemma \ref{lem:cycSelnegspec}. But this contradicts that $k \le i$ and $C_i \in \mathcal{N}$ by definition of $\mathcal{N}$. And so it must be that $C_k$ is positive. Because at most one vertex in $V(C_k)$ has a neighbour in $A$, by Lemma \ref{lem:positive} it must be that there is a vertex $y \in V(C_k)$ incident to a cut-edge in $G[B]$. But this contradicts that $G$ is 3-edge-connected. And so $C_k$ cannot be positive, a contradiction. This completes the proof of part one, and the lemma.     
\end{proof}

\section{Forming a ${\mathbb Z}_3$-preflow}\label{3ecZ3pre}

As in the parallel section in \cite{devos2023nowherezero}, we will begin our construction of a nowhere-zero 8-flow with a $\Z_3$-preflow. For a subcubic bidirected graph $G = (V,E)$ and abelian group $A$, a function $\phi : E \rightarrow A$ is a \emph{preflow}\index{preflow} when $\partial \phi (v) \neq 0$ if and only if $\mathrm{deg}(v) \in \{1,2\}$. We say $\partial\phi(v)$ is the \emph{boundary} of $\phi$ at $v$.

We will require the following lemma from \cite{devos2023nowherezero}.

\begin{lemma}[\cite{devos2023nowherezero}]
\label{cycle_boundary1}
Let $C$ be a cycle, and let $b: V(C) \rightarrow {\mathbb Z}_3$.  If $C$ is negative, or all edges of $C$ are positive and $\sum_{v \in V(C)} b(v) = 0$,
then there exists $\tau : E(C) \rightarrow {\mathbb Z}_3$ so that $\partial \tau = b$.
\end{lemma}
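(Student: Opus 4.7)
The plan is to interpret $\partial$ as a $\Z_3$-linear map $\Z_3^{E(C)} \to \Z_3^{V(C)}$ and prove that $b$ lies in its image. Since $|E(C)| = |V(C)|$, it suffices to compute what the system of boundary equations collapses to after iterative elimination, and in each case an explicit $\tau$ will present itself.

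First I would reduce to a canonical form. Reversing an edge $e$ (which is permitted by the discussion after the definition of flow) merely negates the $\tau(e)$ coordinate and so preserves the existence question; therefore we may reverse edges freely without altering the signature. Label the cycle as $v_0 v_1 \cdots v_{n-1} v_0$ with $e_i = v_i v_{i+1}$ (indices mod $n$), and orient every $e_i$ from $v_i$ to $v_{i+1}$. If $C$ is positive this is compatible with the signature for every edge; if $C$ is negative, an odd number of edges initially have both directions pointing the same way, and after reversing all but one and reindexing we may arrange that exactly one edge, say $e_0$, still has both of its directions pointing outward at $v_0$ and $v_1$.

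In the positive case the boundary conditions read $\tau(e_i) - \tau(e_{i-1}) = b(v_i)$ for each $i \in \{0,1,\dots,n-1\}$. Setting $\tau(e_0) = 0$ and $\tau(e_i) = \sum_{j=1}^{i} b(v_j)$ solves every equation, and the loop-closure condition at $v_0$ becomes precisely $\sum_{v \in V(C)} b(v) = 0$, which is the hypothesis. In the negative case the conditions at $v_0$ and $v_1$ read $\tau(e_0) - \tau(e_{n-1}) = b(v_0)$ and $\tau(e_0) + \tau(e_1) = b(v_1)$, while the remaining $n-2$ conditions retain the difference form. Eliminating $\tau(e_1), \dots, \tau(e_{n-1})$ iteratively collapses the whole system to the single scalar equation $2\,\tau(e_0) = \sum_{v \in V(C)} b(v)$. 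Since $2$ is a unit in $\Z_3$ (with inverse $2$), this uniquely determines $\tau(e_0)$ and hence every $\tau(e_i)$, with no restriction required on $b$.

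The only real obstacle is bookkeeping: under reversals one must track the induced sign change on $\tau$ so that the canonical form genuinely corresponds to the original problem. Conceptually, however, the lemma is a one-line statement about the boundary operator of a signed cycle over $\Z_3$: its image is the hyperplane $\{b : \sum b(v) = 0\}$ when $C$ is positive and is all of $\Z_3^{V(C)}$ when $C$ is negative, the latter because $2 \neq 0$ in $\Z_3$ forces the cycle's flow space to be trivial rather than one-dimensional.
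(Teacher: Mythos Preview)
The paper does not prove this lemma; it is quoted from \cite{devos2023nowherezero} and used as a black box, so there is no in-paper argument to compare against. Your abstract summary in the final paragraph is a complete and correct proof: the boundary map $\partial:\Z_3^{E(C)}\to\Z_3^{V(C)}$ is square, its kernel is the $\Z_3$-flow space of $C$, and for a negative cycle any flow $\tau$ must satisfy $\tau=-\tau$ after one trip around, forcing $\tau=0$ since $2\in\Z_3^\times$. Surjectivity follows. In the all-positive case the image is visibly contained in the zero-sum hyperplane and has codimension at most one, so it equals that hyperplane.

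There is, however, a slip in your explicit construction for the negative case. You write that ``after reversing all but one'' of the negative edges you may arrange for exactly one edge to have both half-arrows pointing the same way. But as you yourself note two sentences earlier, reversing an edge does not alter the signature: a negative edge stays negative. What does reduce the number of negative edges is \emph{switching} at vertices, which is also flow-preserving (it negates $b$ at the switched vertex, but since you are proving surjectivity in the negative case this is harmless). With switching you can indeed arrange for exactly one negative edge, after which your elimination to $2\tau(e_0)=\sum_v b(v)$ goes through verbatim. Alternatively, you can skip the reduction entirely and run the elimination with an arbitrary odd number of negative edges: writing $\gamma_l=+1$ or $-1$ according as $e_l$ is positive or negative, the closure condition becomes $(1-\prod_l\gamma_l)\,\tau(e_0)=(\text{linear form in }b)$, and $\prod_l\gamma_l=-1$ again gives a coefficient of $2$. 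Either way the gap is purely terminological, and your rank argument already suffices on its own.
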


The following lemma is similar to the parallel lemma in \cite{devos2023nowherezero}; the main differences are that here there is a fish-type `cycle', and positive cycles are no longer guaranteed to have two edges to cycles before them in the list $\mathcal{C}$. 

We say the unique edge in fish that is not in a 2-edge cut is \emph{distinguished} (as shown in Figure \ref{fig:fishFlow}).

\begin{lemma}\label{lem:buildPreflow}
        Let $G$ be a signed 3-edge-connected cubic graph with two disjoint negative cycles, and let $\mathcal{C} = C_1, \dots, C_t$ be the list of cycles returned when $G$ is input to the Cycle Selection Algorithm.  Let $G^*$ be an orientation of the graph obtained from $G$ by subdividing one edge on every negative special and even-length negative ordinary cycle in $\mathcal{C}$.  Then there exists a preflow $\phi: E(G^*) \rightarrow \mathbb{Z}_3$ so that $\phi(e) = 0$ only when $e$ lies in (a subdivision of) a negative special cycle, a positive cycle, or $e$ is the distinguished edge in fish. 
\end{lemma}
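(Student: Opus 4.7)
My plan is to construct $\phi$ inductively, processing the cycles $C_1, \ldots, C_t$ in the order returned by the Cycle Selection Algorithm. Fix a bidirection of $G^*$ at the outset. At step $i$, I will have $\phi$ defined on all edges within $\bigcup_{j<i} V(C_j)$ together with all chord edges from that set to $V(C_i)$ (these ``backward chords'' of $C_i$ having been assigned at earlier steps as forward chords of earlier cycles). At step $i$ we then assign values to the forward chord edges of $C_i$ (edges to $\bigcup_{j>i} V(C_j)$), the internal chord edges of $C_i$ (both endpoints in $V(C_i)$ but not on the cycle), and the cycle edges of $C_i$ (including those arising from the subdivision, when applicable). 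Chord edges will receive nonzero values in $\{1,2\} \subset \Z_3$; for each $v \in V(C_i)$ the chord value at $v$ then determines the required $b(v) \in \Z_3$ so that the preflow condition $\partial\phi(v) = 0$ holds at $v$, while at a subdivision vertex $w$ on $C_i$ we are free to pick $b(w) \in \{1,2\}$. Finally, we invoke Lemma~\ref{cycle_boundary1} on $C_i$ to obtain cycle-edge values $\tau$ with $\partial\tau = b$, which succeeds provided $\sum_{v \in V(C_i)} b(v) = 0$ when $C_i$ is positive, and is automatic when $C_i$ is negative.

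The two main subtasks inside this framework are (i) arranging $\sum_{v \in V(C_i)} b(v) = 0$ on every positive cycle, and (ii) making $\tau$ nowhere-zero on every negative ordinary cycle. For (i), each internal chord contributes $0$ to the sum (once with each sign), so only backward chord values (already fixed) and forward chord values (free in $\{1,2\}$) need to balance; by Lemma~\ref{lem:positive} each positive cycle has either two backward chords (so two $\{1,2\}$-choices for forward/internal values cover any target sum in $\Z_3$) or one backward chord and a forward edge that is a cut-edge in $\bigcup_{j \ge i} V(C_j)$, in which case we coordinate the cut-edge's value using the cut structure of the later cycles. For (ii), Lemma~\ref{lem:ordinary} guarantees at most one forward chord on a negative ordinary cycle, but that forward chord value, together with any internal chord values, the choice of which edge to subdivide in the even-length case, and the free nonzero boundary value $b(w)$ at the subdivision vertex, provides enough $\Z_3$-flexibility to steer the unique $\tau$ away from $0$ on every cycle edge.

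The fish case, which by Lemma~\ref{lem:cycSelFish} occurs only when $C_t$ itself is a fish, I would handle by a direct construction: set $\phi$ to $0$ on the distinguished middle edge of the $3$-branch of the underlying theta, reducing the fish minus this edge to a theta with branches of lengths $2, 2$, and an odd integer $\ge 5$ (with the single negative edge on the long branch), and then propagate nonzero $\Z_3$-values from the chord inputs at the degree-$2$ fish vertices through the three branches while satisfying the flow conditions at the four degree-$3$ fish vertices. The main obstacle I anticipate is subtask (ii): because Lemma~\ref{cycle_boundary1} gives a \emph{unique} $\tau$ on a negative cycle once $b$ is chosen, there is no post-hoc freedom, and all chord-value and subdivision-edge freedoms must be exploited together to avoid zeros on every cycle edge. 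A secondary obstacle is Case 2 of Lemma~\ref{lem:positive}, where the positive-cycle sum constraint must be propagated globally through a forward cut-edge whose value is coordinated with the later subgraph via its cut structure.
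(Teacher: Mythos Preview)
Your processing direction is reversed from what is needed, and this creates a genuine gap. The paper builds $\phi$ by processing the cycles in \emph{reverse} order, from $C_t$ down to $C_1$: at step $k$ the edges $F_k = E(V(C_k^*), \bigcup_{i<k} V(C_i^*))$ are still unassigned and hence free, while the edges $E_k = E(V(C_k^*), \bigcup_{i>k} V(C_i^*))$ have already been fixed. This direction is essential for the negative ordinary case. By Lemma~\ref{lem:ordinary} a negative ordinary cycle $C_k$ is induced and has $|E_k| \le 1$, so in the paper's order nearly all of its chord edges are free. One then resigns so that every edge of $C_k^*$ is negative, orients so that every vertex of $C_k^*$ is a source, and sets $\phi \equiv \pm 1$ on $E(C_k^*)$; this is automatically nowhere-zero, the sign is chosen to zero the boundary at the unique $E_k$-vertex (if any), and the free $F_k$ edges then absorb the boundary at every remaining degree-$3$ vertex.

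In your forward order the roles of $E_k$ and $F_k$ swap: the many chords to earlier cycles are already fixed, leaving at most one free chord (plus possibly the subdivision-vertex boundary). That is not enough. Concretely, an odd-length negative ordinary cycle that is an entire component of $G'$ has \emph{no} forward chord; all $b(v)$ are then forced by earlier steps, and the unique $\tau$ with $\partial\tau = b$ can certainly have zeros (e.g.\ on an all-negative $5$-cycle with every vertex a source, boundary targets $(2,2,2,2,1)$ force $\tau = (2,0,2,0,2)$). You have also misread Lemma~\ref{lem:positive}: the guaranteed edges there go to \emph{earlier} cycles, so in your order they are the fixed backward chords, not free ones; there is no lower bound on the number of forward chords of a positive cycle, and your balancing argument for $\sum_v b(v) = 0$ need not go through. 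Likewise, the paper's handling of the $|F_k|=1$ positive case relies on the cut-edge of Lemma~\ref{lem:positive} separating off an already-processed component of $G'$, on which a global sign flip is available; in your order that component has not yet been built.
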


\begin{proof}
    For every $C_i \in \mathcal{C}$ that is a cycle in $G$, let $C_i^*$ be the corresponding cycle in $G^*$ which is obtained from $C_i$ by at most one subdivision. Denote the corresponding list of these cycles by $\mathcal{C}^*$. We will use the same classification: positive, negative ordinary, negative special, or fish for both $C_i$ and $C_i^*$. 

    For $1 \le k \le t$ let $U_k =  \bigcup_{i=k+1}^t V(C_i^*)$ and $D_k = \bigcup_{i=1}^{k-1}V(C_i^*)$ ($U$ for up and $D$ for down). Let $F_k$ denote $E(V(C_k^*), D_k)$ and $E_k$ denote $E(V(C_k^*), U_k)$.

    Before we start, we choose a signature of $G^*$ obeying certain qualities on the cycles $C_k^* \in \mathcal{C^*}$. Note that by switching only on the vertices of $C_k^*$, we may change the sign of the edges in $E(C_k^*)$ without affecting the sign of the edges of any other cycle in $\mathcal{C^*}$. First, if $C_k^*$ is positive, we choose a signature so that every edge of $E(C_k^*)$ is positive. Similarly, if $C_k^*$ is negative ordinary, because our subdivisions have ensured that $C_k^*$ has odd length, then we choose a signature so that every edge in $E(C_k^*)$ is negative. We make no restriction on the edges of $C_k^*$ if it is negative special, but if $C_k^*$ is fish then we choose a signature with exactly one negative edge (as in Figure \ref{fig:fish}).

    We will construct a preflow $\phi: E(G^*) \to \Z_3$. Begin with $\phi$ having empty domain, and we will extend the domain of $\phi$ in steps, starting with the edges incident to a vertex in $V(C_t^*)$, then $V(C_{t-1}^*)$ and proceeding backwards through the list $\mathcal{C^*}$ finishing with the edges incident to a vertex in $V(C_1^*)$ at which point the domain of $\phi$ will be $E(G^*)$. For $i = t \dots 1$, at each step $i$ we will ensure that 
    \begin{itemize}
        \item every $v \in V(C_i^*) \cup U_i$ satisfies the boundary requirements of a preflow (that is $\partial\phi(v) =~0$ if and only if the degree of $v$ is three), and
        \item for each edge $e$ with both ends in $[V(C_i^*) \cup U_i]$, $\phi(e) = 0$ only when $e$ lies in a negative special cycle, a positive cycle, or is the distinguished edge in fish.
    \end{itemize}
    At each step $i$ we proceed according to the type of $C_i^* \in G^*$.

    \medskip

    \noindent{Case 1: $C_k^*$ is fish.}

    \medskip

    By Lemma \ref{lem:cycSelFish}, $k=t$ and $C_k^*$ is the first graph to be processed. Without loss of generality, let $C_k^*$ be partially oriented as in Figure \ref{fig:fishFlow} (the orientation of the distinguished edge is immaterial). For an edge $e \in C_k^*$, set $\phi(e) = 0$ if $e$ is the distinguished edge, and $\phi(e) = 1$ otherwise. Because $k=t$, it follows that $E_k = \emptyset$. Finally for each $f \in F_k$ we may choose $\phi(f) = \pm 1$ so its end in $V(C_k^*)$ has boundary zero. This satisfies the requirements of a preflow because every vertex $v \in C_k^*$ has boundary $\partial\phi= 0$.

    \begin{figure}[h]
        \centering
        \includegraphics[scale = 1.5]{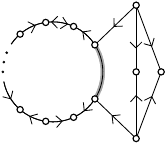}
        \caption{A fish graph with partial orientation. The distinguished edge is unoriented and highlighted in grey. The signature has exactly one negative edge (with two opposite-pointing arrows).}
        \label{fig:fishFlow}
\end{figure}

    \medskip

    \noindent{Case 2: $C_k^*$ is negative ordinary}

    \medskip

    By Lemma \ref{lem:ordinary}, $C_k^*$ has no chords. Because every edge $e \in E(C_k^*)$ is negative, we may assume, by possibly reversing orientation of some edges, that every vertex is a source in $C_k^*$. We set $\phi(C_k^*) = \pm 1$, assigning the same value to each edge and choosing so that (by Lemma \ref{lem:ordinary}) the at most one vertex $w \in V(C_k^*)$ incident to an edge in $E_k$ has zero boundary (if it exists). Now we assign a value $\phi(f) = \pm 1$ to each edge $f \in F_k$ so that the vertices of degree three on $C_k^*$ have zero boundary. This leaves the vertex of degree two (if it exists) with non-zero boundary as desired.

\medskip

\noindent{Case 3: } $C_k^*$ is negative special

\medskip

For every edge $f$ that is either a chord of $C_k^*$ or in $F_k$, set $\phi(f) = 1$. Now by Lemma \ref{cycle_boundary1}, we may choose values $\phi(e) \in \{0,\pm 1\}$ for the edges in $V(C_k^*)$ so that each vertex of degree three in $V(C_k^*)$ has zero boundary and the unique vertex of degree two has non-zero boundary.

\medskip

\noindent{Case 4: $C_k^*$ is positive}

\medskip

We set $\phi(e) = 1$ for every chord edge $e$ of $C_k^*$. We proceed in two subcases.

If $|F_k| \ge 2$, then we proceed as in the analogous case in \cite{devos2023nowherezero}. That is, for each $f \in F(C_k)$ we choose $\phi(f) = \pm 1$ so that the sum of the boundary of $\phi$ on the vertices of $C_k^*$ is zero. This means that by Lemma \ref{cycle_boundary1} we may set $\phi(e) \in \{0,\pm 1\}$ for every edge $e \in E(C_k^*)$ so that $\phi$ has zero boundary at every vertex in $V(C_k^*)$.

Otherwise by Lemma \ref{lem:positive}, it must be that $|F_k| = 1$ and there exists $y \in V(C_k^*)$ so that $y$ is incident to an edge $e$ in $E_k$ and $e$ is a cut-edge in the graph $H'=G[U_k \cup V(C_k^*)]$. Let $H$ be the component of $H' - y$ containing the other end of $e$. Let $f$ be the (unique) edge in $F_k$. 

\begin{figure}[h]
    \centering
    \includegraphics[width=0.5\linewidth]{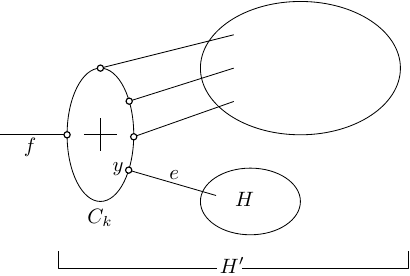}
    \caption{A positive cycle $C_k$ with $|F_k| = 1$.}
    \label{fig:pos}
\end{figure}

We must extend $\phi$ to $E(C_k^*) \cup \{f\}$. If $s = \sum_{v \in V(C_k^*)}\partial\phi(v) \neq 0$, then we may choose $\phi(f) = \pm 1$ to make the sum of $\partial \phi$ on the all the vertices of $C_k^*$ equal to zero. If $s=0$, we must first modify the preflow on the edges with an end in $V(H)$: for every edge $d$ in the domain of $\phi$, let $\phi'(d) = -\phi(d)$ if $d$ has an end in $V(H)$ and $\phi'(d) = \phi(d)$ otherwise. Then
$\partial \phi' (v) = -\partial \phi(v)$ for all vertices $v \in V(H) \cup \{y\}$, and $\partial \phi' (v)= \partial \phi(v)$ for the remaining vertices in $V(H')$.
This means $\phi'$ still satisfies the requirements of the preflow up to this step, but in particular $\partial \phi' (y) = -\partial \phi(y)$ (which is non-zero) while the boundaries of the remaining vertices in $V(C_k)$ haven't changed. This means $s' = \sum_{v \in V(C_k^*)}\partial\phi'(v) \neq 0$. But now letting $\phi = \phi'$ puts us in the $s \neq 0$ case and we choose $\phi(f) = \pm 1$ to make the sum of $\partial \phi$ on the all the vertices in $C_k^*$ equal to zero. Now by Lemma \ref{cycle_boundary1} we set $\phi(e) \in \{0,\pm 1\}$ on the edges of $C_k^*$ so that $\phi$ has zero boundary at every vertex in $V(C_k^*)$.
\end{proof}

\section{Proof of the main theorem}\label{3ecmain}

In this section we put together our lemmas to complete the proof of our main theorem. We follow the method set out in \cite{devos2023nowherezero}, and use machinery from there. We also call on the following theorem of Lu, Luo, and Zhang \cite{LLZ}.

\begin{theorem}[Lu et. al.]\label{thm:no2disjNeg}
If $G$ is a flow-admissible signed graph with no two edge-disjoint negative cycles, then $G$ has a nowhere-zero $6$-flow.
\end{theorem}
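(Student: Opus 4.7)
The plan is to exploit the strong structural constraint imposed by the hypothesis and reduce the problem to Seymour's 6-flow theorem for ordinary graphs. The approach has two major components: a structural lemma isolating all the negativity into a bounded edge set, and a flow-extension argument on top of a $\Z_6$-flow produced by Seymour.

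For the structural lemma, I would establish that any signed graph $G$ with no two edge-disjoint negative cycles admits, after an equivalent choice of signature, a small edge set $T \subseteq E(G)$ of constant size such that $G \setminus T$ is balanced. This is an Erd\H{o}s--P\'osa-type packing/covering duality for the family of negative cycles: a minimum edge-transversal of the negative cycles has bounded size, because otherwise a shifting/augmenting argument within the transversal would produce two edge-disjoint negative cycles, contradicting the hypothesis. I would prove this lemma by induction on $|E(G)|$, using reductions that contract balanced cycles, suppress degree-2 vertices, and exploit 1- and 2-edge-cuts. A finer analysis of the resulting $T$ (e.g.\ whether its edges are parallel, share a vertex, or form a short path) would be needed to control the flow construction in the next step.

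With the structural lemma in hand, switch the signature so that every edge outside $T$ is positive. The underlying unsigned graph of $G$ inherits flow-admissibility from the signed flow-admissibility of $G$ (any bridge of the underlying graph would also be a flow-obstruction in the signed graph), so Seymour's 6-flow theorem yields a nowhere-zero $\Z_6$-flow $\phi$ of the underlying graph. This $\phi$ automatically satisfies the signed flow condition at every vertex not incident to an edge of $T$; at the bounded number of vertices touching $T$, there is a boundary defect from the sign flips. To correct the defect, I would modify $\phi$ by adding characteristic flows of carefully chosen cycles in $G$, using the freedom that Seymour's theorem gives only a flow, not a unique flow. Because $|T|$ is bounded and $G$ is flow-admissible, the required cycles exist and the correction can be made while preserving the nowhere-zero property.

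The main obstacle is the structural lemma itself: proving a sharp enough bound on $|T|$ that the downstream flow correction remains tractable. A secondary difficulty is the correction step: after shifting $\phi$ to fix defects at vertices incident to $T$, one must ensure no edge value becomes zero, which likely requires a careful case analysis on $G[T]$ and the way $T$ attaches to the balanced portion $G \setminus T$. If the structural lemma can be proved with $|T| \le 2$, both cases (one or two negative edges after switching) should reduce to standard manipulations of Seymour's flow, yielding the desired nowhere-zero 6-flow.
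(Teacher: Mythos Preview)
First, note that the paper does not prove this theorem at all: it is quoted as a result of Lu, Luo, and Zhang \cite{LLZ} and used as a black box in the proof of Theorem~\ref{mainthm}. So there is no argument in the paper to compare your proposal against; what follows is an assessment of your sketch on its own terms.

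Your outline has a genuine gap in two places. The first is the assertion that the underlying unsigned graph inherits flow-admissibility because ``any bridge of the underlying graph would also be a flow-obstruction in the signed graph.'' That is false in general: a bridge joining two unbalanced components can carry nonzero flow in a signed graph (a barbell is a signed circuit). Under the hypothesis of no two edge-disjoint negative cycles this configuration is excluded, since the two components would supply edge-disjoint negative cycles, but you must invoke the hypothesis explicitly rather than appeal to signed flow-admissibility alone. The second and more serious gap is the correction step. Your structural lemma cannot always achieve $|T|\le 1$: take $K_4$ with two non-adjacent edges negative. This graph is $2$-unbalanced (hence flow-admissible), and its four negative cycles are the four triangles, which pairwise share an edge but have no common edge, so every equivalent signature has at least two negative edges. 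With $|T|=2$ and all four vertices incident to $T$, ``adding characteristic flows of carefully chosen cycles'' to repair the boundary defects of a Seymour $\mathbb{Z}_6$-flow has no slack: any cycle you push along changes several edge values simultaneously, and you give no mechanism to guarantee none becomes zero. A complete proof along these lines would need a much sharper structural decomposition (controlling how $T$ sits inside $G$) together with a flow construction tailored to that structure, not a post-hoc patch of an unsigned flow.
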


Next, we state two lemmas and a definition from \cite{devos2023nowherezero}, before we conclude with the proof of our main theorem.



\begin{lemma}[\cite{devos2023nowherezero}]
\label{perfect_matching}
Let $G$ be a subdivision of a 3-connected cubic graph, let $x_1, \ldots, x_k \in V_2(G)$, and for $1 \le i \le k$ let $C_i$ be a cycle of odd length in $G$ with $x_i \in V(C_i)$.  If the following conditions are satisfied:
\begin{itemize}
\item $|V_2(G)|$ is even,
\item $V(C_i) \cap V(C_j) = \emptyset$ whenever $i \neq j$,
\item $|V_2(G) \setminus \{x_1, \ldots, x_k\}| < 6$,
\item If $C_i$ straddles an edge cut $S$, then $|S| \neq 3$, and
\item If $C_i$ and $C_j$ straddle an edge cut $S$, and $i \neq j$, then $|S| \neq 4$,
\end{itemize}
then $G$ contains a perfect matching.
\end{lemma}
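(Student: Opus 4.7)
The plan is to proceed by contradiction using Tutte's perfect matching theorem. First I would verify that $|V(G)|$ is even: the degree-$3$ vertices of $G$ are exactly the vertices of the underlying $3$-connected cubic graph $H$, and $|V(H)|$ is even by the handshake lemma, so combined with the hypothesis that $|V_2(G)|$ is even we get that $|V(G)|$ is even. Hence if $G$ has no perfect matching, there exists $S \subseteq V(G)$ with $o(G-S) \geq |S|+2$. Let $K_1,\ldots,K_m$ be the odd components of $G-S$ and let $|E_j|$ denote the number of edges from $K_j$ to $S$; since $G$ is subcubic we get $\sum_j |E_j| \leq 3|S|$.

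Next I would leverage the subdivision structure. Since $G$ is a subdivision of a $3$-edge-connected graph, $G$ is itself $2$-edge-connected, so $|E_j| \geq 2$; moreover any $2$-edge cut in $G$ must have both its edges on the same subdivided edge of $H$ (otherwise it would project to a $2$-edge cut of $H$), so $|E_j|=2$ forces $K_j$ to consist entirely of subdivision vertices arranged in a path. Combining $|E_j| \geq 2$, $\sum_j |E_j| \leq 3|S|$, and $m \geq |S|+2$, the count $m_2 := |\{j : |E_j|=2\}|$ of ``bad'' components satisfies $3m-m_2 \leq 3|S|$, hence $m_2 \geq 6$. Each such $K_j$ contributes at least one degree-$2$ vertex, and since $|V_2(G) \setminus \{x_1,\ldots,x_k\}| < 6$, by pigeonhole at least one of these $K_j$ must contain some marked vertex $x_i$.

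The main obstacle is now to extract a contradiction from the odd cycle $C_i$ through $x_i$. Let $s_1,s_2 \in S$ be the two neighbors of $K_j$ across $E_j$. Because $C_i$ is a cycle through $x_i \in K_j$ whose only exits are the two edges of $E_j$, the cycle $C_i$ must use both. I would then extend $K_j$ along its subdivided edge (absorbing any degree-$2$ vertices of $S$ on the $s_1$-side) until reaching a degree-$3$ vertex $u$; the cut $\delta(K_j \cup \{s_1,\ldots,u\})$ then has exactly $3$ edges, and $C_i$ must straddle it. Indeed, an edge of the $K_j$-path (or the edge $x_i s_1$ if $K_j$ is a singleton) lies inside, while $C_i$'s continuation past $u$ back around to $s_2$ must contain an edge entirely outside the enlarged set, because otherwise one would obtain a $2$-edge cut of $G$ isolating a set containing the degree-$3$ vertex $u$, contradicting the structural description of $2$-edge cuts. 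This straddling contradicts the first straddling hypothesis. If instead the pigeonhole yields two bad components $K_j, K_{j'}$ containing distinct marked vertices $x_i,x_{i'}$, then an analogous construction produces a $4$-edge cut straddled by both $C_i$ and $C_{i'}$, contradicting the second straddling hypothesis. The delicate part throughout is verifying the straddling (i.e.\ ruling out the degenerate short-walk configurations), which is where the subdivision structure of small cuts is used most essentially.
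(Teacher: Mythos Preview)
The present paper does not prove this lemma; it is quoted from the companion paper \cite{devos2023nowherezero} (as the citation in the lemma heading indicates), so there is no in-paper argument to compare against directly.

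Your Tutte--Berge approach is sound and would constitute a valid proof. A few remarks on the details.

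First, your claim that $|E_j|=2$ forces $K_j$ to be a path of subdivision vertices needs one more line. Your projection argument shows that one side of any $2$-edge cut contains no branch vertex, but a priori that side could be $V(G)\setminus K_j$ rather than $K_j$. If it were, then $S\subseteq V_2(G)$, whence $\sum_j |E_j|\le 2|S|$; combined with $|E_j|\ge 2$ for all $j$ this gives $m\le |S|$, contradicting $m\ge |S|+2$. So indeed every such $K_j$ is a path of subdivision vertices on a single subdivided edge.

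Second, the straddling verification is correct, but your stated reason (a phantom $2$-edge cut isolating $u$) is not the actual mechanism. The clean argument is that $C_i$, being forced by the degree-$2$ vertices along the entire subdivided $uv$-edge, passes through \emph{both} branch vertices $u$ and $v$. In particular $v\notin X$, and the edge by which $C_i$ leaves $v$ lies on a different subdivided edge of $H$ (here one uses that $H$ is simple, which follows from $3$-connectivity on at least four vertices), hence has both ends outside $X$. Together with any edge of the $u$--$K_j$ segment inside $X$, this gives the straddling.

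Third, the $4$-edge-cut alternative you sketch at the end is never needed: once a single $x_i$ lands in some $K_j$ with $|E_j|=2$, the $3$-edge-cut contradiction already fires, and your pigeonhole shows this always happens.
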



\begin{definition}[\cite{devos2023nowherezero}]\label{def:auxGraph}
    Let $\phi : E(G) \rightarrow \Z_3$ be a pre-flow in an oriented signed graph $G$ with all vertices of degree 2 or 3, and assume that $Z := \{ e \in E(G) \mid \phi(e) = 0 \}$ is a matching. We will show how to construct \textbf{\emph{the auxiliary graph associated with $G$ and $\phi$}}.
\end{definition}

Without loss of generality,  we may assume that every edge in $Z$ is positive, and $\phi(E(G)) \subseteq \{0,1\}$.  First, modify $G$ to form a new bidirected graph $G^+$ by adding a single new vertex $w$ and then for every vertex $v \in V_2(G)$ we add a positive edge with ends $v,w$.  We may extend the function $\phi$ to a function $\phi^+$ with domain $E(G^+)$ by assigning each newly added edge $e$ a suitable orientation and setting $\phi^+(e) = 1$ so that $\partial \phi^+(v) = 0$ holds for every $v \in V(G)$.  Let us comment that $w$ has only been added to make the graph (without $w$) cubic for the forthcoming modification and will be deleted later.  

\begin{figure}
    \centering
    \includegraphics[width=0.5\linewidth]{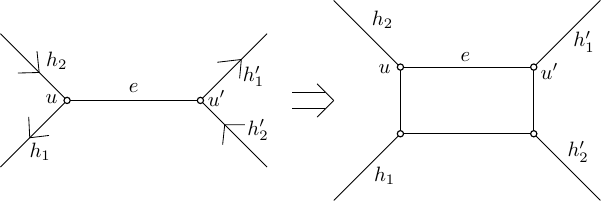}
    \caption{Auxiliary graph construction}
    \label{auxFig}
\end{figure}

Let $e = uu' \in E(G^+)$ have $\phi(e) = 0$. Note $w$ is not an end of $e$. Let $h_1, h_2$ and $h_1', h_2'$ be the other edges incident to $u$ and $u'$ respectively; without loss we may assume their nearest arrows to $u,u'$ are oriented as in the left side of Figure \ref{auxFig}. Modify $G^+$ by subdividing $h_1$ and $h_2'$, and adding a new edge in between the two new vertices. Let $H$ be the graph (not signed or oriented) obtained by doing this process at every edge where $\phi$ is zero, and then deleting the vertex $w$.  We call $H$ the \emph{auxiliary} graph \emph{associated with} $G$ and $\phi$. 

\begin{lemma}[\cite{devos2023nowherezero}]\label{4preflow}
Let $G$ be an oriented signed graph with all vertices of degree 2 or 3, let $\phi : E(G) \rightarrow {\mathbb Z}_3$ be a preflow with the property that $Z = \{ e \in E(G) \mid \phi(e) =0 \}$ is a matching.  If the auxiliary graph associated with $G$ and $\phi$ has a perfect matching, then there exists an integer preflow $\psi : E(G) \rightarrow \{0,\pm1,\pm2,\pm3\}$ so that $\psi + 3 \Z = \phi$, and $\partial \psi (v) = \pm 1$ for every $v \in V_2(G)$.
\end{lemma}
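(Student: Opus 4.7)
The plan is to use the perfect matching $M$ of the auxiliary graph $H$ to decide, for each edge of $G$, which integer lift of $\phi$ to select. After the paper's normalization, $\phi$ takes values in $\{0,1\}$ and $Z$ is a matching of positive edges, so any admissible $\psi$ has the form $\psi(e)=\phi(e)+3\epsilon(e)$ with $\epsilon(e)\in\{0,-1\}$ on non-zero edges and $\epsilon(e)\in\{-1,0,1\}$ on zero edges. A short residue computation shows that $\partial\phi(v)\in\{-3,0,3\}$ at every degree-$3$ vertex and $\partial\phi(v)\in\{-2,-1,1,2\}$ at every degree-$2$ vertex, so the requirement on $\psi$ translates to: $3\,\partial\epsilon(v)=-\partial\phi(v)$ at degree-$3$ vertices and $3\,\partial\epsilon(v)\in\{-1-\partial\phi(v),\ 1-\partial\phi(v)\}$ at degree-$2$ vertices. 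At the latter, exactly one of the two options is an integer (since $\partial\phi(v)\not\equiv 0\pmod 3$), which pins down the sign of $\partial\psi(v)$.

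Next I would read $\epsilon$ off $M$. For each non-zero edge $f \in E(G)$, set $\epsilon(f)=-1$ if $f\in M$ and $\epsilon(f)=0$ otherwise; this is well-defined globally because whether $f\in M$ is a single piece of information that both endpoints of $f$ see. For each zero edge $e=uu'$ there are two cases: if $M$ contains the central gadget edge $v_e^1 v_e^2$, set $\epsilon(e)=0$; otherwise $M$ matches $v_e^1$ to one of its two remaining neighbours and likewise $v_e^2$, and the four resulting local pictures map to $\epsilon(e)\in\{-1,+1\}$ together with a prescribed $\pm 3$ correction on each of the half-subdivisions of $h_1$ and $h_2'$, determined by the orientations set up in Figure~\ref{auxFig}. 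Because $Z$ is a matching, the gadgets are vertex-disjoint and these local choices never conflict.

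Finally I would verify the boundary at each original vertex $v$. Exactly one edge incident to $v$ in $H$ lies in $M$; when $v$ is incident to no zero edge the matched edge is one of the three original edges, and the flip $1\mapsto -2$ on it adjusts $\partial\phi(v)$ by $\pm 3$, which cancels $\partial\phi(v)\in\{\pm 3,0\}$ after an appropriate orientation choice. When $v$ is an endpoint of a zero edge, the matched edge at $v$ is either the zero edge itself or one of the two half-subdivisions entering its gadget, and the case table from the previous paragraph is designed so that the gadget's contribution to $\epsilon(e)$ plus the contribution of the half-edges combines with the non-zero edges at $v$ to kill $\partial\phi(v)$. For a degree-$2$ vertex $v$, the same check applies in $G^+$ (with the auxiliary edge $vw$ contributing $\pm 1$); deleting $w$ removes this unit of flow, leaving $\partial\psi(v)=\pm 1$ with the forced sign from the first paragraph. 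The main obstacle is simply the gadget bookkeeping -- checking that the four-case table gives a consistent value of $\epsilon(e)$ and the correct contribution to $\partial\epsilon$ at both $u$ and $u'$ simultaneously -- which is precisely what the gadget in Figure~\ref{auxFig} was constructed to achieve.
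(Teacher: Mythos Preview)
This lemma is not proved in the present paper: it is quoted verbatim from \cite{devos2023nowherezero} and invoked as a black box in the proof of Theorem~\ref{mainthm}. So there is no proof here to compare your proposal against.

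That said, your outline is the right shape --- lift $\phi$ to an integer function $\psi=\phi+3\epsilon$ and read $\epsilon$ off the perfect matching $M$ --- and your residue computation at degree-$2$ and degree-$3$ vertices is correct. However, two points deserve care. First, your rule ``$\epsilon(f)=-1$ if $f\in M$'' is not well-posed for every non-zero edge $f$ of $G$: the edges $h_1$ and $h_2'$ adjacent to a zero edge are \emph{subdivided} in the auxiliary graph $H$, so $f$ corresponds to two edges of $H$ and ``$f\in M$'' is ambiguous. You need to decide $\epsilon(h_1)$ and $\epsilon(h_2')$ from the matching status of their halves together with the gadget edge, and check that this choice is consistent with the boundary condition at the \emph{far} endpoints of $h_1$ and $h_2'$ (which may themselves sit next to a different gadget, since $Z$ being a matching does not prevent a non-zero edge from being adjacent to two zero edges, one at each end). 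Second, you explicitly defer ``the gadget bookkeeping'' --- but that case analysis \emph{is} the proof: one must verify that each of the local matching patterns at the pair of subdivision vertices determines values of $\epsilon$ on $e,h_1,h_2'$ that simultaneously zero the boundary at both $u$ and $u'$. Until that table is written out and checked, the argument is a plan rather than a proof.
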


\medskip
\noindent Now we may conclude with our proof of the main result.

\begin{proof}[Proof of Theorem \ref{mainthm}]
Suppose for contradiction that the theorem is false. Let $G$ be a flow-admissible 3-edge-connected oriented signed graph which is a counterexample to the theorem and for which $3|E(G)| - 4|V(G)|$ is minimum. Then by Lemma \ref{getcubic}, $G$ is cubic; and  
by Observation~\ref{obs:well-behaved}, every proper induced subgraph of $G$ is well-behaved. Since $G$ is cubic, if $G$ does not have two vertex-disjoint negative cycles, then by Theorem \ref{thm:no2disjNeg} $G$ has a nowhere-zero 6-flow, which contradicts that $G$ is a counterexample. Thus it must be that $G$ has two vertex-disjoint negative cycles. 

Now, input $G$ to the Cycle Selection Algorithm, and let $\mathcal{C} = C_1, \dots, C_t$ be the list of cycles returned.
Modify $G$ to form $G^*$ by subdividing one edge from each $C_i$ that is either negative special or has even-length and is negative ordinary. Let $\mathcal{C}^* = C_1^*, \ldots, C_t^*$ be the list of cycles in $G^*$ corresponding to $\mathcal{C}$, and let the cycles in $\mathcal{C}^*$ inherit the type of their corresponding cycle. By Lemma \ref{lem:buildPreflow}, there is a preflow $\phi : E(G^*) \rightarrow {\mathbb Z}_3$ so that $\phi(e) = 0$ only when $e$ lies in a negative special cycle, a positive cycle, or $e$ is the distinguished edge in fish. Note that this means every vertex in $V(G^*)$ is incident to at least one edge $e$ with $\phi(e) \neq 0$. Because $G^*$ is subcubic, and by the boundary requirements of a preflow, the set $Z \in E(G^*)$ of edges $e$ with $\phi(e) = 0$ is therefore a matching. 

Let $H$ be the auxiliary graph associated with $G^*$ and $\phi$ (see Definition \ref{def:auxGraph}). We will show that each negative ordinary cycle in $C^*$ has a corresponding cycle in $H$ of odd-length. Let $e \in E(G^*)$ be an edge in a negative ordinary cycle in $C^*$. The structure of $phi$ means $e$ is not adjacent to an edge in $Z$. But every edge which is subdivided in the construction of $H$ is adjacent to an edge in $Z$. Hence it follows that $e$ is not subdivided in the construction of $H$. Thus every negative ordinary cycle in $C^*$ appears unaltered in $H$. Because the negative ordinary cycles in $G^*$ have odd-length, it follows that each has a corresponding cycle in $H$ which also has odd-length. We are particularly concerned with those cycles in $H$ which correspond to negative ordinary cycles of even-length in $\mathcal{C}$ (because they are subdivided in $C^*$ and so each contains a vertex of degree-two in $H$). Denote the set of those cycles by $\mathcal{C}_H$.

Let $\mathcal{N}$ be the set of all but the last two even-length negative ordinary cycles in $\mathcal{C}$, let $\mathcal{N}^*$ be the set of corresponding cycles in $G^*$, and let $\mathcal{N}_H$ be the set of corresponding cycles in $H$ (so $\mathcal{N}_H$ is all but the last two cycles in $\mathcal{C}_H$). By Lemma \ref{lem:negCycStraddle}, no cycle in $\mathcal{N}$ straddles a 3-edge-cut in $G$, and no two of these cycles straddle a $4$-edge-cut in $G$. By construction, it is straightforward to see that the same is true for $\mathcal{N^*}$ and $G^*$. Further, again by construction, the same is true for $H$ and $\mathcal{N}_H$: if there was an edge-cut $S$ of size 3 or 4 which violated this requirement, it would have to contain two of the edges in the 4-cycle of a gadget (see Figure \ref{auxFig}). But this is not possible because those edges are not in a cycle in $\mathcal{N}_H$, and all but at most one of the edges in $S$ are contained in some cycle in $\mathcal{N}_H$. 

Now, there are at most four degree-2 vertices in $H$ which are not contained in a cycle in $\mathcal{N}_H$ (the degree-2 vertices associated with the at most two negative special cycles in $\mathcal{C}$, and the degree-two vertices contained in a cycle in $\mathcal{C}_H$ but not $\mathcal{N}_H$). Also, by Lemma \ref{CYcSelLast} it follows that there are an even number of degree-two vertices in $H$. Hence, it follows from Lemma \ref{perfect_matching} that $H$ has a perfect matching. This means that by Lemma \ref{4preflow} (and the structure of $\phi$), there is a integer-preflow $\psi: E(G^*) \to \{0, \pm1, \pm2,\pm3\}$ so that $\psi(e) = 0$ only if $e$ lies on a positive cycle, a negative special cycle, or $e$ is the distinguished edge in fish; and $\partial \psi(v) = \pm 1$ for every $v \in V_2(G^*)$.

We construct $\Z$-preflow $\tau: E(G^*) \to \{0, \pm 1\} $ which will `match' $\phi$ at the vertices of degree-two (so that when $\phi, \tau$ are summed in a certain way, the result is a flow), and so that every edge in $E(G^*)$ is non-zero in at least one of $\phi, \tau$. 
Beginning with $\tau = 0$, we will modify $\tau$ so that it is nonzero on exactly the positive cycles, negative special cycles, negative ordinary cycles with a degree-two vertex, and a positive cycle in fish. For every $C_i^* \in \mathcal{C}^*$ that is positive, modify $\tau$ by assigning $\tau(e) = \pm 1$ for every $e \in E(C_i^*)$ so that $\partial \tau (v) = 0$ for every $v \in V(C_i^*)$. For every $C_i^* \in \mathcal{C}^*$ that is either negative special, or negative ordinary with a degree-two vertex, let $x_i$ be the degree-two vertex in $C_i^*$.  Modify $\tau$ by assigning $\tau(e) = \pm 1$ for every $e \in E(C_i^*)$ so that $\partial \tau(v) = 0$ for every $v \in V(C_i^*) \setminus \{x_i\}$ and so that $\partial \tau( x_i) = -2 \partial \psi(x_i)$.
Finally, if $C_t^* \in \mathcal{C}^*$ is type fish, let $C \subseteq C_t^*$ be a positive cycle containing the distinguished edge of $C_t^*$. Modify $\tau$ by assigning $\tau(e) = \pm 1$ for every $e \in E(C)$ so that $\partial \tau (v) = 0$ for every $v \in V(C)$. 

To conclude, notice that $2\psi + \tau$ is a nowhere-zero 8-flow in $G^*$. This gives a nowhere-zero $8$-flow in $G$, which contradicts that $G$ is a counterexample to the theorem and completes the proof.
\end{proof}

\bibliographystyle{acm}
\bibliography{bib}

\end{document}